\renewcommand\th@plain{\slshape}
\newtheoremstyle{plain}% <name>
 {2mm}%                 <space above>
 {2mm}%                         <space below>
 {\slshape}%              <body font>
 {}%                         <indent amount>
 {\bfseries}%                <theorem head font>
 {.}%                        <punctuation after theorem head>
 {.5em}%                     <space after theorem head>
 {}%                         <theorem head spec>
\theoremstyle{plain}
\newtheorem{theorem}{Theorem}[section]
\newtheorem{corollary}[theorem]{Corollary}
\newtheorem{lemma}[theorem]{Lemma}
\newtheorem{proposition}[theorem]{Proposition}
\newtheorem{claim}[theorem]{Claim}
\newtheorem*{claim*}{Claim}
\newtheorem{conjecture}[theorem]{Conjecture}
\newtheoremstyle{definition}% <name>
 {2mm}%                 <space above>
 {2mm}%                         <space below>
 {\normalfont}%              <body font>
 {}%                         <indent amount>
 {\bfseries}%                <theorem head font>
 {.}%                        <punctuation after theorem head>
 {.5em}%                     <space after theorem head>
 {}%                         <theorem head spec>
\theoremstyle{definition}
\newtheorem{definition}[theorem]{Definition}
\newtheorem{remark}[theorem]{Remark}
\newtheorem{example}[theorem]{Example}
\newtheorem*{acknowledgements}{Acknowledgements}
\crefname{section}{Section}{Sections}
\crefname{theorem}{Theorem}{Theorems}
\crefname{corollary}{Corollary}{Corollaries}
\crefname{lemma}{Lemma}{Lemmas}
\crefname{lemma}{Lemma}{Lemmas}
\crefname{proposition}{Proposition}{Propositions}
\crefname{claim}{Claim}{Claims}
\crefname{definition}{Definition}{Definitions}
\crefname{notation}{Notation}{Notations}
\crefname{problem}{Problem}{Problems}
\crefname{note}{Note}{Notes}
\crefname{remark}{Remark}{Remarks}
\crefname{example}{Example}{Examples}
\crefname{conjecture}{Conjecture}{Conjectures}
\crefname{question}{Question}{Questions}
\crefname{mainthm}{Theorem}{Theorems}
\crefname{mainprop}{Proposition}{Propositions}
\crefname{enumi}{}{}
\crefname{enumii}{}{}
\crefname{enumiii}{}{}
\numberwithin{equation}{section}
\def\Q{{\mathbb Q}}
\def\R{{\mathbb R}}
\def\Z{{\mathbb Z}}
\def\P{{\mathbb P}}
\def\A{{\mathbb A}}
\def\QQ{\overline{\mathbb Q}}
\def\O{{ \mathcal{O}}}
\def\m{{ \mathfrak{m}}}
\def\I{{ \mathcal{I}}}
\def\L{{ \mathcal{L}}}
\DeclareMathOperator{\pr}{pr}
\DeclareMathOperator{\Pic}{Pic}
\DeclareMathOperator{\Spec}{Spec}
\newcommand{\g}{\gamma}
\renewcommand{\d}{\delta}
\newcommand{\e}{\varepsilon}
\newcommand{\f}{\varphi}
\renewcommand{\l}{\lambda}
\renewcommand{\k}{\kappa}
\newcommand{\G}{\Gamma}
\renewcommand{\L}{\Lambda}
\begin{document}

	\title[]
	{Existence of arithmetic degrees for generic orbits and dynamical Lang-Siegel problem} 
	
	\author[Yohsuke Matsuzawa]{Yohsuke Matsuzawa}
	\address{Department of Mathematics, Graduate School of Science, Osaka Metropolitan University, 3-3-138, Sugimoto, Sumiyoshi, Osaka, 558-8585, Japan}
	\email{matsuzaway@omu.ac.jp}

	\begin{abstract} 
		We prove the existence of the arithmetic degree for dominant rational self-maps at any point whose orbit is generic.
		As a corollary, we prove the same existence for \'etale morphisms on quasi-projective varieties and any points on it.
		We apply the proof of this fact to dynamical Lang-Siegel problem. Namely, we prove that local height function associated with
		zero-dimensional subscheme grows slowly along orbits of a rational map under reasonable assumption.
		Also if local height function associated with any proper closed subscheme grows fast
		on a subset of an orbit of a self-morphism, we prove that such subset has Banach density zero under some assumptions.
	\end{abstract}

	\subjclass[2020]{Primary 37P15; %Dynamical systems over global ground fields 
		Secondary %37P05, %Arithmetic and non-Archimedean dynamical systems involving polynomial and rational maps
		%37P30, %Height functions; Green functions; invariant measures in arithmetic and non-Archimedean dynamical systems
		37P55 %Arithmetic dynamics on general algebraic varieties
	}	
	
	\keywords{Arithmetic dynamics, Arithmetic degree, Dynamical Lang-Siegel problem, Local height}

	\maketitle
	\tableofcontents

\section{Introduction}\label{sec:intro}

For a dominant rational map $f \colon X \dashrightarrow X$ on a projective variety defined over $\QQ$,
\emph{arithmetic degree} is one of the most fundamental quantity that measure the arithmetic complexity 
of orbits.
It is defined using height function $h_{H}$ associated with an ample divisor on $X$ as follows:
for a point $x \in X(\QQ)$ whose $f$-orbit is well-defined, i.e.
\begin{align}
f^{n}(x) \notin I_{f} \quad n \geq 0
\end{align}
where $I_{f}$ is the indeterminacy locus of $f$,
\begin{align}
\alpha_{f}(x) := \lim_{n \to \infty} \max\{1, h_{H}(f^{n}(x))\}^{ \frac{1}{n}}
\end{align}
is called the arithmetic degree of $f$ at $x$, provided the limit exists.
It can be easily checked that this limit is independent of the choice of $H$ and $h_{H}$.
Also, we take $\max\{1, \}$ to take care the case $h_{H}(f^{n}(x))$ is bounded, but as soon as the orbit 
is infinite, Northcott theorem says $h_{H}(f^{n}(x))$ goes to $\infty$ and hence we can get rid of $\max\{1, \}$.

It is conjectured that the arithmetic degree of Zariski dense orbits is equal to another dynamical invariant
$d_{1}(f)$, the first dynamical degree (see for example \cite{DS05,Da20,Tr20} for the definition and basic properties of dynamical degree).
\begin{conjecture}[Kawaguchi-Silverman conjecture {\cite{Sil12,KS16b}}]
In the above situation,
$ \alpha_{f}(x)$ exists (i.e.\ the limit exists) and if the orbit $O_{f}(x) = \{ x, f(x), f^{2}(x), \dots\}$
is Zariski dense in $X$, then $ \alpha_{f}(x) = d_{1}(f)$.
\end{conjecture}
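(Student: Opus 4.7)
The plan is to split the conjecture into the two separate assertions: (a) existence of the limit $\alpha_f(x)$ and (b) the equality $\alpha_f(x) = d_1(f)$ when the orbit is Zariski dense. It is classical (Kawaguchi--Silverman) that the upper arithmetic degree $\overline{\alpha}_f(x) = \limsup_n \max\{1,h_H(f^n(x))\}^{1/n}$ is bounded above by $d_1(f)$, so both (a) and (b) reduce to a matching lower bound $\liminf_n h_H(f^n(x))^{1/n} \geq d_1(f)$ under the density hypothesis.

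The first step I would take is to pass to a sufficiently nice birational model $\pi\colon \widetilde X \to X$ on which the lift $\widetilde f$ is as close to algebraically stable as possible, so that one can write $\widetilde f^* H \equiv d_1(f) H + E$ in $N^1(\widetilde X)_\R$ for an explicit error $\R$-divisor $E$ (by definition of $d_1(f)$, $E$ is orthogonal to some positive eigenclass). Applying Weil's height machine and iterating, this yields
\begin{equation}
h_H(f^n(x)) = d_1(f)^n h_H(x) + \sum_{k=0}^{n-1} d_1(f)^{n-1-k}\, h_E(f^k(x)) + O(1),
\end{equation}
so (after dividing by $d_1(f)^n$) the existence of $\alpha_f(x)$ and equality with $d_1(f)$ are equivalent to convergence of the series $\sum_k d_1(f)^{-k} h_E(f^k(x))$ to a finite (and, in the Zariski-dense case, non-dominant) value. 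In other words, a canonical-height-style limit $\widehat h(x) := \lim_n d_1(f)^{-n} h_H(f^n(x))$ should exist and be strictly positive on dense orbits.

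The main obstacle, and the heart of the proof, is controlling the local contributions $h_E(f^k(x))$. These heights can spike when the orbit approaches $\Supp E$, and a priori nothing prevents $h_E(f^k(x))$ from growing faster than $d_1(f)^k$. The strategy I would adopt is a dynamical Lang--Siegel type estimate: decompose $h_E$ into local heights $\lambda_{E,v}$ at each place $v$ and prove that for a Zariski-dense orbit one has $\sum_v \lambda_{E,v}(f^k(x)) = o(d_1(f)^k)$. Heuristically this should follow because a dense orbit can only approach the proper closed subvariety $\Supp E$ at a rate controlled by the geometric intersection theory of $E$ with iterates $f^{-k}H$; the growth of the latter is governed by lower dynamical degrees, which are strictly less than $d_1(f)$. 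Tools I would try: (i) an arithmetic Bezout-type inequality bounding $\lambda_{E,v}(y)$ in terms of $h_H(y)$ and the distance of $y$ from $\Supp E$; (ii) a quantitative Northcott/escape argument excluding clustering of orbit points near $\Supp E$ on a set of positive upper density; (iii) replacing $E$ by its iterates to compare with the subleading dynamical degree $d_1(f|_{\Supp E})$.

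The hardest step will certainly be (i)--(iii) above, i.e.\ the dynamical Lang--Siegel estimate itself, since naive use of the height machine loses a factor of $d_1(f)^k$ and only shows $h_E(f^k(x)) = O(d_1(f)^k)$, which is precisely what one needs to beat. Once that estimate is in place, existence of $\alpha_f(x)$ is immediate from the series representation, and equality with $d_1(f)$ on Zariski-dense orbits follows because $\widehat h(x)>0$ (its zero locus would be a proper $f$-invariant closed subset, contradicting density). The rest of the argument, by contrast, is essentially formal manipulation of the height machine on a suitable model.
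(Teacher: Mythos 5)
The statement you are attempting to prove is a \emph{conjecture} --- the paper states it as such, cites \cite{Sil12,KS16b}, and never claims to prove it. What the paper actually proves (\cref{intro:thm-ad-exists}) is only the \emph{existence} part of the conjecture, and only under the extra hypothesis that the orbit is \emph{generic} (not merely Zariski dense); the equality $\alpha_f(x) = d_1(f)$ remains open. Your proposal therefore cannot be "correct" in the sense of matching a proof in the paper, because no such proof exists there or anywhere else.

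Beyond that mismatch, your sketch has a structural problem: each of its "hard steps" is itself an open problem at least as difficult as the conjecture. The decomposition $\widetilde f^*H \equiv d_1(f)H + E$ with a controllable $E$ does not hold in general --- the Perron--Frobenius eigenclass of $f^*$ on $N^1$ is only nef (possibly degenerate), not ample, so $E := f^*H - d_1(f)H$ need not be negative or small in any usable sense, and the telescoping identity for $h_H(f^n(x))$ carries error terms of the same order as the main term. The "canonical height" $\widehat h(x) := \lim_n d_1(f)^{-n}h_H(f^n(x))$ is not known to exist for general dominant rational maps (Kawaguchi--Silverman constructed it for morphisms with $d_1^2 > d_2$); its existence is essentially what you are trying to prove. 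Likewise, the dynamical Lang--Siegel estimate $\sum_v \lambda_{E,v}(f^k(x)) = o(d_1(f)^k)$ for positive-dimensional $\Supp E$ is exactly the kind of statement the paper says requires Vojta's conjecture (see the discussion before \cref{intro:thm:grwothlocalhtBD}). Finally, the claim that $\widehat h(x) > 0$ on dense orbits "because its zero locus would be a proper $f$-invariant closed subset" is unjustified: $\widehat h$ is not a height attached to a divisor and there is no reason for its zero set to be Zariski closed. The paper's actual route to the (partial) existence result is quite different: it invokes a recursive big-divisor inequality $(f^{2m})^*H + \mu_l^m\mu_{l+1}^m H - \varepsilon^m\mu_l^m (f^m)^*H$ being big from \cite{xie2024algebraic}, turns it into a two-term linear recursion on $h_H(g^n(x'))$ (\cref{lem:htgrowth}), and uses genericity purely to avoid the exceptional locus of that inequality --- no canonical height is ever constructed.
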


We refer \cite{matsuzawa2023recent} for introduction and recent advances on this conjecture.

The first part of the conjecture, that is, the existence of the limit is proven for 
self-morphisms on projective varieties \cite{KS16a}, but it is open for rational maps.
In this paper, we prove that the limit exists for $x$ whose orbit is \emph{generic}
in the following sense.

\begin{definition}
A subset $O \subset X(\QQ)$ is called generic if it is infinite and $O \cap Z$ is finite for 
every proper closed subset $Z \subset X$.
\end{definition}

\begin{theorem}\label{intro:thm-ad-exists}\footnote{When the author wrote the first version of this paper, Junyi Xie informed me that he was also aware of this theorem. The author appreciates his permission to include this theorem in this paper.}
Let $f \colon X \dashrightarrow X$ be a dominant rational map on a projective variety $X$ defined over $\QQ$.
Let $H$ be an ample divisor on $X$ and fix Weil height function $h_{H}$ associated with $H$.
Let $x \in X_{f}(\QQ)$.
If $O_{f}(x)$ is generic, then the limit
\begin{align}
\alpha_{f}(x) = \lim_{n \to \infty} \max\{1, h_{H}(f^{n}(x))\}^{1/n}
\end{align}
exists. 
\end{theorem}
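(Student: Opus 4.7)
The strategy is to adapt the Kawaguchi--Silverman existence argument \cite{KS16a} from self-morphisms to rational maps, using the genericity of $O_{f}(x)$ to tame the failure of Weil-height functoriality along the indeterminacy locus. Set $\lambda := d_{1}(f)$ and $a_{n} := \max\{1, h_{H}(f^{n}(x))\}$; the goal is to show $\lim_{n \to \infty} a_{n}^{1/n}$ exists. In the morphism case the pointwise identity $h_{H}(f(y)) = h_{f^{*}H}(y) + O(1)$ from the Weil height machine expresses $h_{H}(f^{n}(x))$ in terms of the $f^{*}$-orbit of $H$ in $N^{1}(X)_{\R}$, and a Perron--Frobenius / Jordan-form analysis of that orbit yields the limit. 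For a rational map the functoriality identity breaks down on $I_{f}$, but for a \emph{generic} orbit the breakdown can be shown to be asymptotically negligible.

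\textbf{Main steps.} For each fixed $n \ge 1$, choose a resolution $\pi_{n} \colon X_{n} \to X$ of the indeterminacy of $f^{n}$ so that $g_{n} := f^{n} \circ \pi_{n}$ is a morphism, and define the rational-map pullback $(f^{n})^{*}H := \pi_{n,*}(g_{n}^{*}H)$. Applying the Weil height machine on $X_{n}$ and pushing down yields a proper closed subset $Z_{n} \subsetneq X$ (containing $I_{f^{n}}$ together with the $\pi_{n}$-images of the exceptional divisors) and a constant $C_{n}$ with
\begin{equation}
\bigl|\, h_{H}(f^{n}(y)) - h_{(f^{n})^{*}H}(y)\, \bigr| \le C_{n} \quad \text{for all } y \in X_{f}(\QQ) \setminus Z_{n}.
\end{equation}
By genericity of $O_{f}(x)$ the set $\{m : f^{m}(x) \in Z_{n}\}$ is finite for each $n$, so the identity is available at $y = f^{m}(x)$ for all $m \ge M(n)$. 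Pick ample divisors $L_{1}, \ldots, L_{r}$ whose classes span $N^{1}(X)_{\R}$ and expand $(f^{n})^{*}H \equiv \sum_{i} c_{n,i} L_{i}$; then for $m \ge M(n)$,
\begin{equation}
h_{H}(f^{n+m}(x)) = \sum_{i} c_{n,i}\, h_{L_{i}}(f^{m}(x)) + O_{n}(1).
\end{equation}
The definition of $d_{1}(f)$ controls the coefficients as $\max_{i}|c_{n,i}| = \lambda^{n + o(n)}$, and each $h_{L_{i}}$ is comparable to $h_{H}$ up to bounded terms. From this, for every $\epsilon > 0$ one extracts multiplicative estimates of the form $a_{n+m} \le (\lambda + \epsilon)^{n} a_{m} + O_{n, \epsilon}(1)$ valid for $m$ large relative to $n$, together with matching lower bounds on suitable subsequences; a Fekete-type lemma applied to $\log a_{n}$, combined with the a priori bound $\limsup a_{n}^{1/n} \le \lambda$ (itself obtained by taking $n=1$ in the above), then forces $\liminf a_{n}^{1/n} = \limsup a_{n}^{1/n}$ and gives existence of $\alpha_{f}(x)$.

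\textbf{Main obstacle.} The principal difficulty is that both $Z_{n}$ and $C_{n}$ depend on $n$, and a priori $C_{n}$ could grow so fast as to overwhelm the $\lambda^{n+m}$ scaling. The genericity of $O_{f}(x)$ is exactly the input that compensates: it guarantees $f^{m}(x) \notin Z_{n}$ for all but finitely many $m$ once $n$ is fixed. The delicate point is to arrange the quantifier order carefully --- fix $n$ first, let $m \to \infty$, and only then let $n \to \infty$ --- so that the $n$-dependent errors $C_{n}$ are dominated in the final $1/(n+m)$ normalization. Without genericity the orbit could revisit $Z_{n}$ indefinitely and the entire scheme would fail; this is precisely where the rational-map hypothesis enters in an essential way that the morphism argument does not see.
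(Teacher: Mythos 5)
Your proposal takes a genuinely different route from the paper, but it has a gap that I do not see how to fill as written. The paper's proof does \emph{not} adapt the Kawaguchi--Silverman linear-algebra argument for morphisms; instead it invokes a recursive ``big-divisor'' inequality of Xie (Theorem 3.6 of \cite{xie2024algebraic}): for each $\varepsilon \in (0,1)$ there is $m_{\varepsilon}$ so that for $m \geq m_{\varepsilon}$ the class $(f^{2m})^{*}H + \mu_{l}^{m}\mu_{l+1}^{m}H - \varepsilon^{m}\mu_{l}^{m}(f^{m})^{*}H$ is big. This translates, away from a fixed proper closed subset (which genericity lets one eventually avoid), into a three-term recursion on the sequence $h_{H}(g^{n}(x'))$ with $g = f^{m}$, from which a telescoping argument produces a \emph{lower} bound $h_{H}(g^{n+k}(x')) \geq C(\eta\mu_{l})^{mk}h_{H}(g^{n}(x'))$ valid for all $k$. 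That lower bound is the entire content: combined with the already-known upper bound $\overline{\alpha}_{f}(x) \leq d_{1}(f)$, it pins the limit to one of the values $\mu_{l}$.

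The gap in your sketch is precisely at this point. Your estimate $a_{n+m} \leq (\lambda+\varepsilon)^{n}a_{m} + O_{n,\varepsilon}(1)$, even granting it, only recovers the already-known $\limsup a_{n}^{1/n} \leq d_{1}(f)$; the phrase ``together with matching lower bounds on suitable subsequences'' is where the whole theorem lives, and no mechanism is offered for producing such a lower bound. Moreover, the intended adaptation of the morphism argument faces a structural obstruction for rational maps: the pullbacks $(f^{n})^{*}$ on $N^{1}(X)_{\R}$ are \emph{not} the powers of a single linear operator (one has $(f^{n})^{*} \neq (f^{*})^{n}$ in general), so there is no Jordan-form or Perron--Frobenius analysis of a fixed matrix controlling the coordinates $c_{n,i}$ uniformly in $n$, and $\max_{i}|c_{n,i}| = \lambda^{n+o(n)}$ is not something the definition of $d_{1}(f)$ hands you for a rational map. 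Finally, the inequality $a_{n+m} \leq (\lambda+\varepsilon)^{n}a_{m} + O_{n,\varepsilon}(1)$ is not of Fekete (submultiplicative) type: the prefactor $(\lambda+\varepsilon)^{n}$ is an absolute constant rather than $a_{n}$, and there is an additive $n$-dependent error, so a Fekete lemma does not apply to $\log a_{n}$ to force $\liminf = \limsup$. In short, the correct observation that genericity lets you dodge the $n$-dependent bad loci $Z_{n}$ is necessary but nowhere near sufficient; the missing ingredient is a recursion of the Xie type that yields a lower bound on the height growth along the orbit.
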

The key of the proof is certain recursive inequality of pull-backs of divisors 
proven in \cite{xie2024algebraic}.
See \cref{thm:existence_ad_generic,lem:htgrowth} for stronger statements.

Dynamical Mordell-Lang conjecture implies that a Zariski dense orbit is automatically generic.  
(See \cite{DMLbook,xie2023around} for dynamical Mordell-Lang conjecture. It is originally proposed for morphisms on quasi-projective varieties,
but generalized and studied for rational maps as well.)
If this is the case for all dominant rational maps, the above theorem implies the existence of arithmetic degree
for all points. Indeed if the original orbit is not Zariski dense, just replace $X$ with an irreducible component 
of the Zariski closure of the orbit.

Let us say a triple $(X, f, x)$, consisting of a variety $X$ over $\QQ$, a dominant rational map $f \colon X \dashrightarrow X$,
and a point $x \in X_{f}(\QQ)$, satisfies the dynamical Mordell-Lang property if
for any Zariski closed subset $Z \subset X$, the return set $\{ n \in \Z_{\geq 0} \mid f^{n}(x) \in Z\}$
is a finite union of arithmetic progressions.

\begin{corollary}\label{intro:existence-ad-dml}
Let $f \colon X \dashrightarrow X$ be a dominant rational map on a projective variety $X$ defined over $\QQ$.
Let $H$ be an ample divisor on $X$ and fix Weil height function $h_{H}$ associated with $H$.
Let $x \in X_{f}(\QQ)$.
If $(X, f, x)$ satisfies the dynamical Mordell-Lang property, then
\begin{align}
\alpha_{f}(x) = \lim_{n \to \infty} \max\{1, h_{H}(f^{n}(x))\}^{1/n}
\end{align}
exists. 
\end{corollary}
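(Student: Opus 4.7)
The plan is to reduce to \cref{intro:thm-ad-exists} by passing to a suitable iterate of $f$ restricted to an invariant irreducible component of the orbit closure; the dynamical Mordell--Lang hypothesis is what upgrades Zariski density to genericity. If $O_f(x)$ is finite the conclusion is trivial, so assume it is infinite. Let $V := \overline{O_f(x)}$ with irreducible components $V_1, \ldots, V_s$. Since $f|_V : V \dashrightarrow V$ is dominant (its image contains the dense subset $O_f(x) \setminus \{x\}$), $f$ induces a permutation $\sigma$ of $\{V_1, \ldots, V_s\}$: surjectivity follows from dominance, and bijectivity from finiteness. Let $N$ be the order of $\sigma$, set $g := f^N$, and pick $V_{i_0} \ni x$, so that $g|_{V_{i_0}} : V_{i_0} \dashrightarrow V_{i_0}$ is a dominant rational self-map and $O_g(x) \subseteq V_{i_0}$. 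A short argument shows $O_g(x)$ is Zariski dense in $V_{i_0}$: one first verifies that $O_f(x) \cap V_{i_0}$ is dense in $V_{i_0}$ using irreducibility, and then observes that the extra points $f^m(x) \in V_{i_0}$ with $m \not\equiv 0 \pmod N$ lie in the union $\bigcup_{j \neq i_0} V_{i_0} \cap V_j$ of proper closed subsets of $V_{i_0}$, so cannot be responsible for the density.

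The dynamical Mordell--Lang property is inherited by $(V_{i_0}, g|_{V_{i_0}}, x)$: any closed $Z \subseteq V_{i_0}$ is closed in $X$, and the $g$-return set, namely $\{n : Nn \in S\}$ extracted from the $f$-return set $S$ by intersection with $N\Z_{\geq 0}$, is again a finite union of arithmetic progressions. I upgrade density to genericity as follows. If some proper closed $Z \subsetneq V_{i_0}$ had an infinite $g$-return set, it would contain an arithmetic progression $a + b\Z_{\geq 0}$ with $b \geq 1$; form the closures $C_i := \overline{\{g^{a+i+bn}(x) : n \geq 0\}}$ for $i = 0, \ldots, b-1$. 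These cover a cofinite tail of $O_g(x)$, so $\bigcup_i C_i = V_{i_0}$ and, by irreducibility, $C_j = V_{i_0}$ for some $j$. Since $g(C_i) \subseteq C_{i+1}$ cyclically and $g|_{V_{i_0}}$ is dominant, the equality propagates to every index, contradicting $C_0 \subseteq Z$. Hence $O_g(x)$ is generic in $V_{i_0}$, and \cref{intro:thm-ad-exists} applied to $(V_{i_0}, g|_{V_{i_0}}, x)$ with the ample restriction $H|_{V_{i_0}}$ yields the existence of $\beta_0 := \lim_{n \to \infty} \max\{1, h_H(g^n(x))\}^{1/n}$. Running the identical reduction starting from $f^r(x) \in V_{\sigma^r(i_0)}$ produces the existence of $\beta_r := \lim_{n \to \infty} \max\{1, h_H(f^{Nn+r}(x))\}^{1/n}$ for each $r = 0, 1, \ldots, N-1$.

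It remains to show $\beta_0 = \beta_1 = \cdots = \beta_{N-1}$, after which the full limit equals $\beta_0^{1/N}$ and the corollary follows. This is the only non-formal step and constitutes the main obstacle. Set $L_m := \log \max\{1, h_H(f^m(x))\}$; by the standard functoriality of Weil heights along a dominant rational map on its domain of definition, there is a constant $C$ with $L_{m+1} \leq L_m + C$ for every $m \geq 0$ (recall $f^m(x) \notin I_f$ by hypothesis). Iterating $r$ steps from $m = Nn$ gives $L_{Nn+r} \leq L_{Nn} + rC$, while iterating $N - r$ steps from $m = Nn+r$ gives $L_{N(n+1)} \leq L_{Nn+r} + (N-r)C$. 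Dividing by $n$ and letting $n \to \infty$ sandwiches $\log \beta_r$ between $\log \beta_0$ and $\log \beta_0$, forcing $\beta_r = \beta_0$. The reduction and genericity step, while requiring some bookkeeping with the permutation structure on components of the orbit closure, is essentially formal; the coordination of the subsequence limits across residue classes is the genuine content of the argument.
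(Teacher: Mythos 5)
Your argument is correct and follows the same overall strategy as the paper's proof: pass to an irreducible component of the orbit closure that is invariant under an iterate, observe that Zariski density plus dynamical Mordell--Lang forces genericity there, and invoke \cref{intro:thm-ad-exists}. The paper streamlines the reduction by taking a single maximal-dimensional $f^r$-invariant component $Z$ containing $f^s(x)$ and then citing \cite[Lemmas 2.7, 2.8]{MW22} to identify $\underline{\alpha}_{f}(x)$ and $\overline{\alpha}_{f}(x)$ with $\underline{\alpha}_{f^r|_Z}(f^s(x))^{1/r}$ and $\overline{\alpha}_{f^r|_Z}(f^s(x))^{1/r}$; you instead run the reduction on every residue class mod $N$ (the order of the permutation on components) and re-derive by hand, via the elementary bound $L_{m+1}\leq L_m + C$, the compatibility of the subsequence limits that the cited lemmas encode. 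Both are fine; yours is longer but self-contained, and in fact once $\beta_0$ exists the squeeze $L_{N(n+1)}-(N-r)C\le L_{Nn+r}\le L_{Nn}+rC$ already forces $\lim_m L_m/m = (\log\beta_0)/N$, so the separate construction of $\beta_1,\dots,\beta_{N-1}$ via independent reductions is unnecessary. Two small points you glossed over but which are easily repaired: if $\{x\}$ happens to be a $0$-dimensional component of $V=\overline{O_f(x)}$ then $f|_V$ is not dominant, so one should first discard a finite initial segment of the orbit (each $0$-dimensional component is hit at most once); and the assertion that $f$ permutes the components should be justified by the density of $O_f(x)\cap V_i$ in each $V_i$ (which forces surjectivity, hence bijectivity, of the induced map on components), not merely by ``dominance plus finiteness.''
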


As Dynamical Mordell-Lang conjecture is known for \'etale morphisms \cite{BGT10DMLetale}
and self-morphisms on $\A^{2}_{\QQ}$ \cite{Xi17}, we have the followings.

\begin{corollary}\label{intro:existence-ad-auto}
Let $X$ be a quasi-projective variety over $\QQ$. 
Let $X \hookrightarrow  P$ be any immersion to a projective variety $P$
over $\QQ$ and let $H$ be an ample divisor on $P$.
Fix a Weil height function $h_{H}$ associated with $H$.
Let $f \colon X \longrightarrow X$ be an \'etale morphism.
Then for any point $x \in X(\QQ)$, the limit
\begin{align}
\alpha_{f}(x) = \lim_{n \to  \infty} \max\{1, h_{H}(f^{n}(x))\}^{1/n}
\end{align}
exists.
Here $h_{H}$ is regarded as a function on $X(\QQ)$ via the embedding $X \hookrightarrow  P$.
\end{corollary}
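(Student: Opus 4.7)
The plan is to reduce to \Cref{intro:existence-ad-dml} by passing to a projective compactification of $X$ and transferring the dynamical Mordell-Lang property.

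First I would let $\bar{X}$ denote the Zariski closure of $X$ in $P$, which is a projective variety containing $X$ as a dense open subset. Since $f \colon X \to X$ is étale, it is flat and hence open, so its image is dense in $X$; composing with the open immersion $X \hookrightarrow \bar{X}$ and using that $\bar{X}$ is proper yields a dominant rational self-map $\bar{f} \colon \bar{X} \dashrightarrow \bar{X}$ restricting to $f$ on $X$. Because $f$ is already a morphism on $X$, the indeterminacy locus of $\bar{f}$ is contained in $\bar{X} \setminus X$. Inductively, $\bar{f}^{n}$ agrees with $f^{n}$ on $X$ and is defined at every point of $X$. Since the $f$-orbit $O_{f}(x)$ is contained in $X$, each iterate satisfies $\bar{f}^{n}(x) = f^{n}(x) \in X$ and in particular avoids the indeterminacy of $\bar{f}$, so $x \in \bar{X}_{\bar{f}}(\QQ)$.

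Next I would verify the dynamical Mordell-Lang property for the triple $(\bar{X}, \bar{f}, x)$. For any closed subset $Z \subset \bar{X}$, set $V := Z \cap X$, which is closed in $X$. Since the orbit stays in $X$,
\begin{align}
\{\, n \geq 0 : \bar{f}^{n}(x) \in Z \,\} = \{\, n \geq 0 : f^{n}(x) \in V \,\}.
\end{align}
The right-hand side is a finite union of arithmetic progressions by the dynamical Mordell-Lang theorem for étale endomorphisms of quasi-projective varieties \cite{BGT10DMLetale}, so the same is true of the left-hand side.

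Finally, the restriction $H|_{\bar{X}}$ is an ample divisor on $\bar{X}$, and $h_{H}$ restricted to $\bar{X}(\QQ)$ is a Weil height associated with it. Applying \Cref{intro:existence-ad-dml} to $(\bar{X}, \bar{f}, x)$ with this height produces the existence of $\lim_{n \to \infty} \max\{1, h_{H}(\bar{f}^{n}(x))\}^{1/n}$, which is exactly the limit in the statement because $\bar{f}^{n}(x) = f^{n}(x)$ for every $n$. The entire argument is essentially formal, and the only point that needs attention is ensuring that the indeterminacy of the compactified map avoids the orbit and that DML descends from $X$ to $\bar{X}$; both follow routinely from the fact that $f$ is a morphism on $X$. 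No new dynamical input beyond \cite{BGT10DMLetale} and \Cref{intro:existence-ad-dml} is required.
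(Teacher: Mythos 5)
Your proof is correct and takes essentially the same route as the paper, which reduces to \cref{intro:existence-ad-dml} via the dynamical Mordell--Lang theorem for \'etale morphisms \cite{BGT10DMLetale}. The compactification details you supply (passing to $\bar X \subset P$, extending $f$ to a dominant rational $\bar f$, checking that the orbit avoids $I_{\bar f}$, and descending the DML property from $(X,f,x)$ to $(\bar X, \bar f, x)$) are precisely the steps the paper's one-line proof leaves implicit, and they are genuinely needed since \cref{intro:existence-ad-dml} is stated only for projective $X$.
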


\begin{remark}
We refer \cite[Remark 2.4]{matsuzawa2023recent} and \cite[Lemma 3.8]{JSXZ21} for the definition of arithmetic degree on quasi-projective varieties,
namely the independence of the choice of ambient projective varieties.
\end{remark}

\begin{corollary}
Let $f \colon \A^{2}_{\QQ} \longrightarrow \A^{2}_{\QQ}$ be a dominant morphism.
Let $h$ be the naive height function on $\P^{2}(\QQ)$ and restrict it to $\A^{2}(\QQ)$
via the open immersion $\A^{2} \longrightarrow \P^{2}, (a,b) \mapsto (a:b:1)$.
Then for any $x \in \A^{2}(\QQ)$, the limit
\begin{align}
\alpha_{f}(x) = \lim_{n \to  \infty} \max\{1, h(f^{n}(x))\}^{1/n}
\end{align}
exists.
\end{corollary}

The proof of \cref{intro:thm-ad-exists}, namely \cref{lem:htgrowth}
has an application to dynamical Lang-Siegel problem.
Dynamical Lang-Siegel problem in a classical form asks if
the size of coordinates of orbits are asymptotically the same.
This problem can be reformulated by using local height function in the following form:
for a dominant rational map $f \colon X \dashrightarrow X$ defined over a number field $K$,
a proper closed subscheme $Y \subset X$, and a point $x \in X(K)$, when do we have
\begin{align}
\lim_{n \to \infty} \frac{\sum_{v \in S}\l_{Y,v}(f^{n}(x))}{h_{H}(f^{n}(x))} = 0 \ ?
\end{align}
Here $S$ is a finite set of places of $K$, $\l_{Y,v}$ is a local height function associated with $Y$,
and $h_{H}$ is any height function associated with any ample divisor.
See \cite{Sil93dml,mat2023growthlocal,Yas2015integral,Yas2014deviation} for some of the results related to this problem.

To state our results on dynamical Lang-Siegel problem,
let us introduce some notation.
Let $K$ be a number field.
The standard proper set of absolute values on $K$ is denoted by $M_{K}$ (see Convention below).
Let $X$ be a projective geometrically integral variety over $K$.
For a dominant rational map $f \colon X \dashrightarrow X$ defined over $K$, we write 
\begin{align}
X_{f}(K) = \left\{ x \in X(K) \ \middle|\ f^{n}(x) \notin I_{f},\ n \geq 0 \right\}.
\end{align}
This is a set of points for which the forward $f$-orbits are well-defined.
Let us introduce a backward version $X_{f}^{\rm back}$, that is 
$X_{f}^{\rm back}$ is, roughly speaking,  the largest subset over which $f^{n}$ are finite for all $n \geq 0$.
See \cref{def:Xfback} and \cref{section:uscbackconvergence} for the formal definition.
For $x \in X$ at which $f$ is quasi-finite, let
\[
e_{f}(x) = l_{ \mathcal{O}_{X,x}}(\mathcal{O}_{X,x}/f^{*} \mathfrak{m}_{f(x)}\O_{X,x})
\]
where $l_{ \mathcal{O}_{X,x}}$ stands for the length as an $\mathcal{O}_{X,x}$-module, 
$ \mathfrak{m}_{f(x)}$ is the maximal ideal of the local ring $\O_{X,f(x)}$, 
and $f^{*} \colon \O_{X, f(x)} \longrightarrow \O_{X,x}$ is the induced local homomorphism.
For a finite subset $Y \subset X_{f}^{\rm back}$, we define
\begin{align}\label{intro:def:ebar}
e(f; Y) := \lim_{n \to \infty} \big( \max\{ e_{f^{n}}(z) \mid z \in f^{-n}(Y)  \}  \big)^{1/n}.
\end{align}
See \cref{sec:mult} for more about this definition.

The following theorem is a rational map version of the author's results \cite[Theorem 1.11]{mat2023growthlocal},
which is a generalization of Silverman's result \cite[Theorem E]{Sil93dml}.
Note that \cite[Theorem 1.11]{mat2023growthlocal} does not assume the orbit is generic,
while the following theorem does.

\begin{theorem}\label{intro:thm-dls-rat-map}
Let $f \colon X \dashrightarrow X$ be a dominant rational map on a geometrically integral projective variety $X$ 
defined over a number field $K$.
Let $H$ be an ample Cartier divisor on $X$ and $h_{H}$ an associated Weil height function.
Let $Y \subset X$ be a $0$-dimensional closed subscheme and fix local height function $\{ \l_{Y,v} \}_{v \in M_{K}}$ associated with $Y$.
Suppose $Y \subset X_{f}^{\rm back}$.
Let $x \in X_{f}(K)$ be such that $O_{f}(x)$ is generic.
If 
\begin{align}
e(f;Y) < \alpha_{f}(x),
\end{align}
then for any finite subset $S \subset M_{K}$, we have
\begin{align}
\lim_{n \to \infty} \frac{\sum_{v \in S}\l_{Y,v}(f^{n}(x))}{h_{H}(f^{n}(x))} = 0.
\end{align}
\end{theorem}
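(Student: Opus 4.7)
My plan is to follow the template of the morphism case treated in \cite[Theorem 1.11]{mat2023growthlocal}, using the newly established \cref{intro:thm-ad-exists} (more precisely, its quantitative companion \cref{lem:htgrowth}) as a replacement for the Kawaguchi--Silverman existence theorem for self-morphisms, and using the hypothesis $Y \subset X_{f}^{\rm back}$ to bypass the pathologies caused by the indeterminacy locus of $f$. It suffices to fix one $v \in S$ and prove that $\lambda_{Y,v}(f^{n}(x))/h_{H}(f^{n}(x)) \to 0$, since the theorem then follows by summing over the finite set $S$.

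First I would pull back the local height. Since $Y \subset X_{f}^{\rm back}$, for every $n \geq 0$ the scheme-theoretic preimage $(f^{n})^{*}Y$ is a well-defined $0$-dimensional closed subscheme of $X$, and standard functoriality of local heights yields
\begin{align}
\lambda_{Y,v}(f^{n}(x)) = \lambda_{(f^{n})^{*}Y,\, v}(x) + O(1).
\end{align}
Writing $(f^{n})^{*}Y = \sum_{z \in f^{-n}(Y)} e_{f^{n}}(z)\,[z]$ as a $0$-cycle with multiplicities, and noting that at the place $v$ the point $x$ is $v$-adically close to at most one preimage $z_{n,v} \in f^{-n}(Y)$, one obtains
\begin{align}
\lambda_{Y,v}(f^{n}(x)) \leq M_{n}\cdot \lambda_{z_{n,v},v}(x) + O(1),
\end{align}
where $M_{n} := \max\{e_{f^{n}}(z) \mid z \in f^{-n}(Y)\}$. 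By \eqref{intro:def:ebar}, for any $\epsilon > 0$ one has $M_{n} \leq (e(f;Y) + \epsilon)^{n}$ for $n \gg 0$.

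Next I would control the remaining factors. A Liouville-type inequality derived from the product formula gives $\lambda_{z_{n,v},v}(x) \leq C_{1} h_{H}(x) + C_{2} h_{H}(z_{n,v}) + O(1)$; the quasi-finiteness of $f^{n}$ at $z_{n,v}$ (guaranteed by $Y \subset X_{f}^{\rm back}$) together with the bounded height of $f^{n}(z_{n,v}) \in Y$ controls the growth of $h_{H}(z_{n,v})$ in $n$, keeping it subexponential relative to the ratio below. On the orbit of $x$, \cref{intro:thm-ad-exists} yields the existence of $\alpha_{f}(x)$, and \cref{lem:htgrowth} supplies the lower bound $h_{H}(f^{n}(x)) \geq (\alpha_{f}(x) - \epsilon)^{n}$ for $n \gg 0$. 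Choosing $\epsilon$ small enough that $e(f;Y) + \epsilon < \alpha_{f}(x) - \epsilon$, one concludes
\begin{align}
\frac{\lambda_{Y,v}(f^{n}(x))}{h_{H}(f^{n}(x))} \ll \left(\frac{e(f;Y) + \epsilon}{\alpha_{f}(x) - \epsilon}\right)^{n} \longrightarrow 0.
\end{align}

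The main obstacle --- and the reason this theorem is a genuine extension of the morphism case rather than a routine repackaging --- is the rational-map setting: for a rational map, both the existence of the limit defining $\alpha_{f}(x)$ and the functoriality of local heights under pullback can fail, the former at points with non-generic orbit and the latter at the indeterminacy locus. These are handled respectively by the genericity hypothesis via \cref{intro:thm-ad-exists} and by $Y \subset X_{f}^{\rm back}$. The most delicate technical point is to ensure that the Liouville bound on $h_{H}(z_{n,v})$ and the error terms in the pullback formula are dominated by the gap $\alpha_{f}(x)/(e(f;Y) + \epsilon) > 1$; this is where the quantitative form \cref{lem:htgrowth} of the existence theorem is essential.
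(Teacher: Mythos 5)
Your approach has a genuine gap, and the heart of the issue is that you pull back the local height through all $n$ iterates of $f$, whereas the paper only pulls back through a \emph{fixed} number $k$ of iterates (with $k$ depending on a previously chosen $\epsilon$, but not on $n$). The difference is not cosmetic. When you write
\begin{align}
\lambda_{Y,v}(f^{n}(x)) = \lambda_{(f^{n})^{*}Y,\, v}(x) + O(1),
\end{align}
the implied constant in that $O(1)$ depends on $n$ (it is the bounded-difference constant attached to the map $f^{n}$ and the scheme $(f^{n})^{*}Y$), and nothing in your argument controls how this constant grows with $n$. Likewise, the claim that $h_{H}(z_{n,v})$ is ``subexponential relative to the ratio'' is not established: for a general dominant rational map one only gets from quasi-finiteness an inequality of the shape $h_{H}(z) \leq N\, h_{H}(f(z)) + C$ with $N$ possibly much larger than $1$, so iterating gives exponential bounds whose base has no a priori relation to $\alpha_{f}(x)/e(f;Y)$. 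Finally, the ``Liouville-type inequality'' you invoke does not deliver the needed leverage: a Liouville bound on $\lambda_{z,v}(x)$ carries constants (and, if unpacked, a degree factor) that sit outside the multiplicity $M_{n}$, and nothing forces them to be beaten by the gap between $e(f;Y)$ and $\alpha_{f}(x)$.

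The paper's proof sidesteps all three issues at once. After reducing to $S=\{v\}$, $Y = \{y\}$ a rational point, and passing to $g = f^{m}$ on the shifted orbit $x' = f^{s}(x)$ (via \cref{lem:htgrowth}), it fixes $\epsilon > 0$ and chooses a single $k$ so large that $\max\{e_{g^{k}}(z)\} < \epsilon(\eta\alpha_{f}(x))^{mk}$. It then pulls back only $k$ steps, obtaining $\lambda_{y,v}(g^{n}(x')) \leq \lambda_{g^{-k}(y),v}(g^{n-k}(x')) + C_{1}$ with $C_{1}$ depending on $k$ but not on $n$. Since $g^{-k}(y)$ is a fixed $0$-dimensional scheme, the appendix's Roth-type estimate (\cref{thm:roth}) gives $\lambda_{g^{-k}(y),v} \leq 3\max\{e_{g^{k}}(z)\}\, h_{H}$ outside a \emph{finite} exceptional set, which the infinite generic orbit eventually avoids. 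Combined with the quantitative height growth $h_{H}(g^{n}(x')) \geq C(\eta\alpha_{f}(x))^{mk} h_{H}(g^{n-k}(x'))$ from \cref{lem:htgrowth}, one gets $\limsup_{n}\lambda_{y,v}(g^{n}(x'))/h_{H}(g^{n}(x')) \leq 3\epsilon/C$ and lets $\epsilon \to 0$. The essential ideas you are missing are (i) pulling back only a bounded number of steps so that all constants are $n$-independent, and (ii) replacing the Liouville inequality by Roth's theorem, which gives a linear-in-$h_{H}$ bound with coefficient controlled by $\max e_{g^{k}}(z)$ and a merely finite exceptional set.
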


In \cite{mat2023growthlocal}, the author proves that the similar result still holds for positive dimensional $Y$
if $X$ is smooth, $f$ is a morphism (i.e.\ $I_{f}=  \emptyset$), and one assumes Vojta's conjecture.
It seems difficult to avoid the conjecture at this moment, 
but we are able to prove a weaker statement without assuming it.
Note that when $f$ is a surjective self-morphism, it is finite and hence $X_{f}^{\rm back} = X$.
Moreover, the limit \cref{intro:def:ebar} exists for any closed subset $Y \subset X$
(cf.\ \cite[section 4]{mat2023growthlocal}). 

\begin{theorem}\label{intro:thm:grwothlocalhtBD}
Let $f \colon X \longrightarrow X$ be a surjective morphism on a smooth projective geometrically irreducible variety $X$ 
defined over a number field $K$.
Let $H$ be an ample Cartier divisor on $X$ and $h_{H}$ an associated Weil height function.
Let $Y \subset X$ be a proper closed subscheme and fix a local height function $\{ \l_{Y,v} \}_{v \in M_{K}}$ associated with $Y$.
Let $x \in X(K)$ be a point.
Let $S \subset M_{K}$ be a finite set of places.
Suppose
\begin{enumerate}
\item $Y$ contains no positive dimensional $f$-periodic subvariety;
\item $e(f;Y) < \alpha_{f}(x)$.
\end{enumerate}
Then if an infinite sequence $\{n_{j}\}_{j=1}^{\infty}$ satisfies
\begin{align}
\liminf_{j\to \infty} \frac{\sum_{v \in S}\l_{Y,v}(f^{n_{j}}(x))}{h_{H}(f^{n_{j}}(x))} > 0,
\end{align}
then it has Banach density zero (cf.\ \cref{def:banach-density}).
\end{theorem}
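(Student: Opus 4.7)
The plan is to argue by contradiction, using the pullback identity $\l_{Y,v}\circ f^n=\l_{f^{n,*}Y,v}+O(1)$ to convert the assumed positive-density recurrence of the $f$-orbit into an estimate on cumulative local heights of pullback divisors, which is then controlled by a refined arithmetic B\'ezout argument exploiting assumption (1) and reducing to the zero-dimensional case of \cref{intro:thm-dls-rat-map}.

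Suppose that $A:=\{n_j\}$ has positive upper Banach density. By \cref{def:banach-density}, there exist $\delta,\eta>0$ and intervals $I_k=[m_k,m_k+N_k]$ with $N_k\to\infty$ such that
\begin{align*}
A_k\ :=\ \Bigl\{n\in I_k\ \Big|\ \textstyle\sum_{v\in S}\l_{Y,v}(f^n(x))\ \ge\ \eta\,h_H(f^n(x))\Bigr\}
\end{align*}
satisfies $\#A_k\ge \delta(N_k+1)$. Combined with the lower bound $h_H(f^n(x))\ge c\lambda^n$ for some $e(f;Y)<\lambda<\a_f(x)$ coming from \cref{lem:htgrowth}, the sum $\sum_{n\in A_k}\sum_{v\in S}\l_{Y,v}(f^n(x))$ is bounded below by $c_1\lambda^{m_k+\delta N_k}$ for an absolute constant $c_1>0$ (the worst case being when $A_k$ is packed at the left end of $I_k$).

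The heart of the proof is an upper bound on the same sum. Rewriting via the pullback identity yields $\sum_{v\in S}\l_{\Delta_k,v}(f^{m_k}(x))+O(N_k)$, where $\Delta_k:=\sum_{n=0}^{N_k}f^{n,*}Y$. I would then invoke the recursive pullback inequality of \cite{xie2024algebraic} (as in the proof of \cref{intro:thm-ad-exists}) combined with a decomposition of $\Delta_k$ into an ``invariant core'' concentrated on a zero-dimensional subscheme $Z\subseteq Y$ (the maximal closed subscheme whose geometric points have entire forward orbit in $Y$ and are not eventually contracted; its $0$-dimensionality follows from a Noetherian pigeonhole on the finitely many irreducible components of $Y$ of each dimension, using assumption (1) to rule out the positive-dimensional periodic case) and a ``transverse'' part whose irreducible components spread out under forward iteration. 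The target estimate is of the form
\begin{align*}
\sum_{v\in S}\l_{\Delta_k,v}(f^{m_k}(x))\ \le\ C_1(N_k+1)\sum_{v\in S}\l_{Z,v}(f^{m_k}(x))\ +\ o\bigl(\lambda^{m_k+\delta N_k}\bigr),
\end{align*}
the transverse contribution being absorbed into the error by arithmetic B\'ezout together with the dynamical spreading from (1).

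To conclude, since $Z$ is $0$-dimensional, \cref{intro:thm-dls-rat-map} applies after restricting to the $f$-invariant Zariski closure of the orbit tail of $x$ (where the orbit is Zariski dense and the generic-orbit hypothesis is arranged by passing to an irreducible component), giving $\sum_{v\in S}\l_{Z,v}(f^{m_k}(x))=o(h_H(f^{m_k}(x)))=o(\lambda^{m_k})$. Feeding this into the trapping inequality makes the right-hand side $o((N_k+1)\lambda^{m_k})+o(\lambda^{m_k+\delta N_k})=o(\lambda^{m_k+\delta N_k})$, which contradicts the lower bound $c_1\lambda^{m_k+\delta N_k}$ for $k$ sufficiently large. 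The principal obstacle is establishing the trapping inequality with the displayed error term: extracting the correct zero-dimensional $Z$, then controlling the transverse components of the iterated pullbacks $f^{n,*}Y$ uniformly in $v\in S$ and $n\in[0,N_k]$ in such a way that their contribution is absorbable into $o(\lambda^{m_k+\delta N_k})$, is where I expect the proof to be most intricate and where the smoothness of $X$ and the algebraic input of \cite{xie2024algebraic} should enter in an essential way.
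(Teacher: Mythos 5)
Your proposal takes a genuinely different route from the paper, but it has a critical unfilled gap at its center. You propose to sum local heights over an interval $I_k$, lower-bound the sum by $c_1\lambda^{m_k+\delta N_k}$ using the assumed Banach density, and upper-bound it via a ``trapping inequality'' that reduces everything to a zero-dimensional invariant core $Z$ plus an error $o(\lambda^{m_k+\delta N_k})$. The problem is that the trapping inequality is not established, and I don't see how it could be: the quantity $\sum_{n=0}^{N_k}\l_{Y,v}(f^{m_k+n}(x))$ can be large even when the invariant set $\bigcap_n f^{-n}(Y)$ is empty, because the orbit can approach $Y$ at many times $n$ but at \emph{different} points of $Y$ each time. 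Your idea that the ``transverse components spread out under forward iteration'' is exactly the phenomenon the theorem is trying to establish — using it inside the proof is circular unless you have an independent argument, which you acknowledge you do not (``the principal obstacle is establishing the trapping inequality''). The arithmetic B\'ezout and the recursive pullback inequality of \cite{xie2024algebraic} do not obviously supply this: the former bounds heights of intersections, not sums of local heights along orbits, and the latter governs numerical classes, not the local geometry of where the orbit lands near $Y$. There is also a secondary issue: you invoke \cref{intro:thm-dls-rat-map}, which requires the orbit to be generic, whereas the hypotheses here do not grant that; the paper instead uses the zero-dimensional case of a different result that does not need genericity.

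The paper's proof is quite different and avoids all of this. It is an induction on $\dim Y$. After reducing to $S$ a singleton and $Y$ irreducible, one assumes $P=\{n_j\}$ has positive Banach density and applies a combinatorial lemma (\cite[Lemma 2.1]{BGT15DMLnoe}) to extract a subset $Q\subset P$ of positive Banach density and a fixed gap $k\geq 1$ with $n\in Q\Rightarrow n+k\in P$. For $n\in Q$ both $\l_{Y,v}(f^n(x))$ and $\l_{Y,v}(f^{n+k}(x))$ are a definite fraction of $h_H$; functoriality of local heights then forces $\l_{Y\cap f^{-k}(Y),v}(f^n(x))$ to be large on $Q$. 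Because $Y$ is irreducible and not $f$-periodic, $Y\cap f^{-k}(Y)$ has strictly smaller dimension, and the induction hypothesis applied to it gives the contradiction. This sidesteps any need to sum over an interval, decompose iterated pullbacks, or invoke arithmetic B\'ezout; the only combinatorial tool is the BGT lemma, and the only geometric input is that intersecting a non-periodic irreducible $Y$ with $f^{-k}(Y)$ drops dimension.
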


\noindent
{\bf convention}
In this paper, we work over a number field or a field of characteristic zero.
\begin{itemize}
\item An  \emph{algebraic scheme} over a field $k$ is a separated scheme of finite type over $k$;
\item A  \emph{variety} over $k$ is an algebraic scheme over $k$ which is irreducible and reduced;
\item A  \emph{nice variety} over a field $k$ is a smooth projective geometrically irreducible scheme over $k$;
\item Let $X$ be a scheme over a field $k$ and $k \subset k'$ be a field extension. 
 \emph{The base change} $X \times_{\Spec k}\Spec k'$ is denoted by $X_{k'}$.
For an ``object" $A$ on $X$, we sometimes use the notation $A_{k'}$ to express the base change of $A$ to $k'$ 
without mentioning to the definition of the base change if the meaning is clear.
%\item For a closed subscheme $Y \subset X$, the  \emph{ideal sheaf defining $Y$} is denoted by $\I_{Y}$;
\item For a self-morphism $f \colon X \longrightarrow X$ of an algebraic scheme over $k$ and a 
point $x$ of $X$ (scheme point or $k'$-valued point where $k'$ is a field containing $k$), the  \emph{$f$-orbit of $x$}
is denoted by $O_{f}(x)$, i.e. $O_{f}(x) = \{ f^{n}(x) \mid n=0,1,2, \dots\}$.
The same notation is used for dominant rational map $f \colon X \dashrightarrow X$ on a variety $X$ defined over $k$
and $x \in X_{f}(k) = \{ x \in X(k) \mid f^{n}(x) \notin I_{f}, \ n \geq 0\}$.
\item
For a number field $K$, let $M_{K}$ denote the set of absolute values normalized as in \cite[p11 (1.6)]{BG06}.
Namely, Namely, if $K=\Q$, then $M_{\Q}=\{|\ |_{p} \mid \text{$p=\infty$ or a prime number} \}$ with
\begin{align}
 &|a|_{\infty} =
 \begin{cases}
 a \quad \text{if $a\geq0$}\\
 -a \quad \text{if $a<0$}
 \end{cases}
 \\
 & |a|_{p} = p^{-n} \quad \txt{if $p$ is a prime and  $a=p^{n}\frac{k}{l}$ where\\ $k,l$ are non zero integers coprime to $p$.}
\end{align}
For a number field $K$, $M_{K}$ consists of the following absolute values:
\begin{align}
|a|_{v} = |N_{K_{v}/\Q_{p}}(a)|_{p}^{1/[K:\Q]} 
\end{align}
where $v$ is a place of $K$ which restricts to $p = \infty$ or a prime number.

\item
Let $X$ be a projective variety over a number field $K$.
Let $Y \subset X$ be a closed subscheme.
A local height function associated with $Y$ is denoted by $\l_{Y,v}$ or $\{ \l_{Y,v}\}_{v \in M_{K}}$.
When $v \in M_{K}$ is extended to an absolute value on the algebraic closure $ \overline{K}$,
we can extend $\l_{Y,v}$ to a function on $(X \setminus Y)( \overline{K})$. This extended local height function
is also denoted by $\l_{Y,v}$.
(cf.  \cite[section 2]{mat2023growthlocal}, \cite{BG06}, \cite{HS00}, \cite{La83}.)
\item
The length of a module $M$ over a ring $A$ is denoted by $l_{A}(M)$.
\end{itemize}

\begin{acknowledgements}
The author would like to thank Kaoru Sano for helpful discussions.
He would also like to thank Joseph Silverman for his comments on the first version of this paper.
The author would like to thank the referee for valuable comments, especially on \cref{thm:existence_ad_generic} and its proof.
The author is supported by JSPS KAKENHI Grant Number JP22K13903.
\end{acknowledgements}

\section{Height growth and arithmetic degree}

In this section, we prove \cref{intro:thm-ad-exists,intro:existence-ad-auto}.

We use the following quantity, introduced and called cohomological Lyapunov exponent in \cite{xie2024algebraic}.
\begin{definition}
Let $f \colon X \dashrightarrow X$ be a dominant rational map on a projective variety $X$ defined over $\QQ$.
The $i$-th dynamical degree of $f$ is denoted by $d_{i}(f)$ for $i = 0, \dots, \dim X$.
We set
\begin{align}
\mu_{i}(f) = \frac{d_{i}(f)}{d_{i-1}(f)}
\end{align}
for $i = 1, \dots, \dim X$ and set $\mu_{\dim X + 1}(f) = 0$.
Let us say $f$ has dynamical peak $p \in \{1, \dots, \dim X\}$ if
\begin{align}
\mu_{p}(f) > 1 \ \text{and}\ \mu_{p+1}(f) \leq 1.
\end{align}
Note that having dynamical peak is equivalent to the condition $d_{1}(f) > 1$.
\end{definition}

By log concavity of dynamical degrees, we have
\begin{align}
\mu_{1}(f) \geq  \mu_{2}(f) \geq  \cdots \geq \mu_{p}(f) > 1 \geq \mu_{p+1}(f).
\end{align}

The following \cref{thm:existence_ad_generic} is \cref{intro:thm-ad-exists} with some additional 
statement.
The key of the proof is \cref{lem:htgrowth}, which proves stronger 
statement on the growth rate of ample height function along orbits.

\begin{theorem}\label{thm:existence_ad_generic}
Let $f \colon X \dashrightarrow X$ be a dominant rational map on a projective variety $X$ defined over $\QQ$.
Let $H$ be an ample divisor on $X$ and fix Weil height function $h_{H}$ associated with $H$.
Let $x \in X_{f}(\QQ)$.
If $O_{f}(x)$ is generic, then
\begin{align}
\alpha_{f}(x) := \lim_{n \to \infty} h_{H}(f^{n}(x))^{1/n}
\end{align}
exists. Moreover, $ \alpha_{f}(x)$ takes one of the following values:
\begin{align}
\mu_{1}(f) = d_{1}(f), \mu_{2}(f), \dots , \mu_{\dim X}(f).
\end{align}
Note that $ \alpha_{f}(x) \geq 1$ by definition, so we can ignore $\mu_{i}(f)$'s that are strictly less than $1$.
\end{theorem}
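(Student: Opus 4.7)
The plan is to deduce the theorem from a strengthened height-growth estimate (to be extracted as \cref{lem:htgrowth}) whose proof uses the recursive pullback inequality of Xie \cite{xie2024algebraic}. That inequality controls $(f^n)^{*}H$ up to an effective error supported in a proper closed subset of $X$, with coefficients growing at rates governed by the dynamical degrees. The genericity hypothesis is used crucially: for any fixed proper Zariski-closed subset $Z \subsetneq X$, the orbit $O_f(x)$ meets $Z$ only finitely often, so any height contribution from a divisor supported in such a $Z$ is $O(1)$ along the orbit, hence negligible after taking the $1/n$-th root.

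Concretely, I would first pass to a birational model on which the relevant iterates are sufficiently well-behaved, and translate Xie's divisor inequality into a height bound of the form
\[
h_H(f^n(x)) \;\leq\; \sum_{i=1}^{p} A_{i,n}(x) + O(1),
\]
where each $A_{i,n}(x)$ is a nonnegative height-like quantity whose growth in $n$ is at most $\mu_i(f)^n$ up to subexponential slack, and where the $O(1)$ absorbs all contributions from divisors supported in the fixed proper closed subset provided by Xie's construction. A matching lower bound of the same shape follows from ampleness of $H$ together with the symmetric version of Xie's inequality.

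With these two-sided bounds in hand, I would perform a finite descent on $i \in \{1, \dots, p\}$ using the log-concavity chain $\mu_1(f) \geq \cdots \geq \mu_p(f) > 1 \geq \mu_{p+1}(f)$. The largest index $i$ at which $A_{i,n}(x)$ fails to be subexponentially dominated identifies $\alpha_f(x) = \mu_i(f)$; if every $A_{i,n}(x)$ stays bounded, then $\alpha_f(x) = 1$. Existence of the true limit, not merely equality of $\limsup$ and $\liminf$, follows because applying the recursion to $f^{n+1}$ and $f^n$ yields matching upper and lower bounds up to $O(1)$ along the generic orbit, and intermediate values between consecutive $\mu_i(f)$'s are excluded since the growth rates of the $A_{i,n}(x)$ are pinned to the $\mu_i(f)$.

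The main obstacle, and the reason Xie's input is essential, is that each application of the pullback recursion for a rational map introduces fresh indeterminacy and exceptional loci, so a priori their union over all $n$ is not a proper Zariski-closed subset of $X$. Xie's inequality provides the crucial uniformity: at each filtration level the relevant error divisors can be supported in a single proper closed subset of $X$ independent of $n$, so that only finitely many closed subsets enter the final estimate and genericity can be applied once and for all.
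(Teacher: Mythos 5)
Your high-level framing is on the right track: you correctly identify Xie's divisor inequality as the key input, you correctly see that genericity is used to neutralize the error terms supported on a fixed proper closed subset, and you correctly guess that the possible values of $\alpha_f(x)$ are $1, \mu_p(f), \dots, \mu_1(f)$. However, there is a genuine gap in the mechanism: the central engine of the paper's argument is a \emph{three-term linear recursion}, and your proposal does not contain it.

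The inequality from Xie produces, after pulling back to a birational model and restricting to the generic part of the orbit, an estimate of the form
\begin{align}
h_{H}(g^{n+2}(x')) + \mu_{l}^{m}\mu_{l+1}^{m}\, h_{H}(g^{n}(x')) - \varepsilon^{m}\mu_{l}^{m}\, h_{H}(g^{n+1}(x')) \geq C_{1}
\end{align}
with $g = f^m$. This is a single recursive inequality whose ``characteristic roots'' are close to $\mu_l^m$ and $\mu_{l+1}^m$. The crucial step — which is not a matter of matching independent upper and lower bounds — is a bootstrap: one chooses $l$ minimal with $\mu_{l+1} < \overline{\alpha}_f(x) \leq \mu_l$; since $\overline{\alpha}_f(x)$ strictly exceeds $\mu_{l+1}$, the quantity $h_H(g^{n+1}(x')) - (\varepsilon^{-2}\mu_{l+1})^m h_H(g^n(x'))$ must eventually turn positive; once positive, the recursion forces it to grow by factors of $(\varepsilon^2\mu_l)^m$, which yields $\underline{\alpha}_f(x) \geq \varepsilon^2\mu_l$, and letting $\varepsilon \to 1$ gives $\underline{\alpha}_f(x) \geq \mu_l = \overline{\alpha}_f(x)$. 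Your proposal instead posits a decomposition $h_H(f^n(x)) \lesssim \sum_i A_{i,n}(x)$ with each $A_{i,n}$ growing at rate $\mu_i(f)^n$, plus a ``matching lower bound from ampleness of $H$ together with the symmetric version of Xie's inequality,'' but no such decomposition or symmetric inequality is in play; the lower bound on $\underline{\alpha}_f(x)$ comes from analyzing the \emph{same} two-step recursion, not from a dual estimate. Without the bootstrap from a strict inequality $\overline{\alpha}_f(x) > \mu_{l+1}$ to exponential growth at rate at least $\mu_l$, there is no way to rule out that the limit oscillates or falls strictly between $\mu_{l+1}$ and $\mu_l$, which is precisely the assertion to be proved.

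Two further points worth flagging. First, a pure upper bound $\overline{\alpha}_f(x) \leq d_1(f)$ is already known and is cited, not re-derived; the theorem's content lies entirely in forcing $\underline{\alpha}_f(x)$ up to the appropriate $\mu_l$. Second, the passage to the iterate $g = f^m$ and then back to $f$ via the identities $\underline{\alpha}_f = \underline{\alpha}_{f^m}^{1/m}$ and $\underline{\alpha}_f(x) = \underline{\alpha}_f(f^s(x))$ is a necessary piece of bookkeeping (since Xie's inequality only holds for $m$ large); your sketch does not account for it, and without this one cannot move from the recursion, which concerns a sub-orbit along multiples of $m$, to a statement about the full orbit.
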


\begin{remark}
Let us note that under the assumption of the theorem,
$ \alpha_{f}(x)$ is expected to be equal to $ d_{1}(f)$ according to Kawaguchi-Silverman conjecture.
\end{remark}

For $x \in X_{f}(\QQ)$, we write 
\begin{align}
&\overline{\alpha}_{f}(x) = \limsup_{n \to \infty} \max\{1, h_{H}(f^{n}(x)) \}^{1/n}\\
&\underline{\alpha}_{f}(x) = \liminf_{n \to \infty} \max\{1, h_{H}(f^{n}(x)) \}^{1/n}.
\end{align}
We obviously have $ 1 \leq \underline{\alpha}_{f}(x) \leq \overline{\alpha}_{f}(x)$.
Moreover, we always have $ \overline{\alpha}_{f}(x) \leq d_{1}(f) = \mu_{1}(f)$ \cite[Theorem 1.4]{Ma20a},\cite[Proposition 3.11]{JSXZ21}.

\begin{lemma}\label{lem:htgrowth}
Let $f \colon X \dashrightarrow X$ be a dominant rational map on a projective variety $X$ defined over $\QQ$.
Let $x \in X_{f}(\QQ)$ be such that $O_{f}(x)$ is generic.
Let $H$ be an ample Cartier divisor on $X$ and 
let $h_{H}$ be an associated Weil height function such that $h_{H} \geq 1$.
Let $l \in \{1, \dots, \dim X\}$ be such that $\mu_{l}(f) > \mu_{l+1}(f)$.
If 
\begin{align}
\overline{\alpha}_{f}(x)= \limsup_{n \to \infty} h_{H}(f^{n}(x))^{1/n} > \mu_{l+1}(f),
\end{align}
then
there is $C \in \R_{>0}$ such that for any $\eta \in (0, 1)$,
there is $m_{0} \in \Z_{\geq 1}$ with the following property.
For any given $m \in \Z_{\geq m_{0}}$, we have
\begin{align}
h_{H}((f^{m})^{n+k}(f^{s}(x)) ) \geq C (\eta \mu_{l}(f))^{mk} h_{H}((f^{m})^{n}(f^{s}(x))).
\end{align}
for all large enough $s, n \in \Z_{\geq 0}$ and for all $k \in \Z_{\geq 0}$.
The logical order of the quantifiers are as follows:
\begin{align}
&\exists C \in \R_{>0}, \forall \eta \in (0, 1), \exists m_{0} \in \Z_{\geq 1}, \forall m \in \Z_{\geq m_{0}}, \\
&\exists s_{0} \in \Z_{\geq0}, \exists n_{0} \in \Z_{\geq 0}, \forall s \in \Z_{\geq s_{0}}, \forall n \in \Z_{\geq n_{0}}, \forall k \in \Z_{\geq 0}.
\end{align}
\end{lemma}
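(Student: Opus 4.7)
The plan is to reduce the desired multiplicative height estimate to a single linear recursion of the form $h_H(f^m(y)) \geq \lambda_m h_H(y) - C'_m$ valid off a proper closed subset $Z_m \subsetneq X$, then use genericity of $O_f(x)$ to ignore $Z_m$ and iterate. The numerical data $\eta_0, C$ are then built from quantities independent of $\eta$, while $m_0$, $s_0$, $n_0$ depend on successively finer choices, matching the quantifier order in the statement.

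First I would extract from \cite{xie2024algebraic} the following form of Xie's recursive pull-back inequality: there is $\eta_0 \in (0,1)$ such that for every $\eta \in (\eta_0,1)$ and every sufficiently large $m \geq m_0(\eta)$ one can find a proper Zariski closed subset $Z_m \subsetneq X$ and a constant $c_m > 0$ giving an inequality of $\R$-Cartier classes $(f^m)^* H \geq (\eta \mu_l(f))^m H - c_m E_m$ with $E_m$ an effective divisor supported on $Z_m$. Taking heights and using functoriality yields
\[
h_H(f^m(y)) \geq \lambda_m h_H(y) - C'_m
\]
for every $y \in (X \setminus Z_m)(\QQ)$ at which $f^m$ is defined, where $\lambda_m := (\eta \mu_l(f))^m > 1$. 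The gap $\mu_l(f) > \mu_{l+1}(f)$ is used here to single out the $l$-th dynamical degree cleanly, modulo a correction confined to a strictly smaller stratum, and the slack factor $\eta$ compensates for the fact that the asymptotic value $\mu_l(f)^m$ is realised only up to a multiplicative error decaying as $m \to \infty$.

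Next, since $Z_m$ is proper in $X$ and $O_f(x)$ is generic, the set $\{j \geq 0 \mid f^j(x) \in Z_m\}$ is finite, so there exists $s_0$ with $f^j(x) \notin Z_m$ for all $j \geq s_0$. For any $s \geq s_0$ and any $n \geq 0$, the point $(f^m)^n(f^s(x)) = f^{s+mn}(x)$ lies outside $Z_m$, so the recursion applies at every step. Rewriting it as
\[
h_H(f^m(y)) + \tfrac{C'_m}{\lambda_m - 1} \geq \lambda_m \Bigl( h_H(y) + \tfrac{C'_m}{\lambda_m-1} \Bigr)
\]
and iterating $k$ times starting at $y = (f^m)^n(f^s(x))$ gives the same inequality with $\lambda_m$ replaced by $\lambda_m^k$. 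To convert this additive bound into the clean multiplicative one I would invoke $\overline{\alpha}_f(x) > \mu_{l+1}(f) \geq 1$ to locate $n_0$ with $h_H((f^m)^{n_0}(f^s(x))) > 2 \cdot \tfrac{C'_m}{\lambda_m-1}$: since $\limsup_j h_H(f^j(x))^{1/j} > 1$, there are arbitrarily large $j$ with $h_H(f^j(x))$ above this threshold, and the iterated inequality then forces $h_H((f^m)^n(f^s(x)))$ to remain above it for every $n \geq n_0$. Plugging this back yields the claimed estimate with a uniform $C = 1/2$.

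The main obstacle is the first step: extracting from Xie's divisor-theoretic statement a single exceptional locus $Z_m$ that does \emph{not} depend on the iteration parameter $k$, so that genericity of $O_f(x)$ eliminates all exceptional events at once. A naive pull-back argument would produce the union $Z_m \cup f^{-m}(Z_m) \cup f^{-2m}(Z_m) \cup \dots$, which is not Zariski closed, and it is precisely Xie's construction in \cite{xie2024algebraic} that replaces this accumulating family by a single $Z_m$ at the price of the factor $\eta < 1$. The hypothesis $\overline{\alpha}_f(x) > \mu_{l+1}(f)$ plays the complementary role of guaranteeing that $O_f(x)$ is not eventually trapped in a stratum where the pull-back grows only at rate $\mu_{l+1}$, which is exactly what is needed to locate $n_0$ and to ensure the lower bound is not vacuous.
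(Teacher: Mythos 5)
The proposal breaks at the very first step: you claim that from Xie's theorem one can extract a \emph{two-term} pull-back inequality of the form
\begin{align}
(f^m)^* H \geq (\eta \mu_l(f))^m H - c_m E_m
\end{align}
modulo an effective divisor $E_m$ supported on a proper closed subset, i.e.\ a one-step height recursion $h_H(f^m(y)) \geq \lambda_m h_H(y) - C'_m$ off $Z_m$. This is \emph{not} what Xie's theorem gives, and it is false in general. Xie's result (\cite[Theorem 3.6]{xie2024algebraic}, quoted in the paper's proof) is a \emph{three-term} inequality: $(f^{2m})^*H + \mu_l^m\mu_{l+1}^m H - \e^m\mu_l^m (f^m)^*H$ is big. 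To see that your two-term form fails, take $X = \P^1 \times \P^1$ and $f = g_1 \times g_2$ a morphism with $\deg g_1 = 2$, $\deg g_2 = 3$, so $\mu_1 = 3$, $\mu_2 = 2$, $l = 1$. With $H = H_1 + H_2$ (sum of the two ruling classes), $(f^m)^*H = 2^m H_1 + 3^m H_2$, and for $\eta$ close to $1$ and $m$ large, the $H_1$-coefficient of $(f^m)^*H - (\eta\cdot 3)^m H$ is $2^m - (3\eta)^m < 0$; since $f$ is a morphism there is no exceptional locus to absorb this, so the class is not pseudo-effective. No constant $c_m$ or effective $E_m$ can fix this.

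The three-term structure is essential and is what makes the hypothesis $\overline{\alpha}_f(x) > \mu_{l+1}(f)$ necessary in a substantive way. In the paper, the recursion only propagates growth once the quantity $h_H(g^{n+1}(x')) - (\e^{-2}\mu_{l+1})^m h_H(g^n(x'))$ becomes (and, by the recursion, stays) positive; the hypothesis $\overline{\alpha}_f(x) > \mu_{l+1}$ is exactly what guarantees such an index $n_2$ exists, since otherwise $\limsup h_H(g^n(x'))^{1/n} \leq (\e^{-2}\mu_{l+1})^m$, contradicting $\overline{\alpha}_g(x') = \overline{\alpha}_f(x)^m$. Your proposal uses the hypothesis merely to ensure the height exceeds a fixed threshold, which already follows from Northcott plus infiniteness of the orbit (genericity implies this), so your version does not use the hypothesis in any essential way---a clear signal that the route cannot work. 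The observations about genericity avoiding a fixed exceptional locus $Z_m$ and about not accumulating pull-backs of $Z_m$ are correct and do match the paper's use of genericity, but they do not rescue the missing first step.
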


\begin{proof}
We write $\mu_{i}(f) = \mu_{i}$.
By \cite[Theorem 3.6]{xie2024algebraic}, for any $\e \in (0,1)$, there is $m_{\e} \in \Z_{\geq 1}$ such that
\begin{align}
(f^{2m})^{*}H + \mu_{l}^{m}\mu_{l+1}^{m} H - \e^{m} \mu_{l}^{m} (f^{m})^{*}H
\end{align}
is big as an element of $ \widetilde{\Pic}(X)_{\R}$ for all $m \geq m_{\e}$.
Here $ \widetilde{\Pic}(X)_{\R}$ is the colimit of $\Pic (X')_{\R}$ where $X'$ runs over birational models of $X$.
See \cite{xie2024algebraic} for the detail.

Let us fix an arbitrary $\e_{1} \in (0,1)$ with the following properties:
\begin{align}\label{choice-of-e1}
\frac{\mu_{l+1}}{\e_{1}^{3} \mu_{l}} < 1\quad \text{and}\quad \overline{\alpha}_{f}(x) > \e_{1}^{-2} \mu_{l+1}.
\end{align}
Let us fix $e \in (\e_{1},1)$ and set
\begin{align}
C = 1 - \frac{\e_{1}}{e}.
\end{align}
Let us take arbitrary $\eta \in (0,1)$.
Then take $\e_{2} \in (e, 1)$ such that $\e_{2}^{2} \geq \eta$. Note that we still have
\begin{align}
\frac{\mu_{l+1}}{\e_{2}^{3} \mu_{l}} < 1.
\end{align}
Then there is $m_{0} \in \Z_{\geq 1}$ with the following properties:
\begin{align}
m_{0} \geq \max\{ m_{\e_{1}}, m_{\e_{2}} \}
\end{align}
and for all $m \geq m_{0}$ we have
\begin{align}
\frac{1}{2} (\e_{i} \mu_{l})^{m} > (\e_{i}^{2}\mu_{l})^{m} + (\e_{i}^{-2}\mu_{l+1})^{m} \label{mpineq}
\end{align}
for $i = 1,2$.

Fix an $m \geq m_{0}$.
Fix a birational model $\pi \colon X_{\pi} \longrightarrow X$ such that 
\begin{align}
&\text{
$\f := f^{m} \circ \pi$ and $\psi := f^{2m} \circ \pi$ are morphisms,
}\\
&
\text{
$\psi^{*}H + \mu_{l}^{m} \mu_{l+1}^{m} \pi^{*}H - \e_{i}^{m} \mu_{l}^{m}\f^{*}H$ is big as an element of $\Pic(X_{\pi})_{\R}$ for $i=1,2$.
}
\end{align}
See the following diagram:
\begin{equation}
\begin{tikzcd}
X_{\pi} \arrow[d, "\pi", swap] \arrow[rd, "\f"] \arrow[rrd, "\psi", bend left=20]& & \\
X \arrow[r, dashed, "f^{m}", swap] & X \arrow[r,dashed, "f^{m}", swap] & X
\end{tikzcd}.
\end{equation}
Thus there is a proper closed subset $Z \subset X_{\pi}$ and $C_{1} \in \R$ such that
\begin{align}
h_{H} \circ \psi + \mu_{l}^{m} \mu_{l+1}^{m} h_{H} \circ \pi - \e_{i}^{m} \mu_{l}^{m} h_{H} \circ \f \geq C_{1} \label{htineq}
\end{align}
for $i=1,2$ on $(X_{\pi} \setminus Z)(\QQ)$.

Since $O_{f}(x)$ is generic, there is $s_{0}\geq 0$ such that
\begin{align}
f^{s}(x) \not\in I_{\pi^{-1}} \cup \pi(Z)
\end{align}
for all $s \geq s_{0}$.
We fix an $s \geq s_{0}$.

Set $x' = f^{s}(x)$ and $g = f^{m}$. Then note that we have 
\begin{align}
&x' \in X_{g}(\QQ);\\
& g^{n}(x') \not\in I_{\pi^{-1}} \cup \pi(Z) \ \text{for all $n\geq 0$}.
\end{align}
By \cref{htineq}, we have
\begin{align}
h_{H}(g^{n+2}(x')) + \mu_{l}^{m} \mu_{l+1}^{m} h_{H}(g^{n}(x')) - \e_{i}^{m}\mu_{l}^{m}h_{H}(g^{n+1}(x')) \geq C_{1}
\end{align}
for $i = 1,2$ and for all $n\geq 0$.
Since $O_{g}(x')$ is infinite and all the points of  $O_{g}(x')$ are defined over a number field,
we have $h_{H}(g^{n}(x')) \to \infty$ as $n \to \infty$.
Thus there is $n_{1} \in \Z_{\geq 0}$ such that
\begin{align}
C_{1} + \frac{1}{2}\e_{i}^{m}\mu_{l}^{m}h_{H}(g^{n+1}(x')) \geq 0
\end{align}
for $i=1,2$ and for all $n \geq n_{1}$.
Thus for $n \geq n_{1}$, we have
\begin{align}
h_{H}(g^{n+2}(x')) + \mu_{l}^{m} \mu_{l+1}^{m} h_{H}(g^{n}(x')) - \frac{1}{2} \e_{i}^{m}\mu_{l}^{m}h_{H}(g^{n+1}(x')) \geq 0
\end{align}
or equivalently,
\begin{align}
h_{H}(g^{n+2}(x')) + (\e_{i}^{2}\mu_{l})^{m} (\e_{i}^{-2}\mu_{l+1})^{m} h_{H}(g^{n}(x')) - \frac{1}{2} \e_{i}^{m}\mu_{l}^{m}h_{H}(g^{n+1}(x')) \geq 0
\end{align}
for $i=1,2$.
By \cref{mpineq}, we get
\begin{align}
h_{H}(g^{n+2}(x')) &+ (\e_{i}^{2}\mu_{l})^{m} (\e_{i}^{-2}\mu_{l+1})^{m} h_{H}(g^{n}(x')) \\
&-  ((\e_{i}^{2}\mu_{l})^{m}+ (\e_{i}^{-2}\mu_{l+1})^{m}) h_{H}(g^{n+1}(x')) \geq 0
\end{align}
and hence
\begin{equation}
\begin{aligned}
&h_{H}(g^{n+2}(x'))- (\e_{i}^{-2}\mu_{l+1})^{m} h_{H}(g^{n+1}(x')) \\
&\geq 
(\e_{i}^{2}\mu_{l})^{m} \big\{ h_{H}(g^{n+1}(x'))  - (\e_{i}^{-2}\mu_{l+1})^{m} h_{H}(g^{n}(x')) \big\} \label{ht-rec-ineq}
\end{aligned}
\end{equation}
for $i=1,2$ and for all $n \geq n_{1}$.
When $\mu_{l+1} = 0$, we get
\begin{align}
h_{H}(g^{n+2}(x'))  \geq 
(\e_{i}^{2}\mu_{l})^{m}  h_{H}(g^{n+1}(x'))   \label{ht-rec-ineq-mp1zero}
\end{align}
for $i=1,2$ and all $n \geq n_{1}$.

Suppose $\mu_{l+1} > 0$.
Then there is $n_{2} \geq n_{1}$ such that 
$h_{H}(g^{n_{2} + 1}(x')) > (\e_{1}^{-2}\mu_{l+1})^{m} h_{H}(g^{n_{2}}(x'))$.
Indeed, if 
\begin{align}
h_{H}(g^{n + 1}(x')) \leq  (\e_{1}^{-2}\mu_{l+1})^{m} h_{H}(g^{n}(x'))
\end{align}
for all $n \geq n_{1}$, then we have
\begin{align}
\overline{\alpha}_{g}(x') = \limsup_{n \to \infty} h_{H}(g^{n}(x'))^{1/n} \leq (\e_{1}^{-2}\mu_{l+1})^{m}.
\end{align}
But $ \overline{\alpha}_{g}(x') = \overline{\alpha}_{f}(x')^{m} = \overline{\alpha}_{f}(x)^{m}$ (cf.\ \cite[Lemma 2.7]{MW22}),
this contradicts to \cref{choice-of-e1}.

By \cref{ht-rec-ineq}, we have
\begin{align}
h_{H}(g^{n+1}(x'))  - (\e_{1}^{-2}\mu_{l+1})^{m} h_{H}(g^{n}(x')) > 0
\end{align}
for all $n \geq n_{2} $.
Thus by \cref{ht-rec-ineq}, for any $k \geq 0$ and $n \geq n_{2} + 1$, we have
\begin{align}
h_{H}(g^{n+k}(x')) 
&\geq 
h_{H}(g^{n+k}(x')) - (\e_{2}^{-2}\mu_{l+1})^{m} h_{H}(g^{n+k-1}(x')) \\
&\geq 
(\e_{2}^{2}\mu_{l})^{mk} \big\{ h_{H}(g^{n}(x')) - (\e_{2}^{-2}\mu_{l+1})^{m} h_{H}(g^{n-1}(x'))  \big\}\\
& \geq
(\e_{2}^{2}\mu_{l})^{mk} \bigg( 1 - \frac{\e_{1}^{2m}}{\e_{2}^{2m}} \bigg)h_{H}(g^{n}(x'))\\ 
& \geq
(\e_{2}^{2}\mu_{l})^{mk} \bigg( 1 - \frac{\e_{1}^{2m}}{e^{2m}} \bigg)h_{H}(g^{n}(x'))\\
& \geq
C(\e_{2}^{2}\mu_{l})^{mk} h_{H}(g^{n}(x')). \label{ht-rec-ineq-general}
\end{align}

Combining with \cref{ht-rec-ineq-mp1zero}, in either cases $\mu_{l+1}=0$ or not, we have
\begin{align}
h_{H}(g^{n+k}(x')) \geq C(\e_{2}^{2}\mu_{l})^{mk} h_{H}(g^{n}(x'))
\geq C(\eta\mu_{l})^{mk} h_{H}(g^{n}(x'))
\end{align}
for $k \geq 0$ and $n \geq n_{2} + 1$.
Thus setting $n_{0} = n_{2} + 1$, we are done.
\end{proof}

\begin{proof}[Proof of \cref{thm:existence_ad_generic}]
We may take $h_{H} \geq 1$.
Let us write $\mu_{i} = \mu_{i}(f)$.

We first recall that $1 \leq \overline{\alpha}_{f}(x) \leq d_{1}(f) = \mu_{1}$. 
Let $l \in \{1, \dots, \dim X\}$ be the smallest one with the property $\mu_{l+1} < \overline{\alpha}_{f}(x)$.
Then we have $ \mu_{l+1} <  \overline{\alpha}_{f}(x) \leq \mu_{l}$.
By \cref{lem:htgrowth}, 
there is $C >0$ with the following property: for $\eta \in (0,1)$ arbitrary close to $1$, there are $m \geq 1$, $n \geq 0$, $s \geq 0$
such that
\begin{align}
h_{H}((f^{m})^{n+k}(f^{s}(x))) \geq C (\eta \mu_{l})^{mk}h_{H}((f^{m})^{n}(f^{s}(x)))
\end{align}
for any $k \geq 0$.
Thus 
\begin{align}
\underline{\alpha}_{f^{m}}(f^{mn+s}(x)) = \liminf_{k \to \infty} h_{H}((f^{m})^{n+k}(f^{s}(x)))^{1/k} \geq (\eta \mu_{l})^{m}.
\end{align}
Hence
\begin{align}
\underline{\alpha}_{f}(x) = \underline{\alpha}_{f}(f^{mn +s}(x)) = \underline{\alpha}_{f^{m}}(f^{mn +s}(x))^{1/m} \geq \eta \mu_{l}
\end{align}
(cf.\ \cite[Lemma 2.7]{MW22} for the first and second equality).
By letting $\eta \to 1$, we get $ \underline{\alpha}_{f}(x) \geq \mu_{l}$.
Therefore we get $ \mu_{l} \leq \underline{\alpha}_{f}(x) \leq \overline{\alpha}_{f}(x) \leq \mu_{l}$ and we are done.
\end{proof}

\begin{remark}
In our proof of \cref{thm:existence_ad_generic}, genericness is used for avoiding indeterminacy locus
of the rational map to a birational model and the locus where the height of our big divisors are negative.
If our self-map is a morphism and the big divisors involved are all ample, then these issues disappear 
and the same argument works for infinite orbits.
This is the case, for example, when $X = (\P^{1}_{\QQ})^{d}$ and 
$f = g_{1} \times \cdots \times g_{d}$, where $g_{i} \colon \P^{1}_{\QQ} \longrightarrow \P^{1}_{\QQ}$
are surjective morphisms with $\deg g_{i} = a_{i} > 1$.
Indeed, every big divisor is ample on $X$, and so the ampleness automatically follows.
Moreover this example makes clear the limitation of our method.
If $a_{1} \geq a_{2} \geq \cdots \geq a_{d}$, then 
\begin{align}
&d_{1}(f) = a_{1}, d_{2}(f) = a_{1}a_{2}, \dots, d_{d}(f) = a_{1} \cdots a_{d} \quad \text{and}\\
& \mu_{1}(f) = a_{1}, \mu_{2}(f) = a_{2}, \dots , \mu_{d}(f) = a_{d}.
\end{align}
Each $a_{i}$ is realized as the arithmetic degree of $(x_{1}, \dots , x_{d}) \in X(\QQ)$
where $x_{j}$ are preperiodic points of $g_{j}$ for $ j \neq i$ and 
$x_{i}$ is a point with infinite $g_{i}$-orbit.

\end{remark}

\begin{proof}[Proof of \cref{intro:existence-ad-dml}]
If the orbit $O_{f}(x)$ is finite, $ \alpha_{f}(x) = 1$ and we are done.
Suppose $ O_{f}(x)$ is infinite.
Let $ Y = \overline{O_{f}(x)}$  be the Zariski closure of the orbit.
Take an irreducible component  $Z \subset Y$ such that $\dim Z = \dim Y > 0$
with the following property: there is an $r \in \Z_{\geq 1}$ such that $f^{r}$ induces a dominant rational map
$f^{r}|_{Z} \colon Z \dashrightarrow Z$, and $f^{s}(x) \in Z$ for some $s \in \Z_{\geq 0}$.

Then it is easy to check that $O_{f^{r}}(f^{s}(x))$ is Zariski dense in $Z$.
By our assumption,  $(f^{r}|_{Z}, Z, f^{s}(x))$ satisfies 
dynamical Mordell-Lang property, and
hence $O_{f^{r}}(f^{s}(x))$ is generic in $Z$.
Thus by \cref{thm:existence_ad_generic}, we have
\begin{align}
\underline{\alpha}_{f^{r}|_{Z}}(f^{s}(x)) = \overline{\alpha}_{f^{r}|_{Z}}(f^{s}(x)).
\end{align}
Thus by \cite[Lemma 2.7, Lemma 2.8]{MW22},
\begin{align}
\underline{\alpha}_{f}(x) &= \underline{\alpha}_{f}(f^{s}(x)) = \underline{\alpha}_{f^{r}}(f^{s}(x))^{1/r}
= \underline{\alpha}_{f^{r}|_{Z}}(f^{s}(x))^{1/r} \\
&= \overline{\alpha}_{f^{r}|_{Z}}(f^{s}(x))^{1/r} = \overline{\alpha}_{f^{r}}(f^{s}(x))^{1/r} = \overline{\alpha}_{f}(f^{s}(x)) = \overline{\alpha}_{f}(x)
\end{align}
and we are done.
\end{proof}

\begin{proof}[Proof of \cref{intro:existence-ad-auto}]
This follows from \cref{intro:existence-ad-dml} since 
dynamical Mordell-Lang conjecture holds for \'etale morphisms \cite{BGT10DMLetale}.
\if0
If the orbit $O_{f}(x)$ is finite, $ \alpha_{f}(x) = 1$ and we are done.
Suppose $ O_{f}(x)$ is infinite.
Let $ Y = \overline{O_{f}(x)}$  be the Zariski closure of the orbit.
Take an irreducible component  $Z \subset Y$ such that $\dim Z = \dim Y > 0$
with the following property:
\begin{align}
f^{r}(Z) = Z \quad \text{and} \quad f^{s}(x) \in Z
\end{align}
for some $r \in \Z_{\geq 1}$ and $s \in \Z_{\geq 0}$.
Then it is easy to check that $O_{f^{r}}(f^{s}(x))$ is Zariski dense in $Z$.
Since dynamical Mordell-Lang conjecture holds for $f^{r} \colon X \longrightarrow X$ (\cite{BGT10DMLetale}),
$O_{f^{r}}(f^{s}(x))$ is generic in $Z$.
Thus by \cref{thm:existence_ad_generic}, we have
\begin{align}
\underline{\alpha}_{f^{r}|_{Z}}(f^{s}(x)) = \overline{\alpha}_{f^{r}|_{Z}}(f^{s}(x)).
\end{align}
Thus by \cite[Lemma 2.7, Lemma 2.8]{MW22},
\begin{align}
\underline{\alpha}_{f}(x) &= \underline{\alpha}_{f}(f^{s}(x)) = \underline{\alpha}_{f^{r}}(f^{s}(x))^{1/r}
= \underline{\alpha}_{f^{r}|_{Z}}(f^{s}(x))^{1/r} \\
&= \overline{\alpha}_{f^{r}|_{Z}}(f^{s}(x))^{1/r} = \overline{\alpha}_{f^{r}}(f^{s}(x))^{1/r} = \overline{\alpha}_{f}(f^{s}(x)) = \overline{\alpha}_{f}(x)
\end{align}
and we are done.
\fi
\end{proof}

\section{Dynamical Lang-Siegel problem for rational map}\label{sec:dml-rat-map}

In this section, we prove \cref{intro:thm-dls-rat-map}.

\begin{definition}\label{def:Xfback}
Let $X$ be a geometrically integral projective variety over a number field $K$.
Let $f \colon X \dashrightarrow X$ be a dominant rational map.
We set
\begin{equation}
\begin{tikzcd}
\G_{f} \arrow[d,"p_{1}",swap] \arrow[dr, "p_{2}"]& \\
X \arrow[r, "f", swap,dashed]& X
\end{tikzcd}
\end{equation}
where $\G_{f}$ is the graph of $f$.
Let $U \subset X$ be the largest open subset such that
\begin{align}
&p_{2} \colon p_{2}^{-1}(U) \longrightarrow U \ \text{is finite and}\\
&p_{2}^{-1}(U) \cap p_{1}^{-1}( I_{f}) =  \emptyset.
\end{align}
We define a sequence of open subsets $U_{n} \subset X$, $n=0,1,2, \dots$ inductively as follows.
We set $U_{0} = X$.
Let $n \geq 0$ and suppose we have constructed $U_{n}$  so that
$f^{n}$ is finite over $U_{n}$ (i.e. there is an open subset $V \subset X \setminus I_{f^{n}}$ such that $f^{n}|_{V} \colon V \longrightarrow U_{n}$ is finite).
Then define 
\begin{align}
U_{n+1} = U_{n} \setminus f^{n}( f^{-n}(U_{n}) \setminus U ).
\end{align}
By construction, we have
\begin{equation}
\begin{tikzcd}
X \arrow[r, "f", dashed] & X \arrow[r, "f", dashed]  & \cdots \arrow[r, "f", dashed]  & X \arrow[r, "f", dashed]  & X \\
f^{-n}(U_{n}) \arrow[u, phantom, "\subset", sloped] \arrow[r, "f"] & f^{-(n-1)}(U_{n}) \arrow[u, phantom, "\subset", sloped]  \arrow[r, "f"]  & \cdots  \arrow[r, "f"] & f^{-1}(U_{n}) \arrow[u, phantom, "\subset", sloped] \arrow[r, "f"] & U_{n}  \arrow[u, phantom, "\subset", sloped] \\
& U \arrow[u,phantom, "\supset", sloped]  && U \arrow[u,phantom, "\supset", sloped]  & U \arrow[u,phantom, "\supset", sloped]  
\end{tikzcd}
\end{equation}
with all the morphisms in the second row are finite.
Then we define
\begin{align}
X_{f}^{\rm back} := \bigcap_{n \geq 0} U_{n}.
\end{align}
This is a Zariski topological space with the induced topology from $X$.
See \cref{subsec:finlociratmap} for more about this construction and basic properties.
\end{definition}
Note that for a subset $Y \subset X_{f}^{\rm back}$, we can define its inverse image $f^{-k}(Y) \subset X$ naturally
for $k \geq 0$. Moreover, we have $f^{-(k+l)}(Y) = f^{-l}(f^{-k}(Y))$ for all $k,l \geq 0$.
By \cref{lem:maximalityUn}, we can easily see $X_{f}^{\rm back} \subset X_{f^{m}}^{\rm back}$ for all $m \geq 1$.

For a field extension $K \subset L$, we have
\begin{align}
(X_{L} \longrightarrow X)^{-1}(X_{f}^{\rm back}) \subset (X_{L})_{f_{L}}^{\rm back}.
\end{align}
%Equality always holds?

\begin{definition}\label{def:ebar}
Let $X$ be a geometrically integral projective variety over a number field $K$.
Let $f \colon X \dashrightarrow X$ be a dominant rational map.
Let $Y \subset X$ be a closed subscheme.
Suppose $\dim Y = 0$ and $Y \subset X_{f}^{\rm back}$.
We define
\begin{align}
e(f; Y) := \lim_{n \to \infty} \bigg( \max\{ e_{f^{n}}(z) \mid z \in f^{-n}(Y)  \}  \bigg)^{1/n}.
\end{align}
See \cref{sec:mult} for the existence of this limit.
\end{definition}
We refer \cref{sec:mult} for the basic properties of this quantity.
Some important properties are
\begin{itemize}
\item $e(f^{m} ; Y) = e(f;Y)^{m}$ for $m \geq 1$;
\item $e(f ; Y) = e(f_{ \overline{K}} ; Y_{ \overline{K}})$.
\end{itemize}
Moreover, the limit
\begin{align}
e_{f, -}(x) := \lim_{n \to \infty} \bigg( \max\{ e_{f^{n}}(z) \mid z \in f^{-n}(x)  \}  \bigg)^{1/n}
\end{align}
exists for $x \in X_{f}^{\rm back}$ and $e_{f,-}$ is upper semicontinuous on $X_{f}^{\rm back}$.
We can easily see that $e_{f,-}=1$ at the generic point of $X_{f}^{\rm back}$.
Since $e(f; Y) = \max_{y \in Y} e_{f,-}(y)$, there is an open dense subset $X_{f}^{\rm back}$
such that $e(f;Y)=1$ as soon as $Y$ is contained in it.

\begin{theorem}[\cref{intro:thm-dls-rat-map}]\label{thm:dls-coh-hyp}
Let $f \colon X \dashrightarrow X$ be a dominant rational map on a geometrically integral projective variety $X$ defined over a number field $K$.
Let $H$ be an ample Cartier divisor on $X$ and 
let $h_{H}$ be an associated Weil height function.
Let $Y \subset X$ be a $0$-dimensional closed subscheme such that $Y \subset X_{f}^{\rm back}$.
Let $x \in X_{f}(K)$ be such that $O_{f}(x)$ is generic.
If 
\begin{align}
e(f;Y) < \alpha_{f}(x),
\end{align}
then for any finite subset $S \subset M_{K}$, we have
\begin{align}
\lim_{n \to \infty} \frac{\sum_{v \in S}\l_{Y,v}(f^{n}(x))}{h_{H}(f^{n}(x))} = 0.
\end{align}
\end{theorem}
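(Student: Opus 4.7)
My plan is to bound the numerator $\sum_{v \in S}\lambda_{Y,v}(f^n(x))$ from above by something growing essentially like $e(f;Y)^n$ and to bound the denominator $h_H(f^n(x))$ from below by something growing like $\alpha_f(x)^n$ via \cref{lem:htgrowth}, so that the strict inequality $e(f;Y) < \alpha_f(x)$ forces the ratio to zero.

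First I would set up the constants. Since $e(f;Y) \geq 1$, the hypothesis forces $\alpha_f(x) > 1$, so by \cref{thm:existence_ad_generic} $\alpha_f(x) = \mu_l(f)$ for some $l$ with $\mu_l(f) > \mu_{l+1}(f)$. Fix $\delta \in (0,1)$ small and $\eta \in (0,1)$ close enough to $1$ that
\begin{equation}
\tau := \frac{e(f;Y)(1+\delta)}{\eta\,\alpha_f(x)} < 1.
\end{equation}
Applying \cref{lem:htgrowth} produces $m \geq 1$, $C > 0$, and, for each residue $s \bmod m$, a threshold $n_0$ such that
\begin{equation}
h_H(f^{m(n'+k')+s}(x)) \geq C\,(\eta\,\alpha_f(x))^{mk'}\, h_H(f^{mn'+s}(x))
\end{equation}
for every $n' \geq n_0$ and every $k' \geq 0$, the constant $C$ being independent of $k'$.

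Next I would prove a pullback estimate: for every place $v$, every sufficiently large $K \in \Z_{\geq 1}$, and every $w \in X(\overline{K})$ for which $f^K(w)$ is defined and avoids a suitable exceptional locus,
\begin{equation}
\lambda_{Y,v}(f^K(w)) \leq (e(f;Y)(1+\delta))^{K}\, h_H(w) + B_{K,v},
\end{equation}
for a constant $B_{K,v}$ depending only on $K$, $v$, and the chosen local height data. To establish this I would resolve the indeterminacy of $f^K$ by $\pi_K \colon X_{\pi_K} \to X$, arranging that $\pi_K$ is an isomorphism over the finite set $f^{-K}(Y)$ (possible since $Y \subset X_f^{\rm back}$). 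Functoriality of local heights on $X_{\pi_K}$ applied to the scheme-theoretic pullback of $Y$ expresses $\lambda_{Y,v}\circ (f^K\circ \pi_K)$ as a weighted sum of the $\lambda_{z,v}$ with weights $e_{f^K}(z)$, $z \in f^{-K}(Y)$. A nearest-preimage reduction leaves at most one significant summand per point of $Y$; for any fixed $z$, taking a section of a sufficiently ample multiple of $H$ vanishing at $z$ gives $\lambda_{z,v}(w)\leq h_H(w) + O_{z,v}(1)$, and finiteness of $f^{-K}(Y)$ absorbs the $z$-dependence into $B_{K,v}$. The coefficient bound itself comes from \cref{def:ebar}: for $K$ large, $\max_{z \in f^{-K}(Y)} e_{f^K}(z) \leq (e(f;Y)(1+\delta))^K$.

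To conclude, given $\varepsilon > 0$, I would fix a residue $s \bmod m$, take $K = mk'$ with $k'$ to be chosen, set $w = f^{n-K}(x)$ in the pullback estimate (with $n$ along $n \equiv s \pmod m$ large), sum over $v \in S$, and divide by $h_H(f^n(x))$ using the denominator bound to obtain
\begin{equation}
\frac{\sum_{v \in S}\lambda_{Y,v}(f^n(x))}{h_H(f^n(x))} \leq \frac{|S|}{C}\,\tau^{K} + \frac{\sum_{v \in S} B_{K,v}}{h_H(f^n(x))}.
\end{equation}
Choosing $K$ large so the first term is below $\varepsilon/2$ and then letting $n \to \infty$ (so that $h_H(f^n(x)) \to \infty$ by genericness of $O_f(x)$ and Northcott) drives the second term below $\varepsilon/2$; varying $s$ through the finitely many residues covers all large $n$. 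The main obstacle is the clean proof of the pullback inequality with coefficient exactly $(e(f;Y)(1+\delta))^K$: a straightforward application of the $0$-cycle pullback formula would inflate the coefficient by $|f^{-K}(Y)|$ or by the topological degree of $f^K$, which are typically far larger than $e(f;Y)^K$, so the nearest-preimage reduction together with the finiteness properties of $X_f^{\rm back}$ is essential.
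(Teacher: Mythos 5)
Your proposal is correct and structurally parallel to the paper's argument: both use \cref{lem:htgrowth} to push the denominator down to $f^{n-K}(x)$, pull back the local height of $Y$ along $f^{K}$ (after resolving indeterminacy, which is harmless near $f^{-K}(Y)$ because $Y \subset X_f^{\rm back}$), bound the pulled-back local height by $\max_{z} e_{f^{K}}(z)$ times an ample height, and then win because $e(f;Y) < \alpha_f(x)$. The one genuine divergence is in the height comparison for the $0$-dimensional pullback: the paper reduces $Y$ to a single $K$-rational point, reduces $S$ to a singleton, and then invokes \cref{thm:roth}, which is a Roth-type estimate giving coefficient $m_X(\cdot)(2+\e)$ at the cost of a finite exceptional set; you instead keep $Y$ and $S$ general and use the elementary bound $\lambda_{z,v} \leq h_H + O_{z,v}(1)$ (a section of a very ample system through $z$, then boundedness below of local heights at the other places), together with the same nearest-preimage observation that is built into the paper's proof of \cref{thm:roth}. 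Since after reducing to one place Roth's $2+\e$ is overkill, your elementary Liouville-type bound suffices, slightly improves the constant, and avoids introducing a finite exceptional set. One small caveat worth making explicit in a full write-up: you must first replace $H$ by a very ample multiple (as the paper does) so that the section through $z$ really gives coefficient $1$ rather than $N$, and the passage from ``scheme-theoretic pullback'' to ``weighted by $e_{f^K}(z)$'' relies on the ideal containment $\m_z^{e_{f^K}(z)} \subset \I_{(f^K)^*Y,z}$ (the paper cites an external lemma for this), which should be stated rather than left implicit.
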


\begin{proof}
To prove the theorem, we may assume $S$ is a singleton: $S =\{v\}$.
Taking base extension of $K$ by a finite extension (and extending $v$ to the finite extension and then normalizing appropriately),
we may assume that the residue field of every point of $Y$ is $K$.
Since $\l_{Y,v} \leq N \sum_{y \in Y}\l_{y,v}$ for some $N \geq 1$ and $ e(f;y) \leq e(f;Y)$ for $y \in Y$,
 we may assume that $Y$ is a single rational point $y$
(with reduced structure).
We may also assume that $H$ is very ample and $h_{H} \geq 1$.

By $e(f;Y) < \alpha_{f}(x)$, we in particular have $ d_{1}(f) \geq \alpha_{f}(x) > 1$.
Let $p$ be the dynamical peak of $f$.
By \cref{thm:existence_ad_generic} and $ \alpha_{f}(x) > 1$, there is $l \in \{ 1, \dots, p\}$
such that $ \alpha_{f}(x) = \mu_{l}(f)$.
In particular, we have $ \overline{\alpha}_{f}(x) = \alpha_{f}(x) > \mu_{l+1}(f)$.
By \cref{lem:htgrowth} and the assumption $ e(f;y) < \alpha_{f}(x) = \mu_{l}(f)$, there are 
\begin{align}
\eta \in (0,1), C >0, m \in \Z_{\geq 1}, s_{0} \in \Z_{\geq 0}, n_{0} \in \Z_{\geq 0}
\end{align}
such that if we set $g = f^{m}$, then we have
\begin{align}
&e(f;y) < \eta \mu_{l}(f) = \eta \alpha_{f}(x) ; \\
&h_{H}(g^{n+k}(f^{s}(x))) \geq C (\eta \alpha_{f}(x))^{mk} h_{H}(g^{n}(f^{s}(x))) \label{ineqht+kstep}
\end{align}
for all $s \geq s_{0}$, $n \geq n_{0}$, and $k \geq 0$.
Note that we have $e(g;y) = e(f^{m};y) = e(f;y)^{m} <( \eta \alpha_{f}(x))^{m}$.

Now let us fix an arbitrary $\e>0$.
Then there is $k \geq 1$ such that
\begin{align}
\max\{ e_{g^{k}}(z) \mid z \in g^{-k}(y)  \} < \e (\eta \alpha_{f}(x))^{mk}. \label{ineqevsmu}
\end{align}
We fix such $k$.
For arbitrary $s \geq s_{0}$, write $x' = f^{s}(x)$.
Since $y \in X_{f}^{\rm back}$, for any $n \geq n_{0} + k$, we have
\begin{align}
\l_{y,v}(g^{n}(x')) \leq \l_{g^{-k}(y), v}(g^{n-k}(x')) + C_{1}
\end{align}
where $g^{-k}(y)$ is equipped with the scheme structure as a fiber product.
Here $C_{1}$ is a constant depending on everything 
(in particular $k$ and $g^{-k}(y)$) we have chosen but is independent of $s$ and $n$. 

Note that we have
\begin{align}
\m_{z}^{e_{g^{k}}(z)} \subset \I_{g^{-k}(y) , z}
\end{align}
for each $z \in g^{-k}(y)$, where $\m_{z}$ is the maximal ideal of the local ring $\O_{X,z}$
and $\I_{g^{-k}(y)}$ is the ideal sheaf of $g^{-k}(y) \subset X$ (cf.\ \cite[Lemma 4.5]{mat2023growthlocal}).
By \cref{thm:roth}, which is essentially Roth's theorem, 
\begin{align}
\l_{g^{-k}(y) , v} \leq  3  \max\{ e_{g^{k}}(z) \mid z \in g^{-k}(y)  \} h_{H}
\end{align}
on $X(K)$ except for finitely many points.
Thus if $n$ is large enough so that $g^{n-k}(x')$ is not contained in the exceptional set, we have
\begin{align}
\l_{y,v}(g^{n}(x')) \leq 3 \max\{ e_{g^{k}}(z) \mid z \in g^{-k}(y)  \} h_{H}(g^{n-k}(x')) + C_{1}.
\end{align}
Thus by \cref{ineqht+kstep} and \cref{ineqevsmu}, we get
\begin{align}
\frac{\l_{y,v}(g^{n}(x'))}{h_{H}(g^{n}(x'))} \leq \frac{3 \e}{C} + \frac{C_{1}}{h_{H}(g^{n}(x'))}
\end{align}
and hence
\begin{align}
\limsup_{n \to \infty} \frac{\l_{y,v}(g^{n}(x'))}{h_{H}(g^{n}(x'))}  \leq \frac{3 \e}{C}.
\end{align}
Since $\e$ is arbitrary, we get
\begin{align}
\lim_{n \to \infty} \frac{\l_{y,v}(g^{n}(x'))}{h_{H}(g^{n}(x'))} 
= \lim_{n \to \infty} \frac{\l_{y,v}(f^{mn+s}(x))}{h_{H}(f^{mn+s}(x))}   = 0.
\end{align}
Since $s \geq s_{0}$ is arbitrary, we get
\begin{align}
\lim_{n \to \infty} \frac{\l_{y,v}(f^{n}(x))}{h_{H}(f^{n}(x))}   = 0.
\end{align}
\end{proof}

\begin{example}
Let $X$ be a geometrically integral projective variety over a number field $K$ and $U \subset X$ be a non-empty open subset.
Let $f \colon U \longrightarrow U$ be an \'etale finite morphism (e.g.\ automorphism).
The dominant rational self-map on $X$ induced by $f$ is also denoted by $f$.
Then we have $U \subset X_{f}^{\rm back}$.
For any closed subscheme $Y \subset X$ such that $Y \subset U$, we have
$e(f;Y) = 1$.
Let $x \in U(K)$ be such that the orbit $O_{f}(x)$ is Zariski dense.
Since dynamical Mordell-Lang conjecture is proven for \'etale morphisms \cite{BGT10DMLetale}, $O_{f}(x)$ is generic.
Thus if $\dim Y = 0$ and $ \alpha_{f}(x) > 1$, then 
we can apply \cref{thm:dls-coh-hyp} to this situation.
\end{example}

\section{Banach density and dynamical Lang-Siegel problem}

%By a little more argument on multiplicity, smoothness should be easily removed.

In this section, we prove \cref{intro:thm:grwothlocalhtBD}.

\begin{definition}\label{def:banach-density}
Let $A \subset \Z_{\geq 0}$.
The (upper) Banach density of $A$ is defined as
\[
\delta(A) = \limsup_{d \to \infty} \max_{n \in \Z_{\geq 0}} \frac{\sharp(A \cap [n, n+d])}{d+1}.
\]
We say $A$ has Banach density zero if $ \delta(A) = 0$.
\end{definition}

\begin{definition}
Let $f \colon X \longrightarrow X$ be a finite flat surjective morphism on a nice variety $X$ 
defined over a field of characteristic zero.
For a closed subscheme $Y \subset X$, we define 
\begin{align}
e(f;Y) := \lim_{n \to \infty} \sup\{ e_{f^{n}}(z) \mid z \in f^{-n}(Y) \}^{1/n}.
\end{align}
See, for example, \cite[Theorem 4.8]{mat2023growthlocal} for the existence of the limit
and basic properties of this quantity.
\end{definition}
Note that in this situation, $X_{f}^{\rm back} = X$ and we actually have
\begin{align}
e(f; Y) = \max_{y \in Y} e_{f,-}(y).
\end{align}
See \cite[Theorem 4.8]{mat2023growthlocal}.

\begin{theorem}[\cref{intro:thm:grwothlocalhtBD}]\label{thm:grwothlocalhtBD1}
Let $K$ be a number field, $X$ a nice variety over $K$, $f \colon X \longrightarrow X$ a surjective morphism.
Let $S \subset M_{K}$ be a finite set of places and $h_{H}$ be a height function associated with an ample divisor $H$ on $X$.
Let $Y \subset X$ be a proper closed subscheme and fix a local height function $\{ \l_{Y,v} \}_{v \in M_{K}}$ associated with $Y$.
Let $x \in X(K)$ be a point.
Suppose
\begin{enumerate}
\item $Y$ contains no positive dimensional $f$-periodic subvariety;
\item $e(f;Y) < \alpha_{f}(x)$.
\end{enumerate}
Then if an infinite sequence $\{n_{j}\}_{j=1}^{\infty}$ satisfies
\begin{align}
\liminf_{j\to \infty} \frac{\sum_{v \in S}\l_{Y,v}(f^{n_{j}}(x))}{h_{H}(f^{n_{j}}(x))} > 0,
\end{align}
then it has Banach density zero.
\end{theorem}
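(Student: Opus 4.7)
The plan is to argue by contradiction, reducing to the $0$-dimensional Lang-Siegel situation of \cref{thm:dls-coh-hyp} via the no-periodic-subvariety hypothesis. Suppose $\{n_j\}$ has positive Banach density; after passing to a subsequence and a single place $v \in S$, we may suppose the set
\[
A := \left\{ n \in \Z_{\geq 0} \mid \l_{Y,v}(f^n(x)) \geq \delta\, h_H(f^n(x)) \right\}
\]
has positive Banach density $\delta_0$ for some $\delta > 0$, and we aim to contradict $\delta_0 > 0$.

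The key structural ingredient is the descending chain of closed subschemes $Y_0 = Y$, $Y_{k+1} = Y \cap f^{-1}(Y_k)$, which stabilizes at some forward $f$-invariant $Y_\infty \subset Y$. Every irreducible component of $Y_\infty$ is permuted by iterates of $f$ and hence fixed setwise by some power, so hypothesis (1) forces $\dim Y_\infty = 0$. Fix $N$ with $Y_N = Y_\infty$ and note that $e(f; Y_\infty) \leq e(f; Y) < \alpha_f(x)$.

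Two complementary estimates pin down $A$. First, whenever a long run $n, n+1, \dots, n+N-1$ lies entirely in $A$, the functoriality $\l_{Y,v} \circ f^i = \l_{f^{-i}(Y),v} + O(1)$ together with multiplicativity forces $f^n(x)$ to be $v$-adically close to $Y_N = Y_\infty$; the Roth-theoretic argument of \cref{thm:dls-coh-hyp} applied to the $0$-dimensional $Y_\infty$ (on a periodic irreducible component of $\overline{O_f(x)}$ restricted to which the orbit is Zariski dense, so that Lemma 2.4's genericness input can be met for a suitable iterate) shows that this ``deep'' part of $A$ has upper Banach density zero. Second, for $n \in A$ with $f^n(x)$ bounded away from $Y_\infty$, a compactness argument exploiting hypothesis (1) produces a uniform $k \in \{1, \dots, N\}$ such that $f^{n+k}(x)$ leaves a fixed neighborhood of $Y$ and hence lies outside $A$; this controls the ``shallow'' part of $A$ linearly by $A^c$.

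The main obstacle will be the combinatorial completion: the naive shallow bound $\#\text{shallow} \leq (N-1)\,\#A^c$ yields a contradiction only when $\delta_0$ is close to $1$. To handle arbitrary $\delta_0 > 0$ I would replace $f$ by a high iterate $f^m$ and exploit the flexibility in the choice of both $m$ and $\eta$ in \cref{lem:htgrowth}, iterating the deep/shallow dichotomy to successively sharpen the Banach-density upper bound on $A$ and drive it to zero. Balancing the growth of the stabilization index $N = N(m)$ against the amplified height gain $(\eta\,\alpha_f(x))^m$, so that $N(1 - \delta_0)$ is overwhelmed by the rate at which Roth-type bounds force deep elements to be sparse, is the delicate core of this amplification argument.
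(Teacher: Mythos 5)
Your proposal contains the right geometric intuition — hypothesis (1) forces the stabilizing intersection $Y_\infty = \bigcap_{k\geq 0} f^{-k}(Y)$ (or rather the chain $Y\cap f^{-1}(Y)\cap\cdots$) to be $0$-dimensional, and a run of consecutive indices in $A$ does force $v$-adic proximity to deeper intersections — but the combinatorial step you flag as the ``delicate core'' is a genuine gap, and the amplification fix you sketch is not an actual argument. The difficulty is precisely what you identify: the naive bound $\#\{\text{shallow}\} \leq (N-1)\,\#A^c$ is vacuous when $\delta_0$ is small, and replacing $f$ by $f^m$ does not obviously help because the stabilization index $N$ grows with $m$ and the deep/shallow dichotomy has to be redone at every scale. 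You never close this.

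The paper sidesteps this entirely with a cleaner inductive structure that does not require the $Y_\infty$ construction or any deep/shallow counting. It inducts on $\dim Y$; the base case $\dim Y = 0$ is a cited prior result [Theorem 1.11, mat2023growthlocal]. For the inductive step, after reducing to $Y$ irreducible, it invokes a combinatorial lemma about sets of positive Banach density — [Lemma 2.1, BGT15DMLnoe]: if $P \subset \Z_{\geq 0}$ has positive Banach density, then there exist a subset $Q \subset P$ of positive Banach density and an integer $k\geq 1$ such that $n\in Q \Rightarrow n+k\in P$. This single fixed shift $k$ is all that is needed: for $n \in Q$ both $\l_{Y,v}(f^n(x))$ and $\l_{Y,v}(f^{n+k}(x))$ are a positive fraction of the height, so $\l_{Y\cap f^{-k}(Y),v}(f^n(x)) = \min\{\l_{Y,v}, \l_{f^{-k}(Y),v}\}(f^n(x))$ is as well. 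Since $Y$ is irreducible and not $f$-periodic, $\dim(Y \cap f^{-k}(Y)) < \dim Y$, and $Y\cap f^{-k}(Y)$ still satisfies (1) and (2); the induction hypothesis then says $Q$ has Banach density zero, a contradiction. No iteration over the full chain $Y_k$, no counting of shallow indices, no dependence on $\delta_0$ being close to $1$. You should look up that lemma; it is the missing ingredient that turns your ``positive Banach density implies some structured recurrence'' intuition into a usable statement.

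A secondary concern: your appeal to \cref{thm:dls-coh-hyp} for the $0$-dimensional deep part requires the orbit to be generic, which is not among the hypotheses here. You try to route around this by passing to a periodic component of $\overline{O_f(x)}$, but this changes $\alpha_f(x)$, $e(f;Y_\infty)$, and the ambient variety in ways you do not track, and the genericness of the restricted orbit is not free (it is where the dynamical Mordell--Lang conjecture would ordinarily be needed). The paper avoids this by citing the morphism result [Theorem 1.11, mat2023growthlocal], which does not assume genericness, so this reduction never arises.
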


\begin{proof}
First, we may assume $S$ is a singleton.
Indeed, if 
\[
\liminf_{j\to \infty} \frac{\sum_{v \in S}\l_{Y,v}(f^{n_{j}}(x))}{h_{H}(f^{n_{j}}(x))} > 0,
\]
then there is $j_{0} > 0$ and a positive number $c>0$ such that
\[
\frac{\sum_{v \in S}\l_{Y,v}(f^{n_{j}}(x))}{h_{H}(f^{n_{j}}(x))} \geq c
\]
for all $j \geq j_{0}$.
For $v \in S$, set
\[
A_{v} = \left\{ n_{j}\ \middle|\ j \geq j_{0}, \frac{\l_{Y,v}(f^{n_{j}}(x))}{h_{H}(f^{n_{j}}(x))} \geq \frac{c}{\sharp S} \right\}.
\]
Then $\{n_{j}\}_{j=1}^{\infty} = \{n_{j}\}_{j=1}^{j_{0}-1} \cup \bigcup_{v \in S}A_{v}$.
The singleton case of the theorem says $A_{v}$ has Banach density zero, and hence so does $\{n_{j}\}_{j=1}^{\infty} $.

Now we assume $S$ is a singleton, say $S = \{v\}$.
We proceed by induction on $\dim Y$.
If $Y =  \emptyset$, the statement is trivial.
If $\dim Y = 0$, then the statement follows from \cite[Theorem 1.11]{mat2023growthlocal}.

Suppose $\dim Y \geq 1$.
First, note that we may assume $Y$ is irreducible.
Indeed, let $Y = Y_{1} \cup \cdots \cup Y_{r}$ be the irreducible decomposition.
We equip $Y_{i}$ the reduced structure and fix local heights $\l_{Y_{i}, v}$.
Then there are constants $a , b> 0$ such that $a \sum_{i=1}^{r}\l_{Y_{i},v} + b\geq \l_{Y,v}$.
If 
\[
\liminf_{j\to \infty} \frac{\l_{Y,v}(f^{n_{j}}(x))}{h_{H}(f^{n_{j}}(x))} > 0,
\]
then there is $j_{0} > 0$ and a positive number $c>0$ such that
\[
\frac{\l_{Y,v}(f^{n_{j}}(x))}{h_{H}(f^{n_{j}}(x))} \geq c
\]
for all $j \geq j_{0}$.
This implies 
\begin{align}
\frac{a \sum_{i=1}^{r}\l_{Y_{i},v}(f^{n_{j}}(x)) + b}{h_{H}(f^{n_{j}}(x))} \geq \frac{\l_{Y,v}(f^{n_{j}}(x))}{h_{H}(f^{n_{j}}(x))} \geq c.
\end{align}
By the assumption, we have $ \alpha_{f}(x) > 1$ and this implies $h_{H}(f^{n_{j}}(x))$ goes to infinity.
Thus there are $j_{1} \geq j_{0}$ and $c' > 0$ such that
\[
\frac{\sum_{i=1}^{r}\l_{Y_{i},v}(f^{n_{j}}(x))}{h_{H}(f^{n_{j}}(x))} \geq c'
\]
for all $j \geq j_{1}$.
Set 
\[
B_{i} = \left\{n_{j}\   \middle| \ j \geq j_{1},\ \frac{\l_{Y_{i},v}(f^{n_{j}}(x))}{h_{H}(f^{n_{j}}(x))} \geq \frac{c'}{r}   \right\}.
\]
Then $\{n_{j}\}_{j \geq j_{1}} = \bigcup_{i=1}^{r}B_{i}$. 
The theorem for each $Y_{i}$ implies each $B_{i}$ has Banach density zero.
(We can apply the theorem to $Y_{i}$ because $e(f; Y_{i}) \leq e(f ; Y)$ and $Y_{i}$ contains no positive dimensional $f$-periodic subvariety.)
Hence $\{n_{j}\}_{j\geq 1}$ is a finite union of sets with Banach density zero, which is again has Banach density zero.

Now assume $Y$ is irreducible.
By the assumption, there is $j_{0} \geq 1$ and $\eta >0$ such that
\[
\l_{Y,v}(f^{n_{j}}(x)) \geq \eta h_{H}(f^{n_{j}}(x))
\]
for all $j \geq j_{0}$.
Let us write $P = \{n_{j}\}_{j \geq j_{0}}$.
It is enough to show that $P$ has Banach density zero.
Suppose $P$ has positive Banach density.
Then by \cite[Lemma 2.1]{BGT15DMLnoe},
there are $Q \subset P$ and $k \in \Z_{>0}$ such that
\begin{align}
&\text{$Q$ has positive Banach density;}\\
& n \in Q \Longrightarrow n+k \in P.
\end{align}

By the functoriality of local height functions, there is a constant $C >0$ depending only on
$X, f, k$, and $Y$ such that $\l_{f^{-k}(Y),v} \geq \l_{Y,v} \circ f^{k}- C$.
Hence for all $n \in Q$, we have
\begin{align}
\l_{f^{-k}(Y),v}(f^{n}(x)) \geq \l_{Y,v}(f^{n+k}(x))- C \geq \eta h_{H}(f^{n+k}(x)) - C.
\end{align}
We fix a local height associated with $Y \cap f^{-k}(Y)$ so that
 $\l_{Y \cap f^{-k}(Y), v} = \min\{ \l_{Y,v}, \l_{f^{-k}(Y),v}\}$.
Then for all $n \in Q$,
\begin{align}
\l_{Y \cap f^{-k}(Y), v}(f^{n}(x)) &= \min\{ \l_{Y,v}(f^{n}(x)), \l_{f^{-k}(Y),v}(f^{n}(x))  \}\\
& \geq \min\{ \eta h_{H}(f^{n}(x)) , \eta h_{H}(f^{n+k}(x)) - C  \}.
\end{align}
Dividing by $h_{H}(f^{n}(x))$, we get
\begin{align}
\frac{\l_{Y \cap f^{-k}(Y), v}(f^{n}(x)) }{h_{H}(f^{n}(x))}\geq \min\left\{ \eta  , \eta \frac{h_{H}(f^{n+k}(x)) }{h_{H}(f^{n}(x))} - \frac{C}{h_{H}(f^{n}(x))}  \right\}.
\end{align}
Noticing that $h_{H}(f^{n+k}(x)) = h_{(f^{k})^{*}H}(f^{n}(x)) + O(1)$, $(f^{k})^{*}H$ is ample, and $h_{H}(f^{n}(x))$
goes to infinity, we can see
\[
\liminf_{n\in Q} \min\left\{ \eta  , \eta \frac{h_{H}(f^{n+k}(x)) }{h_{H}(f^{n}(x))} - \frac{C}{h_{H}(f^{n}(x))}  \right\} > 0.
\]
On the other hand, by the assumption, $Y$ is not $f$-periodic.
Since $Y$ is irreducible, we get $\dim (Y \cap f^{-k}(Y) )< \dim Y$.
As $Y \cap f^{-k}(Y) \subset Y$,  $Y \cap f^{-k}(Y)$ does not contain positive dimensional $f$-periodic subvariety.
We also have $e(f; Y \cap f^{-k}(Y) ) \leq e(f;Y) < \alpha_{f}(x)$.
Applying induction hypothesis to $Y \cap f^{-k}(Y) $, we get
\[
\liminf_{n\in Q} \frac{\l_{Y \cap f^{-k}(Y), v}(f^{n}(x)) }{h_{H}(f^{n}(x))} =0.
\]
This is contradiction.
\end{proof}

\begin{corollary}
Let $f \colon \P^{N}_{\Q} \longrightarrow \P^{N}_{\Q}$ be a surjective morphism with $d = d_{1}(f) >1$.
Let $x_{0}, \dots, x_{N}$ be the homogeneous coordinates of $\P^{N}_{\Q}$ and $H = (x_{N}=0)$ be a coordinate hyperplane.
Suppose 
\begin{enumerate}
\item $H$ contains no positive dimensional $f$-periodic subvariety;
\item $e(f;H) < d$.
\end{enumerate}
Let $a \in \P^{N}(\Q)$ be a point with infinite $f$-orbit.
Write $f^{n}(a) = (a_{0}(n): \cdots : a_{N}(n))$ where $a_{0}(n),\dots, a_{N}(n)$ are coprime integers.
Then if a sequence $\{n_{j}\}_{j=1}^{\infty}$ satisfies 
\[
\limsup_{j \to \infty} \frac{\log |a_{N}(n_{j})|}{\log \max\{ |a_{0}(n_{j})|,\dots, |a_{N}(n_{j})| \}} < 1,
\]
then $\{n_{j}\}_{j=1}^{\infty}$ has Banach density zero.
Here $|\ |$ is the usual absolute value on $\Q$.
\end{corollary}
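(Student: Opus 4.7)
The plan is to reduce the corollary directly to \cref{thm:grwothlocalhtBD1} by translating the arithmetic condition into a lower bound on the ratio of local to global height at the archimedean place.

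First, I would unwind the height formalism on $\P^{N}_{\Q}$. Writing $a_{0}(n),\dots,a_{N}(n)$ for coprime integer representatives of $f^{n}(a)$, one has $h_{H}(f^{n}(a))=\log\max_{i}|a_{i}(n)|+O(1)$ for $H=(x_{N}=0)$, and for the natural archimedean local height $\lambda_{H,\infty}(f^{n}(a))=\log\max_{i}|a_{i}(n)|-\log|a_{N}(n)|+O(1)$ whenever $a_{N}(n)\neq 0$. Since $h_{H}(f^{n}(a))\to\infty$, this gives
\begin{align}
\frac{\lambda_{H,\infty}(f^{n}(a))}{h_{H}(f^{n}(a))}=1-\frac{\log|a_{N}(n)|}{\log\max_{i}|a_{i}(n)|}+o(1).
\end{align}
Indices $n_{j}$ with $a_{N}(n_{j})=0$ satisfy the hypothesis of the corollary vacuously (with the convention $\log 0=-\infty$) and produce $\lambda_{H,\infty}=+\infty$, so they do not obstruct the reduction.

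Next, I would verify the hypotheses of \cref{thm:grwothlocalhtBD1}. Since $f$ is a morphism of $\P^{N}$ of degree $d$, the functoriality $h_{H}\circ f=d\cdot h_{H}+O(1)$ combined with $a$ having infinite orbit (hence non-preperiodic) yields a strictly positive Call--Silverman canonical height, and therefore $\alpha_{f}(a)=d$. The two assumptions in the corollary then match those of \cref{thm:grwothlocalhtBD1} with $Y=H$: $H$ contains no positive-dimensional $f$-periodic subvariety, and $e(f;H)<d=\alpha_{f}(a)$.

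Finally, the hypothesis $\limsup_{j}\log|a_{N}(n_{j})|/\log\max_{i}|a_{i}(n_{j})|<1$ together with the identity above gives $\liminf_{j}\lambda_{H,\infty}(f^{n_{j}}(a))/h_{H}(f^{n_{j}}(a))>0$, and \cref{thm:grwothlocalhtBD1} applied with $S=\{\infty\}$ and $Y=H$ then concludes that $\{n_{j}\}_{j=1}^{\infty}$ has Banach density zero. The only mildly delicate point is the careful identification of $\lambda_{H,\infty}$ with the coordinate-based expression $\log(\max_{i}|a_{i}|/|a_{N}|)$ under the paper's $M_{K}$-normalization and the verification that $\alpha_{f}(a)=d$ for every point with infinite orbit; once these are in place, the corollary follows as a direct reformulation of the preceding theorem.
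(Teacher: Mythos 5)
Your proof is correct and follows essentially the same route as the paper: choose the explicit archimedean local and global heights on $\P^{N}_{\Q}$, rewrite the coordinate condition as $\liminf_{j}\lambda_{H,\infty}(f^{n_{j}}(a))/h_{H}(f^{n_{j}}(a))>0$, note $\alpha_{f}(a)=d$ for a point with infinite orbit, and invoke \cref{thm:grwothlocalhtBD1} with $S=\{\infty\}$ and $Y=H$. The paper simply fixes exact representatives of the local and global heights (so the $O(1)$ and $o(1)$ error terms you carry disappear), but this is an inessential difference.
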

\begin{proof}
Let $\infty \in M_{\Q}$ be the infinite place.
Let us fix local and global height functions as follows:
for coprime integers $a_{0},\dots,a_{N}$,
\begin{align}
\l_{H,\infty}(a_{0}:\cdots :a_{N}) &= \log \frac{ \max\{ |a_{0}|,\dots, |a_{N}| \}}{|a_{N}|}\\
h_{H}(a_{0}:\cdots :a_{N}) &= \log \max\{ |a_{0}|,\dots, |a_{N}| \}.
\end{align}

Now, first note that we have $ \alpha_{f}(a) = d  > e(f;H)$ since $a$ has infinite $f$-orbit.
We have
\begin{align}
\frac{\l_{H,\infty}(f^{n}(a))}{h_{H}(f^{n}(a))} = 1 - \frac{\log |a_{N}(n)|}{ \log \max\{ |a_{0}(n)|,\dots, |a_{N}(n)| \}}.
\end{align}
Hence 
\begin{align}
\limsup_{j \to \infty} \frac{\log |a_{N}(n_{j})|}{\log \max\{ |a_{0}(n_{j})|,\dots, |a_{N}(n_{j})| \}} < 1
\end{align}
is equivalent to
\begin{align}
\liminf_{j \to \infty} \frac{\l_{H,\infty}(f^{n_{j}}(a))}{h_{H}(f^{n_{j}}(a))}  > 0.
\end{align}
By \cref{thm:grwothlocalhtBD1}, we get the statement.
\end{proof}

\appendix

\section{Roth's theorem}

In this appendix, we reformulate Roth's theorem using local height function associated with subschemes.
For a $0$-dimensional closed subscheme $Y \subset X$ of an algebraic scheme $X$, we set 
\begin{align}
m_{X}(Y) = \max\big\{ \min\{i \mid \m_{X,y}^{i} \subset \I_{Y,y} \} \mid y \in Y\big\}
\end{align}
where $\m_{X, y}$ is the maximal ideal of the local ring $\O_{X, y}$
and $\I_{Y}$ is the ideal sheaf of $Y \subset X$.

\begin{theorem}[Roth's theorem]\label{thm:roth}
Let $K$ be a number field.
Let $X$ be a reduced projective scheme over $K$ such that every irreducible component is positive dimensional,
$Y \subset X$ a closed subscheme with $\dim Y=0$.
Let $H$ be a very ample divisor on $X$.
Fix local height function $\{ \l_{Y,v} \}_{v \in M_{K}}$ associated with $Y$,
and Weil height function $h_{H}$ associated with $H$.
Then for any finite subset $S \subset M_{K}$ and $\e>0$, there is a finite subset $Z \subset X(K)$
such that 
\begin{align}
\sum_{v \in S} \l_{Y,v}(x) \leq m_{X}(Y) (2+\e) h_{H}(x)
\end{align}
for all $x \in X(K) \setminus Z$.
\end{theorem}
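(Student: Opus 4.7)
The plan is a two-stage reduction to a classical Roth-type bound. First, I would reduce purely scheme-theoretically to the case $Y$ is reduced; then I would invoke the classical Roth's theorem for a reduced $0$-dimensional subscheme (ultimately via Schmidt's subspace theorem on $\P^1$).

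For the first stage, set $m := m_X(Y)$. By the very definition of $m_X(Y)$, at every point $y$ in the (finite) support of $Y$ we have $\m_{X, y}^{m} \subset \I_{Y, y}$, which globalizes to the inclusion of ideal sheaves $\I_{Y_{\mathrm{red}}}^{m} \subset \I_{Y}$ on $X$. Letting $Y^{(m)}$ denote the closed subscheme cut out by $\I_{Y_{\mathrm{red}}}^{m}$, this says $Y \subset Y^{(m)}$. I would then invoke the standard functoriality of local height functions (monotonicity under inclusion of subschemes and additivity under products of ideal sheaves) to obtain
\begin{align}
\l_{Y, v}(x) \leq \l_{Y^{(m)}, v}(x) + O(1) = m \cdot \l_{Y_{\mathrm{red}}, v}(x) + O(1)
\end{align}
for every $v \in M_K$, with $O(1)$ independent of $x \in X(K) \setminus Y$. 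Summing over $v \in S$ reduces the claim to the special case $m_X(Y) = 1$, i.e.\ $Y$ reduced, with the bound $(2+\e) h_H(x)$.

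For the second stage, with $Y$ reduced, I would pass to a finite extension $K'/K$ so that $Y_{K'}$ splits as $\{y_1\} \sqcup \cdots \sqcup \{y_r\}$ with $y_j \in X(K')$ and $\l_{Y, v'}(x) = \max_j \l_{y_j, v'}(x) + O(1)$ for $v' \in M_{K'}$ lying over $v \in M_K$. The closed embedding $X \hookrightarrow \P^N_{K'}$ induced by the very ample $H$ identifies $h_H$ and the various $\l_{y_j, v}$, up to $O(1)$, with the restrictions of the standard height and local heights on $\P^N_{K'}$. The hypothesis that every irreducible component of $X$ is positive-dimensional lets me cut by a curve through each $y_j$ and ultimately reduce to the corresponding bound on $\P^1_{K'}$.

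The principal obstacle is the sharp exponent $(2+\e)$ in the reduced case when $r > 1$: applying classical Roth once per support point $y_j$ only yields the weaker bound $r(2+\e) h_H$. To circumvent this, I would invoke Schmidt's subspace theorem on $\P^1$. Concretely, for each function $\phi : S \to \{1, \dots, r\}$ (there are only finitely many), the pair of linearly independent linear forms $T - y_{\phi(v)}$ and $T - y_{\phi'(v)}$ (for any choice $\phi'(v) \neq \phi(v)$) produces via Schmidt the estimate $\sum_{v \in S} (\l_{y_{\phi(v)}, v}(x) + \l_{y_{\phi'(v)}, v}(x)) \leq (2+\e) h(x) + O(1)$ outside a finite exceptional set $Z_\phi \subset X(K)$. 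Restricting $x$ to the locus where $y_{\phi(v)}$ realizes the maximum $\max_j \l_{y_j, v}(x)$ at every $v \in S$ and using nonnegativity of the second term then gives $\sum_{v \in S} \l_{Y, v}(x) \leq (2+\e) h_H(x) + O(1)$ for $x$ outside $Z_\phi$. Taking $Z = \bigcup_\phi Z_\phi$, a finite union of finite sets, completes the proof.
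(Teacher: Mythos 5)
The overall architecture of your proposal matches the paper's: reduce to $Y$ reduced via $m_X(Y)$ and multiplicativity of local heights, pass to a finite extension so the support of $Y$ consists of rational points, and then apply a Roth/Schmidt-type simultaneous approximation estimate on $\P^1$. (The passage from $\l_{Y,v}$ to $\max_j \l_{y_j,v}$ that you assert without justification is the one point the paper handles carefully, via a diagonal argument on $X\times X$ establishing $\min\{\l_{y_1,v},\l_{y_2,v}\} = O(1)$; this is standard, so glossing over it is not a serious omission.) Your invocation of Schmidt on $\P^1$ with a different target at each place is the same tool the paper uses in the guise of the Roth statement from \cite[Theorem 6.4.1]{BG06}, and your "finitely many $\phi : S \to \{1,\dots,r\}$" trick is an adequate substitute for the paper's fixing of a single $y_v$ per $v$.

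The genuine gap is the reduction from $\P^N$ to $\P^1$. You say the positive-dimensionality hypothesis "lets me cut by a curve through each $y_j$ and ultimately reduce to the corresponding bound on $\P^1_{K'}$," but cutting $X$ (or $\P^N$) by a curve through the $y_j$ only produces a bound for $x$ \emph{on that curve}. The desired estimate is for arbitrary $x \in X(K)$, which need not lie on any curve you fix in advance, so no curve-cutting argument can close the gap. The correct operation is a \emph{projection}, not a section: after embedding via $H$ and arranging $y_v \in D_+(t_0)$, the paper uses the coordinate projections $p_i \colon D_+(t_0) \to \A^1$ to bound
\begin{align}
\l_{y_v,v}(x) \le \min_{1\le i\le N}\,\log\frac{1}{|y_v^{(i)}-x^{(i)}|_v} + O(1),
\end{align}
sums over $v\in S$, pulls the $\min_i$ outside the sum, and applies the $\P^1$ estimate coordinatewise, noting $h(x^{(i)}) \le h_{\P^N}(x)$; the exceptional set is then $\bigcap_i p_i^{-1}(W_i)$, which is finite because each $W_i$ is. Your proposal needs this projection step in place of curve-cutting; with it, the rest of the argument (Schmidt on $\P^1$, the $\phi$-decomposition, the finite union of exceptional sets) goes through as you describe.
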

\begin{proof}
Since
\begin{align}
\sum_{v \in S} \l_{Y,v} \leq m_{X}(Y) \sum_{v \in S} \l_{Y_{\rm red},v} + O(1),
\end{align}
we may assume $Y$ is reduced.
Let $K \subset L$ be a finite extension such that all the residue fields of $Y_{L}$ are $L$.
By replacing $K$ with $L$, $Y$ with $Y_{L}$, $X$ with $X_{L}$, $H$ with $H_{L}$,
and $S$ with $\left\{ w \in M_{L} \ \middle|\  w | v\right\}$, we may assume the  residue field of every point of $Y$ is $K$.

Let $ \Delta \subset X \times X$ be the diagonal.
For any $v \in M_{K}$, $y_{1}\neq y_{2} \in Y$, and $x \in X( K)$, we have
\begin{align}
\min\{ \l_{y_{1}, v}(x), \l_{y_{2}, v}(x) \} &\leq \min\{ \l_{ \Delta, v}(y_{1}, x) , \l_{ \Delta, v}(y_{2}, x) \} + \g_{y_{1}, y_{2}}(v)\\
& \leq \l_{\pr_{13}^{-1}( \Delta) \cap \pr_{23}^{-1}( \Delta) , v}(y_{1}, y_{2}, x) + \g'_{y_{1},y_{2}}(v)\\
& \leq \l_{\pr_{12}^{-1}( \Delta), v} (y_{1}, y_{2}, x) + \g''_{y_{1},y_{2}}(v)\\
& \leq \l_{\Delta, v} (y_{1}, y_{2}) + \g'''_{y_{1},y_{2}}(v)
\end{align}
where $\g_{y_{1}, y_{2}}(v)$'s are $M_{K}$-constants, i.e.\ constants such that $=0$ for all but finitely many $v \in M_{K}$.  
Thus
\begin{align}
\sum_{v \in S}\l_{Y,v}(x) &=  \sum_{v \in S} \sum_{y \in Y}\l_{y,v}(x) + O(1) \\
& \leq \sum_{v \in S} \max\{ \l_{y, v}(x) \mid y \in Y\} + O(1)
\end{align}
where the implicit constants depend at most on $Y$, $X$, and the choice of all the local height functions appeared so far.
Therefore it is enough to prove the following.
Let $S \subset M_{K}$ be a  finite subset.  Suppose $y_{v} \in X(K)$ are given  for each $v \in S$.
Then 
\begin{align}
\sum_{v \in S} \l_{y_{v}, v}(x) \leq (2 + \e)h_{H}(x)
\end{align}
for all but finitely many $x \in X(K)$.
By taking closed immersion $X \hookrightarrow \P^{N}_{K}$ defined by
general members of the linear system $|H|$ and replacing $X$ with $\P^{N}_{K}$, 
we may assume $X = \P^{N}_{K}$, $h_{H} = h_{\P^{N}}$, the naive height on $\P^{N}$, and $y_{v} \in D_{+}(t_{0})$ for all $v \in S$.
Here $t_{0}, \dots, t_{N}$ are the homogeneous coordinates of $\P^{N}_{K}$.

Let $p_{i} \colon D_{+}(t_{0}) = \Spec K[ \frac{t_{1}}{t_{0}}, \dots, \frac{t_{N}}{t_{0}} ] \longrightarrow \Spec K[ \frac{t_{i}}{t_{0}}]$
be the $i$-th projection.
Let $y_{v} = (y_{v}^{(1)}, \dots, y_{v}^{(N)})$ be the affine coordinates of $y_{v}$ as a $K$-point of $D_{+}(t_{0})$,
i.e.\ $y_{v}^{(i)} = p_{i}(y_{v})$.
Then take a bounded neighborhood $B_{v} \subset D_{+}(t_{0})(K)$ of $y_{v}$ with respect to $v$ 
and constant $C \geq 0$
so that
\begin{align}
\l_{y_{v},v}(x) \leq \min_{1 \leq i \leq N}\bigg\{ \log \frac{1}{ |y_{v}^{(i)} - x^{(i)}  |_{v} } \bigg\} + C
\end{align}
for $x \in B_{v} \setminus \{ y_{v}\}$, where $x^{(i)}$ are affine coordinates of $x$.
Note that $\l_{y_{v} ,v}$ is bounded on $\P^{N}(K) \setminus B_{v}$. 
We set 
\begin{align}
C' = \max_{v \in S} \sup_{x \in \P^{N}(K) \setminus B_{v}} \l_{y_{v} , v}(x).
\end{align}

Thus
\begin{align}
\sum_{v \in S} \l_{y_{v}, v}(x)
 \leq  
\min_{1 \leq i \leq N} \bigg\{ \sum_{\substack{v \in S \\ x \in B_{v}}} \log \frac{1}{ |y_{v}^{(i)} - x^{(i)}  |_{v} } \bigg\} + \sharp S C'.
\end{align}
By Roth's theorem (cf.\ \cite[Theorem 6.4.1]{BG06}), for any $\e > 0$, there are finite sets $W_{i} \subset K$ such that
\begin{align}
\sum_{\substack{v \in S \\ x \in B_{v}}} \log \frac{1}{ |y_{v}^{(i)} - x^{(i)}  |_{v} } \leq (2+ \e) h(x^{(i)})
\end{align}
if $x^{(i)} \notin W_{i}$. Here $h(x^{(i)})$ is the logarithmic Weil height of algebraic number $x^{(i)}$.
Since 
\begin{align}
h(x^{(i)})& = \sum_{v \in M_{K}} \log \max\{ 1, |x^{(i)}|_{v}\} \\
& \leq \sum_{v \in M_{K}} \log \max\{ 1, |x^{(1)}|_{v}, \dots, |x^{(N)}|_{v}\} = h_{\P^{N}}(x),
\end{align}
we get
\begin{align}
\sum_{v \in S} \l_{y_{v}, v}(x) \leq (2 + \e) h_{\P^{N}}(x) + \sharp S C'
\end{align}
for $x \in \P^{N}(K) \setminus \bigcap_{i=1}^{N}p_{i}^{-1}(W_{i})$.
Adjusting $\e$ and enlarging the exceptional set, we can remove the constant $\sharp S C'$ and we are done.

\end{proof}

\section{Backward limit of upper semicontinuous functions under rational maps}\label{section:uscbackconvergence}

In this section, we define the locus where the backward iteration behave well
and prove the existence of certain limit along backward orbits, which directly implies the existence of the limit
defining ``$e(f;Y)$".
For a self-map on Noetherian topological space, what we want is exactly the one established in \cite{analyticmult-dinh}.
As we need the same statement for a slightly generalized setting where we can only consider the backward iteration,
we include here the self-contained proof.
The argument is exactly the same with that in \cite{analyticmult-dinh}, 
but with slight changes in basic settings.

\subsection{Finite loci of rational map}\label{subsec:finlociratmap}

Let $K$ be a field, $X$ a projective variety over $K$, and $f \colon X \dashrightarrow X$ a dominant rational map.

\begin{definition}\label{def:maxopenfinite}
Let
\begin{equation}
\begin{tikzcd}
\G_{f} \arrow[d,"p_{1}",swap] \arrow[dr, "p_{2}"]& \\
X \arrow[r, "f", swap,dashed]& X
\end{tikzcd}
\end{equation}
where $\G_{f}$ is the graph of $f$.
Let $U \subset X$ be the largest open subset such that
\begin{align}
&p_{2} \colon p_{2}^{-1}(U) \longrightarrow U \ \text{is finite and}\\
&p_{2}^{-1}(U) \cap p_{1}^{-1}(I_{f}) =  \emptyset.
\end{align}
We call $U$ the largest open subset over which $f$ is finite.
\end{definition}
This definition is justified by the following lemma.

\begin{lemma}\label{lem:Ujustification}
Let $U \subset X$ be as in \cref{def:maxopenfinite}.
If $V,W \subset X$ are open subset such that $V \cap I_{f} =  \emptyset$ and $f|_{V} \colon V \longrightarrow W$ is finite,
then $W \subset U$.
\end{lemma}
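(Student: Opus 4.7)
The plan is to verify that $W$ satisfies the two defining properties of $U$, namely that $p_2\colon p_2^{-1}(W)\to W$ is finite and that $p_2^{-1}(W)\cap p_1^{-1}(I_f)=\emptyset$. The whole proof will reduce to the identification $p_2^{-1}(W)=p_1^{-1}(V)$, after which both conditions become immediate.

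First I would reduce to $V\neq\emptyset$ (if $V=\emptyset$ one can take $W=\emptyset$ and the statement is trivial). Since $V\cap I_f=\emptyset$, the projection $p_1$ restricts to an isomorphism $p_1^{-1}(V)\xrightarrow{\sim} V$, and under this isomorphism $p_2|_{p_1^{-1}(V)}$ is identified with $f|_V$. In particular $p_2(p_1^{-1}(V))=f(V)\subset W$, so $p_1^{-1}(V)\subset p_2^{-1}(W)$, and the map $p_2\colon p_1^{-1}(V)\to W$ is finite.

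The key step is the identification $p_2^{-1}(W)=p_1^{-1}(V)$. Because $X$ is a variety, $\Gamma_f$ (the closure of the graph of $f$ over its domain of definition) is irreducible, hence every non-empty open subset of it is irreducible and in particular connected; so the non-empty open $p_2^{-1}(W)\supset p_1^{-1}(V)$ is connected. The subset $p_1^{-1}(V)$ is clearly open in $p_2^{-1}(W)$. It is also closed there: the morphism $p_1^{-1}(V)\to W$ is finite, hence proper, and it factors as $p_1^{-1}(V)\hookrightarrow p_2^{-1}(W)\to W$ where the second arrow is separated, so by the standard cancellation of properness the inclusion is proper, i.e.\ a closed immersion. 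Being clopen in a connected space, $p_1^{-1}(V)=p_2^{-1}(W)$.

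With this equality in hand, finiteness of $p_2\colon p_2^{-1}(W)\to W$ is just what was established for $p_1^{-1}(V)$, and
\begin{equation*}
p_2^{-1}(W)\cap p_1^{-1}(I_f)=p_1^{-1}(V)\cap p_1^{-1}(I_f)=p_1^{-1}(V\cap I_f)=\emptyset.
\end{equation*}
By the maximality that defines $U$ in \cref{def:maxopenfinite}, it follows that $W\subset U$. The main obstacle is the clopen-plus-irreducibility argument identifying $p_2^{-1}(W)$ with $p_1^{-1}(V)$; the rest is formal. Care is needed to invoke the cancellation lemma for proper morphisms and to confirm irreducibility of $\Gamma_f$ (which uses that $X$ is a variety in the paper's convention).
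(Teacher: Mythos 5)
Your proof is correct and follows essentially the same route as the paper's: reduce to the identification $p_1^{-1}(V)=p_2^{-1}(W)$ by showing the open immersion $p_1^{-1}(V)\hookrightarrow p_2^{-1}(W)$ is also closed (via cancellation of properness through the separated $\beta$) and then invoking irreducibility of $\Gamma_f$. The paper compresses this to ``since $f|_V\circ\alpha$ and $\beta$ are proper, $p_1^{-1}(V)=p_2^{-1}(W)$''; you have simply supplied the details that sentence leaves implicit.
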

\begin{proof}
Consider the following diagram:
\begin{equation}
\begin{tikzcd}
&p_{2}^{-1}(W) \arrow[d, hook] \arrow[rddd, bend left=70, " \beta"] & \\
p_{1}^{-1}(V) \arrow[rdd, " \alpha"] \arrow[r, hook] \arrow[ru, hook] & \G_{f} \arrow[d,"p_{1}",swap] \arrow[dr, "p_{2}"]& \\
&X \arrow[r, "f", swap,dashed]& X\\
&V \arrow[u, phantom, "\cup"]  \arrow[r,  "f|_{V}"]  & W \arrow[u, phantom, "\cup"]  
\end{tikzcd}
\end{equation}
Since $f|_{V} \circ \alpha$ and $ \beta$ are proper, $p_{1}^{-1}(V) = p_{2}^{-1}(W)$.
As $V \cap I_{f} =  \emptyset$, $ \alpha$ is isomorphism and hence $ \beta$ is finite and $p_{2}^{-1}(W) \cap p_{1}^{-1}(I_{f})=  \emptyset$.
Thus $W \subset U$.
\end{proof}
When we say $f$ is finite over an open subset $W \subset X$, this simply means $W \subset U$.

We introduce open subsets $U_{n} \subset X$ over which $f$ is repeatedly finite at least $n$ times.
\begin{definition}
We define a sequence of open subsets $U_{n} \subset X$, $n=0,1,2, \dots$ inductively as follows.
Let $U \subset X$ be as in \cref{def:maxopenfinite}.
We set $U_{0} = X$.
Let $n \geq 0$ and suppose we have constructed $U_{n}$  so that
$f^{n}$ is finite over $U_{n}$.
Then define 
\begin{align}
U_{n+1} = U_{n} \setminus f^{n}( f^{-n}(U_{n}) \setminus U ).
\end{align}
The following diagram summarizes the situation.
\begin{equation}
\begin{tikzcd}
X\arrow[r, "f", dashed]  &X \arrow[r, "f^{n}", dashed] & X \\
&f^{-n}(U_{n}) \arrow[r] \arrow[u, phantom, "\cup"] & U_{n} \arrow[u, phantom, "\cup"] \\
&f^{-n}(U_{n}) \cap U \arrow[u, phantom, "\cup"] & U_{n+1} \arrow[u, phantom, "\cup"] \\
f^{-n-1}(U_{n+1}) \arrow[r] \arrow[uuu, hook] &f^{-n}(U_{n+1}) \arrow[u, phantom, "\cup"] \arrow[ru] &
\end{tikzcd}
\end{equation}
\end{definition}
By the construction, for every $n \geq 0$, we have
\begin{equation}
\begin{tikzcd}
X \arrow[r, "f", dashed] & X \arrow[r, "f", dashed]  & \cdots \arrow[r, "f", dashed]  & X \arrow[r, "f", dashed]  & X \\
f^{-n}(U_{n}) \arrow[u, phantom, "\subset", sloped] \arrow[r, "f"] & f^{-(n-1)}(U_{n}) \arrow[u, phantom, "\subset", sloped]  \arrow[r, "f"]  & \cdots  \arrow[r, "f"] & f^{-1}(U_{n}) \arrow[u, phantom, "\subset", sloped] \arrow[r, "f"] & U_{n}  \arrow[u, phantom, "\subset", sloped] \\
& U \arrow[u,phantom, "\supset", sloped]  && U \arrow[u,phantom, "\supset", sloped]  & U \arrow[u,phantom, "\supset", sloped]  
\end{tikzcd}
\end{equation}
Note that all the morphisms in the second row are finite.
Actually, $U_{n}$ is the largest open subset with this property.

\begin{lemma}\label{lem:maximalityUn}
Let $W_{0}, \dots, W_{n} \subset X$ be open subsets such that
$W_{i} \cap I_{f} =  \emptyset$ and $f|_{W_{i}} \colon W_{i} \longrightarrow W_{i-1}$ are finite for $i = 1, \dots , n$, i.e.
\begin{equation}
\begin{tikzcd}
X \arrow[r, "f", dashed] & X \arrow[r, "f", dashed]  & \cdots \arrow[r, "f", dashed]  & X \arrow[r, "f", dashed]  & X \\
W_{n} \arrow[u, phantom, "\subset", sloped] \arrow[r, "\text{finite}"] & W_{n-1}\arrow[u, phantom, "\subset", sloped]  \arrow[r, "\text{finite}"]  & \cdots  \arrow[r, "\text{finite}"] & W_{1}\arrow[u, phantom, "\subset", sloped] \arrow[r, "\text{finite}"] & W_{0}  \arrow[u, phantom, "\subset", sloped] \\
\end{tikzcd}
\end{equation}
Then we have $W_{0} \subset U_{n}$.
\end{lemma}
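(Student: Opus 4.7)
I would prove the lemma by induction on $n$. The base case $n=0$ is immediate, since $U_0 = X$ by construction, so any $W_0 \subset X = U_0$.

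For the inductive step, suppose the lemma holds for chains of length $n-1$, and let $W_0, W_1, \dots, W_n$ be a chain of length $n$. Applying the induction hypothesis to the truncated sub-chain $W_0, \dots, W_{n-1}$ gives $W_0 \subset U_{n-1}$. In view of the recursive definition $U_n = U_{n-1} \setminus f^{n-1}(f^{-(n-1)}(U_{n-1}) \setminus U)$, it remains to verify that $W_0 \cap f^{n-1}(f^{-(n-1)}(U_{n-1}) \setminus U) = \emptyset$. Unpacking, this amounts to showing: for every $y \in W_0$ and every $z \in f^{-(n-1)}(U_{n-1})$ with $f^{n-1}(z) = y$, we have $z \in U$.

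The chain produces the expected preimages inside $U$ directly: the composition $f|_{W_1} \circ \cdots \circ f|_{W_{n-1}} \colon W_{n-1} \to W_0$ is finite, and applying \cref{lem:Ujustification} to the final step $f|_{W_n} \colon W_n \to W_{n-1}$ yields $W_{n-1} \subset U$. The open immersion $W_{n-1} \hookrightarrow f^{-(n-1)}(W_0)$ composed with the finite morphism $f^{-(n-1)}(W_0) \to W_0$ is proper, so by the standard principle that a proper map that is also an open immersion must be an isomorphism onto a clopen subscheme, $W_{n-1}$ is a union of connected components of the max preimage $f^{-(n-1)}(W_0)$. In particular, the fiber of $f^{n-1}|_{W_{n-1}}$ over $y$ is automatically contained in $U$, and the only real question is whether the remaining connected components of $f^{-(n-1)}(W_0)$ can avoid $U$.

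To rule out anomalous components, I would apply the induction hypothesis also to the shifted sub-chain $W_1, \dots, W_n$ of length $n-1$, obtaining $W_1 \subset U_{n-1}$, and then factor $f^{n-1} = f \circ f^{n-2}$. For an anomalous preimage $z \notin W_{n-1}$ of $y$ with $f^{n-1}(z) = y$, the point $w := f(z)$ lies in the max preimage of $W_0$ under $f^{n-2}$, so the level-$(n-1)$ analysis applied to this $w$ (using that $W_1 \subset U_{n-1}$ controls the $f$-preimages of $W_0$) should force either $z \in U$ or a contradiction with $W_0 \subset U_{n-1}$ or $W_1 \subset U_{n-1}$. The main obstacle is executing this extra-component step cleanly, since the max preimage under $f^{n-1}$ and the iterated max preimage under $f$ need not coincide a priori; matching them requires the standard properness argument that, for a composition $g \circ f$ proper with $g$ separated, $f$ is proper, applied along the factorization $f^{n-1} = f \circ f^{n-2}$, combined with the inductive structure of the $U_n$.
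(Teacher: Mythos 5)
Your proposal follows the same inductive skeleton as the paper's proof, and the step where you observe that $W_{n-1}\hookrightarrow f^{-(n-1)}(W_0)$ is a proper open immersion, hence clopen, is exactly the right move. But you then get stuck on whether $f^{-(n-1)}(W_0)$ can have additional connected components disjoint from $W_{n-1}$, and the long digression via the shifted sub-chain $W_1,\dots,W_n$ and the factorization $f^{n-1}=f\circ f^{n-2}$ does not resolve it; you concede as much at the end. The missing observation is elementary: in this paper a ``variety'' is irreducible by convention, and $f^{-(n-1)}(W_0)$ is an open subset of the irreducible $X$ (it is the preimage of the open $W_0\subset U_{n-1}$ under the finite morphism $f^{-(n-1)}(U_{n-1})\to U_{n-1}$, hence open in $f^{-(n-1)}(U_{n-1})$, hence open in $X$). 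A nonempty open subset of an irreducible space is irreducible, in particular connected, so a nonempty clopen subset of it is the whole thing. Thus $W_{n-1}=f^{-(n-1)}(W_0)$ with no case analysis needed, and the ``anomalous preimage'' $z$ you worry about simply does not exist. This is what the paper asserts in one line (``since $f^n\colon f^{-n}(U_n)\to U_n$ and $W_n\to W_0$ are finite, we have $W_n=f^{-n}(W_0)$''). Once that equality is in hand, every $f^{n-1}$-preimage of a point of $W_0$ lies in $W_{n-1}\subset U$ (the containment $W_{n-1}\subset U$ coming from \cref{lem:Ujustification} applied to $f|_{W_n}\colon W_n\to W_{n-1}$, as you noted), and $W_0\subset U_{n-1}\setminus f^{n-1}\bigl(f^{-(n-1)}(U_{n-1})\setminus U\bigr)=U_n$. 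As written, your argument does not reach the conclusion.
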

\begin{proof}
We prove by induction on $n$.
If $n=0$, $U_{0}= X$ and the statement is trivial.
Let $n \geq 0$ and suppose the statement holds for $n$.
Then we have the following diagram.
\begin{equation}
\begin{tikzcd}
X  \arrow[r, "f", dashed]  & X \arrow[r, "f^{n}", dashed]  & X \\
& f^{-n}(U_{n}) \arrow[u, phantom, "\subset", sloped] \arrow[r, "f^{n}"]  & U_{n} \arrow[u, phantom, "\subset", sloped] \\
W_{n+1} \arrow[r] \arrow[uu, hook]& W_{n} \arrow[u, phantom, "\subset", sloped] \arrow[r] & W_{0} \arrow[u, phantom, "\subset", sloped]
\end{tikzcd}
\end{equation}
By \cref{lem:Ujustification}, $f$ is finite over $W_{n}$, that is $W_{n} \subset U$.
Also, since $f^{n} \colon f^{-n}(U_{n}) \longrightarrow U_{n}$ and $W_{n} \longrightarrow W_{0}$
are finite, we have $W_{n} = f^{-n}(W_{0})$.
Thus 
\begin{align}
W_{0} \subset U_{n} \setminus f^{n}(f^{-n}(U_{n}) \setminus U) = U_{n+1}.
\end{align}
\end{proof}

\begin{corollary}
We have
\begin{align}
&X = U_{0} \supset U_{1} \supset U_{2} \cdots\\
&X = f^{-0}(U_{0}) \supset f^{-1}(U_{1}) \supset f^{-2}(U_{2}) \cdots
\end{align}

\end{corollary}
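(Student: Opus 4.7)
The first chain $U_0 \supset U_1 \supset U_2 \supset \cdots$ is immediate from the inductive definition, since $U_{n+1} = U_n \setminus f^n(f^{-n}(U_n) \setminus U)$ is explicitly a subset of $U_n$; no further argument is needed. The content of the corollary is the second chain, i.e.\ the inclusion $f^{-(n+1)}(U_{n+1}) \subset f^{-n}(U_n)$ for every $n \geq 0$.

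My plan is to apply \cref{lem:maximalityUn} to a suitably shifted sub-chain of the diagram provided by the construction. By construction, one has the chain of finite morphisms
\[
f^{-(n+1)}(U_{n+1}) \xrightarrow{f} f^{-n}(U_{n+1}) \xrightarrow{f} \cdots \xrightarrow{f} f^{-1}(U_{n+1}) \xrightarrow{f} U_{n+1},
\]
and each source is disjoint from $I_f$, since each such source sits inside the preimage of $U_{n+1}$ under a nontrivial iterate of $f$. Dropping the final target $U_{n+1}$ leaves a chain of length $n$ whose ``bottom'' is $f^{-1}(U_{n+1})$, so \cref{lem:maximalityUn} yields $f^{-1}(U_{n+1}) \subset U_n$.

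With this in hand the conclusion is a short diagram chase. For any $x \in f^{-(n+1)}(U_{n+1})$, iterating $f$ along the chain above places $f^n(x) \in f^{-1}(U_{n+1}) \subset U_n$, and $x \notin I_{f^{n+1}}$ forces $x \notin I_{f^n}$; hence $x \in f^{-n}(U_n)$, as required.

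The one point that requires care is the indexing shift: applying \cref{lem:maximalityUn} directly to the full chain of length $n+1$ only recovers the already-known containment $U_{n+1} \subset U_n$, whereas dropping the last arrow produces the nontrivial intermediate containment $f^{-1}(U_{n+1}) \subset U_n$ that is needed to propagate the inclusion up to the top of the chain. I do not anticipate any obstacle beyond this bookkeeping.
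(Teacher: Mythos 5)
Your proof is correct and is exactly the intended route: \cref{lem:maximalityUn} applied to the truncated chain
\[
f^{-(n+1)}(U_{n+1}) \xrightarrow{f} f^{-n}(U_{n+1}) \xrightarrow{f} \cdots \xrightarrow{f} f^{-1}(U_{n+1}),
\]
with $W_0 = f^{-1}(U_{n+1})$, gives the crucial containment $f^{-1}(U_{n+1}) \subset U_n$, and the indexing observation that one must drop the \emph{last} arrow (not the first) to get a nontrivial output is precisely the right bookkeeping. Two small finishing remarks: the step ``$x \notin I_{f^{n+1}}$ forces $x \notin I_{f^n}$'' is not automatic for an arbitrary rational map (indeterminacy loci of iterates need not be nested), but here it is harmless because the chain for $U_{n+1}$ already places $x, f(x), \dots, f^n(x)$ outside $I_f$; and rather than unwinding the definition of $f^{-n}(\cdot)$ pointwise, it is slightly cleaner to apply the monotone iterated pullback directly, namely $f^{-(n+1)}(U_{n+1}) = f^{-n}\bigl(f^{-1}(U_{n+1})\bigr) \subset f^{-n}(U_n)$, which is legitimate once $f^{-1}(U_{n+1}) \subset U_n$ has been established.
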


\begin{definition}
We set
\begin{align}
U_{\infty} = \bigcap_{n=0}^{\infty} U_{n}.
\end{align}
We equip the subspace topology from $X$.
With this topology, $U_{\infty}$ is a Zariski topological space (i.e. Noetherian topological space such that
every irreducible closed subset has unique generic point).
We sometimes write $U_{\infty} = X_{f}^{\rm back}$.
\end{definition}
We have $f^{-1}(U_{\infty}) \subset U_{\infty}$.
Indeed, for every $n \geq 0$, we have
\begin{align}
f^{-1}(U_{n+1}) \subset U_{n}.
\end{align}

\subsection{Backward limit of upper semicontinuous functions}

We keep the notation from the previous subsection.
\begin{definition}
A sequence $\k_{n} \colon f^{-n}(U_{n}) \longrightarrow [1, \infty), n=0,1,2,\dots$ of upper semicontinuous functions 
is called submultiplicative cocycle if
\begin{enumerate}
\item $\min\{ \k_{n}(x) \mid x \in  f^{-n}(U_{n})  \}=1$ for all $n \geq 0$;
\item for all $m,n \geq 0$ and $x \in f^{-m-n}(U_{m+n})$, we have
\begin{align}
\k_{m+n}(x) \leq \k_{n}(x) \k_{m}(f^{n}(x)).
\end{align}
Here note that the right hand side is well-defined since 
$x \in f^{-m-n}(U_{m+n}) \subset f^{-n}(U_{n})$ and $f^{n}(x) \in f^{-m}(U_{m+n}) \subset f^{-m}(U_{m})$.
\end{enumerate}
\end{definition}

We note that any upper semicontinuous function on a Zariski topological space 
attains maximum at some point and minimum at some generic point.

For a submultiplicative cocycle $\{\k_{n}\}_{n \geq 0}$, we define 
\begin{align}
\k_{-n} \colon U_{n} \longrightarrow \R, x \mapsto \max_{y \in f^{-n}(x)} \k_{n}(y).
\end{align}
Here $f^{-n}(x)$ is the inverse image of $x$ by the finite morphism
\begin{align}
f^{-n}(U_{n}) \longrightarrow U_{n}.
\end{align}
For every $\d \in \R$, we have
\begin{align}
\{ x \in U_{n} \mid \k_{-n}(x) \geq \d \} = f^{n}(\{ y \in f^{-n}(U_{n}) \mid \k_{n}(y) \geq \d \} ).
\end{align}
Since $f^{n} \colon f^{-n}(U_{n}) \longrightarrow U_{n}$ is a closed map, this set is closed in $U_{n}$.
Thus $\k_{-n}$ is upper semicontinuous on $U_{n}$.

The following is the goal of this section.
\begin{theorem}\label{thm:kappanegative}
For $x \in U_{\infty}$, the limit
\begin{align}
\lim_{n \to \infty} \k_{-n}(x)^{ \frac{1}{n}} =: \k_{-}(x)
\end{align}
exists.
Moreover, for any $\d \in \R_{>1}$, the set
\begin{align}
\{ x \in U_{\infty} \mid \k_{-}(x) \geq \d \}
\end{align}
is a proper closed subset of $U_{\infty}$.
In particular, $\k_{-} \colon U_{\infty} \longrightarrow \R$ is upper semicontinuous.
\end{theorem}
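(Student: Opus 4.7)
The plan is to combine Fekete's lemma, a Lipschitz-type cocycle estimate, the Noetherian DCC on $U_\infty$, and the Zariski adaptation of the argument in \cite{analyticmult-dinh}.

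First, set $M_n := \sup_{y \in f^{-n}(U_n)} \kappa_n(y)$. Since $\kappa_n$ is upper semicontinuous on the Noetherian open subset $f^{-n}(U_n) \subseteq X$, the descending family of closed sets $\{\kappa_n \geq c\}$ stabilizes by DCC, so $M_n < \infty$ and is attained. Submultiplicativity then gives $M_{m+n} \leq M_m M_n$, and Fekete produces $\lambda := \lim M_n^{1/n}$. The crucial pointwise bound used throughout is
\[
\kappa_{-(n+m)}(x) \ \leq\ M_m \cdot \kappa_{-n}(x) \qquad (x \in U_\infty,\ n,m \geq 0),
\]
which follows from $\kappa_{n+m}(z) \leq \kappa_m(z)\kappa_n(f^m(z)) \leq M_m \kappa_{-n}(x)$ for each $z \in f^{-(n+m)}(x)$ (using $f^m(z) \in f^{-n}(x)$) by maximizing over $z$. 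For properness, I check that the generic point $\eta_X$ of $X$ lies in $U_\infty$ and satisfies $\kappa_{-n}(\eta_X)=1$ for every $n$: the closed set $\{\kappa_n \geq 1+\varepsilon\}$ is proper in the irreducible $f^{-n}(U_n)$ for each $\varepsilon>0$, so it omits $\eta_X$, forcing $\kappa_n(\eta_X)=1$; and $f^n \colon f^{-n}(U_n) \to U_n$, being a finite surjection of irreducible spaces, has generic fiber a single topological point, so $\kappa_{-n}(\eta_X) = \kappa_n(\eta_X) = 1$.

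The main content is to prove simultaneously that $\kappa_-(x) := \lim \kappa_{-n}(x)^{1/n}$ exists pointwise on $U_\infty$ and that $\{\kappa_- \geq \delta\}$ is closed for every $\delta > 1$. Writing $\bar\kappa_-(x) := \limsup \kappa_{-n}(x)^{1/n}$, one first shows $\{\bar\kappa_- \geq \delta\}$ is closed by realizing it as a decreasing intersection, indexed by rational $\varepsilon > 0$ and $N \in \Z_{\geq 0}$, of the sets $\bigcup_{n \geq N} \{\kappa_{-n} \geq (\delta-\varepsilon)^n\}$; the Lipschitz bound above, combined with the DCC on closed subsets of the Noetherian space $U_\infty$, permits the relevant truncation of these unions and stabilization of the intersection to a single closed set. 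As an upper semicontinuous function on a Noetherian space, $\bar\kappa_-$ then takes only finitely many values, yielding a finite stratification of $U_\infty$ by closed strata on which $\bar\kappa_-$ is constant. A descending induction on strata, using the Lipschitz bound to propagate subsequential lower bounds to neighboring indices and the induction hypothesis to dispose of points in smaller strata, gives $\liminf \kappa_{-n}(x)^{1/n} = \bar\kappa_-(x)$ at every $x$, so the limit $\kappa_-$ exists and agrees with $\bar\kappa_-$. Properness of $\{\kappa_- \geq \delta\}$ for $\delta > 1$ is then immediate from $\kappa_-(\eta_X)=1$.

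The main obstacle is the closedness of the level sets $\{\bar\kappa_- \geq \delta\}$: since the Lipschitz estimate is only one-sided, the naive union $\bigcup_{n\geq N}\{\kappa_{-n}\geq c^n\}$ need not itself be closed, and one must simultaneously exploit the Noetherian DCC and the cocycle structure to truncate and stabilize it. This is precisely the adaptation of \cite{analyticmult-dinh} to the Zariski setting; once it is in place, the stratification argument giving pointwise existence of the limit is essentially formal.
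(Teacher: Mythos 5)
Your sketch correctly isolates the properness (via the generic point of $X$, where $\kappa_{-n}\equiv 1$ since the generic fiber of the finite map $f^n\colon f^{-n}(U_n)\to U_n$ is a single point) and correctly derives the one-sided bound $\kappa_{-(n+m)}(x)\le M_m\,\kappa_{-n}(x)$. But the two steps you flag as the main content --- closedness of $\{\bar\kappa_-\ge\delta\}$ and the passage from that to pointwise existence of the limit --- are both left unargued, and neither follows from the tools you invoke.

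For closedness, you represent $\{\bar\kappa_-\ge\delta\}$ as an intersection of sets $\bigcup_{n\ge N}\{\kappa_{-n}\ge(\delta-\varepsilon)^n\}$ and claim the Lipschitz bound plus Noetherian DCC lets you truncate each union. That does not work: the closed sets $A_n:=\{\kappa_{-n}\ge(\delta-\varepsilon)^n\}$ are not a descending chain. The bound $\kappa_{-(n+m)}(x)\le M_m\kappa_{-n}(x)$ only gives $A_{n+m}\subset\{\kappa_{-n}\ge(\delta-\varepsilon)^{n+m}/M_m\}$, and since $M_m^{1/m}\to\lambda$ is in general strictly larger than $\delta-\varepsilon$, the right-hand set is all of $U_\infty$ once $m$ is large; no containment $A_{n+m}\subset A_n$ is forced. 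Noetherianness does not prevent a countable union of closed sets from failing to be closed (the closed points of $\A^1$ already show this), and nothing in your argument bounds which indices $n$ can contribute.

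For the pointwise limit, the Lipschitz inequality propagates in the wrong direction. Rewritten, it gives $\kappa_{-n}(x)\ge\kappa_{-(n+m)}(x)/M_m$: a lower bound on \emph{earlier} terms from \emph{later} ones. Feeding in a subsequence $n_k$ realizing the $\limsup$ yields, with $n=n_k-m$, the estimate $\kappa_{-n}(x)^{1/n}\gtrsim\bar\kappa_-(x)^{\,n_k/n}\big/\big(M_{n_k-n}^{1/(n_k-n)}\big)^{(n_k-n)/n}$, which only approaches $\bar\kappa_-(x)$ if $(n_k-n)/n\to 0$, i.e.\ if the realizing subsequence is syndetic. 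Nothing in your sketch forces that, so ``essentially formal'' is an overclaim.

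The paper's proof supplies exactly the structural input your truncation would need: it analyzes periodic points (in the backward sense adapted to $U_\infty$), shows in \cref{lem:perkappa} that the forward limit $\kappa_+$ exists along each periodic cycle and bounds $\liminf\kappa_{-n}^{1/n}$ from below, and then runs a descending dimension induction (\cref{prop:k-induction}) which identifies $\{\kappa_-\ge\delta\}$ with a finite union $\Sigma_1\cup\cdots\cup\Sigma_k$ of closures of periodic cycles. The delicate case analysis in \cref{claim:ketabound}, distinguishing iterates that land on periodic strata from those that do not, is what actually controls the level sets and lets one pass to the complement where $\limsup<\delta$. Your proposal omits the periodic-point analysis entirely; without it the argument has no foothold.
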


First we note that
\begin{align}
1 \leq \liminf_{n \to \infty} \k_{-n}(x)^{ \frac{1}{n}} \leq \limsup_{n \to \infty} \k_{-n}(x)^{ \frac{1}{n}} < \infty
\end{align}
for all $x \in U_{\infty}$.
Indeed, by definition we have $\k_{-n} \geq 1$ and thus the first inequality is trivial.
For the last inequality, note that
\begin{align}
\k_{-n}(x) &= \max_{y \in f^{-n}(x)} \k_{n}(y) \leq \max_{y \in f^{-n}(x)} \k_{1}(y) \k_{1}(f(y)) \cdots \k_{1}(f^{n-1}(y))\\
& \leq \big( \max_{z \in f^{-1}(U_{1})}\k_{1}(z) \big)^{n}.
\end{align}
Thus we get $\limsup_{n \to \infty} \k_{-n}(x)^{ \frac{1}{n}} \leq \max_{z \in f^{-1}(U_{1})}\k_{1}(z)  < \infty$.

\begin{lemma}\label{lem:limsupinf-divisible}
For all $x \in U_{\infty}$ and $l \geq 1$, we have
\begin{align}
&\limsup_{n \to \infty} \k_{-nl}(x)^{ \frac{1}{nl}} = \limsup_{n \to \infty} \k_{-n}(x)^{ \frac{1}{n}}\\
&\liminf_{n \to \infty} \k_{-nl}(x)^{ \frac{1}{nl}} = \liminf_{n \to \infty} \k_{-n}(x)^{ \frac{1}{n}}.
\end{align}
\end{lemma}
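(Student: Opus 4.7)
The plan is to establish a uniform multiplicative comparison $\k_{-(n+s)}(x) \leq M^{s}\,\k_{-n}(x)$ for all $x \in U_{n+s}$ and $s \geq 0$, where $M := \max_{z \in f^{-1}(U_{1})}\k_{1}(z) \in [1,\infty)$ is finite by upper semicontinuity on the Zariski topological space $f^{-1}(U_{1})$. Granting this, the two halves of the lemma will follow from elementary manipulation of $n$-th roots.

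For the comparison, fix $z \in f^{-(n+s)}(x)$. Submultiplicativity and its iterate give
\[
\k_{n+s}(z) \leq \k_{s}(z)\,\k_{n}(f^{s}(z)) \leq \Big(\prod_{i=0}^{s-1}\k_{1}(f^{i}(z))\Big)\k_{n}(f^{s}(z)) \leq M^{s}\,\k_{n}(f^{s}(z)) \leq M^{s}\,\k_{-n}(x),
\]
the last step using $f^{s}(z) \in f^{-n}(x)$. Taking the max over $z$ yields the claim. The one subtlety is that this iteration requires each $f^{i}(z)$ to lie in $f^{-1}(U_{1}) = U$; this follows from the nested construction of the $U_{n}$ in \cref{subsec:finlociratmap}, which guarantees that for $x \in U_{n+s}$, the entire forward orbit segment $z, f(z), \ldots, f^{n+s-1}(z), x$ of any $z \in f^{-(n+s)}(x)$ stays inside $U$.

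Now write $n = ml + r$ with $0 \leq r < l$. Applying the comparison twice gives
\[
\k_{-(m+1)l}(x)\,M^{-(l-r)} \leq \k_{-n}(x) \leq M^{r}\,\k_{-ml}(x).
\]
Take $n$-th roots. The correction factors $M^{r/n}$ and $M^{(l-r)/n}$ tend to $1$, the exponents $ml/n$ and $(m+1)l/n$ both tend to $1$, and every $\k_{-j}(x)^{1/j}$ lies in the bounded range $[1,M]$ (since $\k_{j} \leq M^{j}$ by the same iteration of submultiplicativity). Since raising a bounded sequence in $[1,M]$ to a power tending to $1$ does not affect its limsup or liminf, one immediately deduces both statements of the lemma. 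Explicitly, for the limsup: choose a subsequence $n_{j}$ realizing $\limsup_{n}\k_{-n}(x)^{1/n}$, pass to a further subsequence on which $r_{j}$ is constant, and apply the right inequality to get $\limsup_{n}\k_{-n}(x)^{1/n} \leq \limsup_{m}\k_{-ml}(x)^{1/(ml)}$; the reverse direction is trivial. The liminf statement is symmetric, using the left inequality along a subsequence realizing $\liminf_{n}\k_{-n}(x)^{1/n}$ to produce a sub-subsequence bounding $\liminf_{m}\k_{-ml}(x)^{1/(ml)}$ from above.

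The main obstacle is minor: the careful verification that all the $\k_{i}$ appearing in the iterated submultiplicative bound are defined on the correct open loci $U_{i}$ — i.e.\ the bookkeeping of the nested domains set up in \cref{subsec:finlociratmap}. Once the inequality $\k_{-(n+s)} \leq M^{s}\k_{-n}$ is in place, the remainder is a routine Fekete-style limit manipulation.
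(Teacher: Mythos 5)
Your proof is correct and follows essentially the same strategy as the paper: establish a uniform multiplicative comparison between $\k_{-n}(x)$ and $\k_{-(n+s)}(x)$ for $0\le s\le l$, then pass to limits; the paper takes $M = \max_{0\le s\le l}\max_{z\in f^{-s}(U_s)}\k_s(z)$ and applies submultiplicativity once per step, while you iterate $\k_1$ to get $M^s$, a cosmetic difference. One slip worth flagging: you assert $f^{-1}(U_1) = U$, but $U_1 = U$ gives $f^{-1}(U_1) = f^{-1}(U)$, which is generally not equal to $U$. The fact you actually need --- that $f^i(z)$ lies in the domain $f^{-1}(U_1)$ of $\k_1$ for $0\le i\le s-1$ when $z\in f^{-(n+s)}(U_{n+s})$ --- does hold, since $f^i(z)\in f^{-(n+s-i)}(U_{n+s})\subset f^{-(n+s-i)}(U_{n+s-i})$ and $f^{-j}(U_j)=f^{-1}(f^{-(j-1)}(U_j))\subset f^{-1}(U_1)$ for $j\ge 1$ because $f^{-(j-1)}(U_j)\subset U$ by the nested construction.
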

\begin{proof}
Let $M = \max_{0 \leq s \leq l} \max_{z \in f^{-s}(U_{s})} \k_{s}(z)$.
For any $m \geq 0$, $s \in \{0, \dots, l\}$, take $y \in f^{-m-s}(x)$ such that
$\k_{-m-s}(x) = \k_{m+s}(y)$.
Then 
\begin{align}
\k_{-m-s}(x) = \k_{m+s}(y) \leq \k_{s}(y) \k_{m}(f^{s}(y)) \leq M \k_{-m}(x).
\end{align}
Thus
\begin{align}
M^{-1}\k_{-(n+1)l}(x) \leq \k_{-nl -s}(x) \leq M\k_{-nl}(x)
\end{align}
for all $n \geq 0$ and $s \in \{0, \dots, l\}$. Thus we are done.
\end{proof}

As we are working on a locus where only the backward orbits
are well-defined, we need the following non-standard definition of periodic points.
\begin{definition}
Let $y \in U_{\infty}$.
We say $y$ is $f$-periodic if 
\begin{align}
y \in f^{-l}(y)
\end{align}
for some $l \in \Z_{\geq 1}$.
\end{definition}

\begin{lemma}\label{lem:basic-per-pt}
Let $y \in U_{\infty}$ be an $f$-periodic point.
Then
\begin{enumerate}
\item
$ y \in f^{-m}(U_{m})$ for all $m \geq 0$;
\item
$f^{m}(y)$ is also $f$-periodic for all $m \geq 0$.
\end{enumerate}
\end{lemma}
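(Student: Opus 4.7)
The plan is to prove (1) first and then deduce (2) from it.

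For (1), I would first show $y \in f^{-kl}(U_{kl})$ for every $k \geq 0$, and then use the nested chain $f^{-(n+1)}(U_{n+1}) \subset f^{-n}(U_n)$ to pass to every $m \geq 0$: given $m$, choose $k$ with $kl \geq m$. The claim $y \in f^{-kl}(U_{kl})$ reduces, by the inductive construction of the $U_n$, to two checks: that $f^{kl}(y) \in U_{kl}$, and that the forward iterates $y, f(y), \dots, f^{kl-1}(y)$ all lie in $U$. The first holds because iterating $f^l(y) = y$ gives $f^{kl}(y) = y \in U_\infty \subset U_{kl}$. The second follows by unpacking the hypothesis $y \in f^{-l}(U_l)$: the chain of finite morphisms $f^{-l}(U_l) \to f^{-(l-1)}(U_l) \to \cdots \to U_l$ forces $y, f(y), \dots, f^{l-1}(y) \in U$, and by periodicity the forward orbit of $y$ is exactly this $l$-cycle, repeated.

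For (2), I would first show $f^m(y) \in U_\infty$. For any $n \geq 0$, choose $j$ with $m+jl \geq n$. By (1), $y \in f^{-(m+jl)}(U_{m+jl})$, and since $f^{jl}(y) = y$ we have $f^{m+jl}(y) = f^m(y)$, so $f^m(y) \in U_{m+jl} \subset U_n$. Hence $f^m(y) \in U_\infty$. To see $f^m(y)$ is periodic, (1) applied at $m+l$ yields $f^m(y) \in f^{-l}(U_{m+l}) \subset f^{-l}(U_l)$, and the identity $f^l(f^m(y)) = f^{m+l}(y) = f^m(f^l(y)) = f^m(y)$ then places $f^m(y) \in f^{-l}(f^m(y))$, as required.

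The main obstacle is purely bookkeeping: each $f^{-n}(U_n)$ is defined inductively as the source of a finite morphism, so one must verify that $y$, or later $f^m(y)$, really lies in the correct such domain, not only in the naive set-theoretic preimage. Once the forward orbit of $y$ is pinned to $U$, which is the crux of (1), the identity $f^l(y) = y$ automatically propagates through every level of the nested sets $U_n$ and $f^{-n}(U_n)$, and both conclusions fall out with no further input.
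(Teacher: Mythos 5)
Your high-level plan for part (1) --- establish $y \in f^{-kl}(U_{kl})$ for every $k$, then descend to arbitrary $m$ via the decreasing chain --- matches the paper, and part (2) is essentially the paper's argument (you exhibit period $l$ for $f^m(y)$ where the paper exhibits period $nl$; the mechanics are the same). The problem is the key step in (1). You assert that $y \in f^{-kl}(U_{kl})$ ``reduces, by the inductive construction of the $U_n$, to two checks,'' namely $f^{kl}(y) \in U_{kl}$ and $y, f(y), \dots, f^{kl-1}(y) \in U$. This is tantamount to the pointwise description
\begin{align}
f^{-n}(U_n) = \left\{ z \notin I_{f^n} \mid f(z),\dots,f^{n-1}(z)\in U,\ f^n(z)\in U_n \right\},
\end{align}
which is true but is nowhere stated or proved in the paper, and is not a formal consequence of the construction: $f^{-n}(U_n)$ is defined as the maximal open set over which $f^n$ is finite, not as a naive set-theoretic preimage, and \cref{lem:maximalityUn} --- the natural tool --- is phrased for chains of open subsets rather than for individual orbit segments. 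One can prove the displayed identity by induction on $n$ using $f^{-(n+1)}(U_{n+1}) = f^{-1}(f^{-n}(U_{n+1}))$ and the inclusion $f^{-n}(U_{n+1}) \subset U$, but that is precisely the ``purely bookkeeping'' content you declare to be automatic; as written, the proposal assumes the very lemma it needs.

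The paper sidesteps this entirely by iterating preimages of subsets of $U_\infty$: from $y \in f^{-l}(y)$ and monotonicity of $A \mapsto f^{-l}(A)$ one gets $y \in f^{-l}(y) \subset f^{-l}(f^{-l}(y)) = f^{-2l}(y)$ and inductively $y \in f^{-nl}(y) \subset f^{-nl}(U_\infty) \subset f^{-nl}(U_{nl})$; the decreasing chain of the $f^{-m}(U_m)$ then yields (1) at once, with no pointwise description of $f^{-n}(U_n)$ needed. You should either prove the pointwise characterization as a lemma, or switch to the set-level iteration.
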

\begin{proof}
Suppose $y \in f^{-l}(y)$ with $l \geq 1$.
Then $y \in f^{-nl}(y) \subset f^{-nl}(U_{\infty})$
for all $n \geq 0$.
Then for any $n \geq 0$ and $s \in \{0 ,\dots , l-1\}$, we have
\begin{align}
y \in f^{-(n+1)l}(U_{\infty}) \subset f^{-(n+1)l}(U_{(n+1)l})  \subset f^{-nl -s}(U_{nl+s}).
\end{align}
This proves (1).

Next, let $m \geq 0$.
Take $n \geq 0$ such that $nl \geq m$.
Since $y \in f^{-nl}(y)$, we have $f^{nl-m}(f^{m}(y))=y$.
Thus $f^{m}(y) \in f^{-(nl-m)}(y) \subset f^{-(nl-m)}(U_{\infty}) \subset U_{\infty}$.
Moreover, since $y \in f^{-nl-m}(U_{nl+m})$, $f^{nl}(f^{m}(y)) = f^{m}(y)$.
This proves (2).
\end{proof}

\begin{lemma}\label{lem:perkappa}
Let $y \in U_{\infty}$ be an $f$-periodic point.
Then $\k_{n}(y)$ is well-defined for all $n \geq 0$ by \cref{lem:basic-per-pt},
and we have the following.
\begin{enumerate}
\item
$\k_{+}(y) := \lim_{n \to \infty} \k_{n}(y)^{ \frac{1}{n}}$ exists.

\item
For all $m \geq 0$, we have $\k_{+}(f^{m}(y)) = \k_{+}(y)$.

\item
$ \liminf_{n \to \infty} \k_{-n}(y)^{ \frac{1}{n}} \geq \k_{+}(y)$.

\end{enumerate}
\end{lemma}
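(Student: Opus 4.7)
The plan is to reduce everything to a subadditive sequence along the periodic orbit via Fekete's lemma, then sandwich arbitrary indices between multiples of the period, and finally transfer the forward statement to the backward $\liminf$.

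Fix $l \geq 1$ with $y \in f^{-l}(y)$; by definition of $f^{-l}$ as the fiber of the finite morphism $f^{-l}(U_l) \to U_l$ (and since $y \in U_\infty \subset f^{-l}(U_l)$), this means $f^l(y) = y$, hence $f^{ml}(y) = y$ for every $m \geq 0$. Well-definedness of $\k_n(y)$ for all $n \geq 0$ is immediate from \cref{lem:basic-per-pt}(1).

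For (1), I would apply submultiplicativity together with $f^{ml}(y) = y$ to obtain
\begin{align}
\k_{(m+n)l}(y) \leq \k_{ml}(y)\,\k_{nl}(f^{ml}(y)) = \k_{ml}(y)\,\k_{nl}(y),
\end{align}
so that $a_k := \log \k_{kl}(y)$ is subadditive in $k$, and Fekete's lemma provides $L := \lim_k \k_{kl}(y)^{1/(kl)}$. To pass from multiples of $l$ to all $n$, write $n = kl + s$ with $0 \leq s < l$ and use submultiplicativity twice, noting $f^n(y) = f^s(y)$:
\begin{align}
\k_n(y) \leq \k_{kl}(y)\,\k_s(y), \qquad \k_{(k+1)l}(y) \leq \k_n(y)\,\k_{l-s}(f^s(y)).
\end{align}
The residual factors $\k_s(y)$ and $\k_{l-s}(f^s(y))$ for $0 \leq s < l$ are bounded by a constant $M$ depending only on $y$ and $l$, which makes the $n$-th roots of both bounds converge to $L$ as $n \to \infty$ (using $kl/n \to 1$ and $(k+1)l/n \to 1$).

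For (2), submultiplicativity directly yields $\k_{n+m}(y) \leq \k_m(y)\,\k_n(f^m(y))$; taking $n$-th roots and the limit gives $\k_+(f^m(y)) \geq \k_+(y)$. For the reverse inequality I would choose $k \geq 0$ with $m + k$ a multiple of $l$ (possible by periodicity), so $f^{m+k}(y) = y$; then $\k_{n+k}(f^m(y)) \leq \k_k(f^m(y))\,\k_n(y)$ produces $\k_+(f^m(y)) \leq \k_+(y)$ in the same manner. For (3), periodicity gives $y \in f^{-kl}(y)$, so by definition of $\k_{-kl}$ as a maximum over the fiber,
\begin{align}
\k_{-kl}(y) \geq \k_{kl}(y).
\end{align}
Taking $(kl)$-th roots and $k \to \infty$ gives $\liminf_k \k_{-kl}(y)^{1/(kl)} \geq \k_+(y)$, and \cref{lem:limsupinf-divisible} identifies this with the full $\liminf_n \k_{-n}(y)^{1/n}$.

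The only real obstacle is the bookkeeping in (1) when extending from the sparse sequence $\{kl\}$ to arbitrary $n$: one must verify that the residual factors $\k_s(y), \k_{l-s}(f^s(y))$ are finite (which uses $y \in U_\infty$ and \cref{lem:basic-per-pt}(2) to ensure $f^s(y) \in f^{-(l-s)}(U_{l-s})$) and uniformly bounded over the finite range $0 \leq s < l$; the remaining two parts are direct consequences of submultiplicativity combined with the identity $f^{ml}(y) = y$.
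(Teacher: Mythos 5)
Your proposal is correct and follows essentially the same route as the paper: Fekete's lemma on the subsequence $\{kl\}$, sandwiching arbitrary indices between multiples of $l$ using boundedness of the residual factors, and transferring forward to backward via $y\in f^{-kl}(y)$ together with \cref{lem:limsupinf-divisible}. The only cosmetic difference is in (2), where you avoid the paper's one-step induction on $m$ by pairing $m$ with a complementary $k$ so that $m+k\equiv 0\ (\mathrm{mod}\ l)$; both arguments rest on the same submultiplicativity inequalities.
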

\begin{proof}
Let $l \geq 1$ be such that $y \in f^{-l}(y)$.
Then we have
\begin{align}
\k_{(m+n)l}(y) \leq \k_{ml}(y) \k_{nl}(f^{ml}(y)) = \k_{ml}(y) \k_{nl}(y).
\end{align}
Thus by Fekete's lemma, $\lim_{n \to \infty} \k_{nl}(y)^{ \frac{1}{nl}}$ exists.
For $m \geq0$ and $s \in \{0, \dots, l\}$, we have
\begin{align}
\k_{m+s}(y) \leq \k_{m}(y) \k_{s}(f^{m}(y)) \leq M \k_{m}(y)
\end{align}
where $M = \max_{0 \leq s \leq l} \max_{z \in f^{-s}(U_{s})} \k_{s}(z)$.
Thus 
\begin{align}
M^{-1} \k_{(n+1)l}(y) \leq \k_{nl + s} (y) \leq M \k_{nl}(y).
\end{align}
Thus $ \lim_{n \to \infty} \k_{n}(y)^{ \frac{1}{n}}$ exists.

Similarly, we have
\begin{align}
&\k_{n+1}(y) \leq \k_{1}(y) \k_{n}(f(y)) \leq M \k_{n}(f(y))\\
&\k_{n}(f(y)) \leq \k_{l-1}(f(y)) \k_{n-(l-1)}(f^{l}(y)) \leq M \k_{n-(l-1)}(y).
\end{align}
Thus we get $\k_{+}(f(y)) = \k_{+}(y)$. Thus inductively we get $\k_{+}(f^{m}(y)) = \k_{+}(y)$ for all $m \geq 0$.

Finally, since $\k_{-nl}(y) = \max_{z \in f^{-nl}(y)} \k_{nl}(z) \geq \k_{nl}(y)$, we have
\begin{align}
\liminf_{n \to \infty} \k_{-nl}(y)^{ \frac{1}{nl}} \geq \lim_{n \to \infty} \k_{nl}(y)^{ \frac{1}{nl}} = \k_{+}(y).
\end{align}
By \cref{lem:limsupinf-divisible}, we are done.
\end{proof}

\begin{lemma}
Let $T$ be a topological space.
Let $\{O_{\l}\}_{\l \in \L}$ be a family of open subsets of $T$.
Let $Y \subset T$ be an irreducible closed subset with generic point $\eta \in Y$.
Then $Y \cap \bigcap_{\l \in \L} O_{\l} \neq  \emptyset$ if and only if $\eta \in \bigcap_{\l \in \L} O_{\l}$.
\end{lemma}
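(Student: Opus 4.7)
The plan is to prove the two implications separately; the lemma is essentially a restatement of the fact that in an irreducible space every nonempty open subset contains the generic point, applied one index at a time.

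For the reverse implication, suppose $\eta \in \bigcap_{\l \in \L} O_{\l}$. Since $\eta \in Y$, we immediately get $\eta \in Y \cap \bigcap_{\l \in \L} O_{\l}$, so this intersection is nonempty. This direction requires nothing about irreducibility.

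For the forward implication, suppose $Y \cap \bigcap_{\l \in \L} O_{\l} \neq \emptyset$ and pick $x$ in this intersection. Fix an arbitrary $\l \in \L$; I will show $\eta \in O_{\l}$. Consider the closed subset $Y \setminus O_{\l} = Y \cap (T \setminus O_{\l}) \subset Y$. Since $x \in Y \cap O_{\l}$, this closed subset is a \emph{proper} closed subset of $Y$. Now, because $\eta$ is the generic point of $Y$, we have $\overline{\{\eta\}} = Y$ (closure taken in $T$, or equivalently in $Y$). If $\eta$ were in $Y \setminus O_{\l}$, then $\overline{\{\eta\}} \subset Y \setminus O_{\l}$ by closedness, forcing $Y \subset Y \setminus O_{\l}$, a contradiction. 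Hence $\eta \in O_{\l}$.

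Since $\l \in \L$ was arbitrary, $\eta \in \bigcap_{\l \in \L} O_{\l}$, finishing the proof. There is no real obstacle here: the only subtlety is keeping track of which direction uses irreducibility (the forward direction) and noting that the argument handles each $O_{\l}$ individually, so the family $\{O_\l\}_{\l \in \L}$ may be arbitrary (in particular infinite) without needing any compactness-type hypothesis on $T$.
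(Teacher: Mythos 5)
Your proof is correct and fills in the details that the paper itself omits (the paper's entire proof is ``This is obvious''). The argument is the standard one: the reverse implication is immediate, and the forward implication reduces, one open set at a time, to the fact that the generic point of an irreducible closed set lies in every nonempty open subset of that set.
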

\begin{proof}
This is obvious.
\end{proof}
As $U_{\infty}$ is intersection of open subsets of $X$, this lemma applies to
any open subset of $U_{\infty}$ and irreducible closed subset of $X$.
That is, for any open subset $W \subset U_{\infty}$ and irreducible closed subset $Y \subset X$,
$Y \cap W \neq  \emptyset$ if and only if the generic point of $Y$ is contained in $W$.

We also note that an intersection of a family of open subsets of $X$ is a Zariski topological space.
Thus any open subset $W \subset U_{\infty}$ is a Zariski topological space.
For a closed subset $Z \subset W$, let $\eta_{1}, \dots, \eta_{r}$ be the generic points of $Z$.
Then $\eta_{1}, \dots, \eta_{r}$ are also the set of all generic points of $ \overline{Z}$, 
the Zariski closure of $Z$ in $X$.

In the following, all the closures are taken as subsets of $X$.

\begin{proposition}\label{prop:k-induction}
Let $\dim X = k$. Let $q \in \{0, \dots, k-1\}$.
Let $ \Omega \subset U_{\infty}$ be a non-empty open subset such that $f^{-1}( \Omega) \subset \Omega$.
Assume there exist $n_{0} \in \Z_{\geq 1}$ and $\d \in \R_{>1}$ such that
\begin{align}
\dim \overline{ \{ x \in \Omega \mid \k_{-n_{0}}(x) \geq \d^{n_{0}} \} } \leq q.
\end{align}
Then there are $n_{1} \in \Z_{\geq 1}$ and a proper closed subset $ \Sigma \subsetneq U_{\infty}$ such that
\begin{enumerate}
\item
every generic point $\eta \in \Sigma$ is $f$-periodic, and $f^{m}(\eta) \in \Sigma$ for $m \geq 0$;
\item
either $ \Sigma =  \emptyset$, or $ \overline{ \Sigma}$ is of pure dimension $q$;
\item
$\liminf_{n \to \infty} \k_{-n}(x)^{ \frac{1}{n}} \geq \d$ for all $x \in \Sigma$;
\item
$\dim \overline{ \{ x \in \Omega \setminus \Sigma \mid \k_{-n_{1}}(x) \geq \d^{n_{1}} \} } \leq q-1$.
(When $q=0$, this means the set in the left hand side is empty.)
\end{enumerate}
Note that in this case, we have $f^{-1}( \Omega \setminus \Sigma) \subset  \Omega \setminus \Sigma$.
\end{proposition}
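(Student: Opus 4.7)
The plan is to construct $\Sigma$ as the intersection with $U_\infty$ of a finite union of $f^{n_0}$-periodic $q$-dimensional irreducible subvarieties of $X$, extracted from the forward orbits of the $q$-dimensional components of $Y_{n_0}:=\overline{V_{n_0}}^{X}$, where $V_n:=\{x\in\Omega\mid \kappa_{-n}(x)\ge\delta^n\}$ (closed in $\Omega$ by upper semicontinuity of $\kappa_{-n}$).

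The first ingredient is a cocycle reduction. Using $\kappa_{mn_0}(y)\le\prod_{i=0}^{m-1}\kappa_{n_0}(f^{in_0}(y))$ on $f^{-mn_0}(\Omega)$ together with the iterated inclusion $f^{-j}(\Omega)\subset\Omega$, one verifies that whenever $x\in V_{mn_0}$ and $y\in f^{-mn_0}(x)$ achieves $\kappa_{mn_0}(y)\ge\delta^{mn_0}$, some factor satisfies $\kappa_{n_0}(f^{in_0}(y))\ge\delta^{n_0}$; setting $z:=f^{(i+1)n_0}(y)$ one obtains $z\in V_{n_0}$ with $f^{(m-i-1)n_0}(z)=x$. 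Writing $W_1,\ldots,W_s$ for the $q$-dimensional irreducible components of $Y_{n_0}$ and
\[
A_{l,j}:=\overline{f^{jn_0}(W_l\cap f^{-jn_0}(\Omega))}^{X},
\]
each $A_{l,j}$ is irreducible of dimension $\le q$ (as the closure of a constructible image under the finite morphism $f^{jn_0}|_{f^{-jn_0}(\Omega)}$), and the argument above forces every $q$-dimensional irreducible component of $Y_{mn_0}$ to coincide with some $A_{l,j}$ with $0\le j\le m-1$.

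The main obstacle is to prove that for each $l$ the forward orbit $(A_{l,j})_{j\ge 0}$, governed by the inclusion $A_{l,j+1}\subset\overline{f^{n_0}(A_{l,j}\cap f^{-n_0}(\Omega))}^{X}$ (which becomes equality whenever both sides are $q$-dimensional irreducible), either eventually drops below dimension $q$ or is eventually periodic. Following the strategy of \cite{analyticmult-dinh}, I will argue this by pigeonhole: if $(A_{l,j})_{j\ge 0}$ remains $q$-dimensional indefinitely then the set of $q$-dimensional subvarieties it visits is finite, for each such subvariety must appear as a component of some $Y_{mn_0}$, and among the bounded family $\{A_{l',j'}:0\le j'<m\}$ a cycle must emerge by pigeonhole as $m$ grows.

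Define $\Sigma_0$ to be the union of those $A_{l,j}$ which are $f^{n_0}$-periodic (meaning $A_{l,j+k}=A_{l,j}$ for some $k\ge 1$) and set $\Sigma:=\Sigma_0\cap U_\infty$. Properties (1) and (2) follow by construction: $\Sigma_0$ is a finite union of $q$-dimensional irreducible subvarieties left invariant by the appropriate $f^{kn_0}$, so generic points of $\Sigma$ are $f$-periodic and their forward $f$-orbits remain in $\Sigma$. For (3), at the generic point $\eta$ of any component of $\Sigma$, upper semicontinuity of $\kappa_{-n_0}$ on $U_{n_0}$ together with density of $V_{n_0}$ in the originating $W_l$ yields $\kappa_{-n_0}(\eta')\ge\delta^{n_0}$ at every point $\eta'$ of the periodic cycle; iterating the witness-preimage construction along the cycle and then invoking \cref{lem:perkappa}(3) gives $\liminf_n\kappa_{-n}(\eta)^{1/n}\ge\delta$. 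Finally, (4) is obtained by choosing $n_1=Nn_0$ with $N$ large enough that every non-periodic $A_{l,j}$ with $0\le j\le N-1$ has dimension strictly less than $q$, so that $\overline{V_{n_1}\setminus\Sigma}^{X}$ has dimension $\le q-1$.
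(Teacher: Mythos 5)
Your strategy — track the $q$\nobreakdash-dimensional components of $\overline{V_{n_0}}$ forward, and detect cycles — is the natural ``dual'' of the paper's argument, but two steps in it do not hold up.

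\textbf{Gap in property (3).} You define $\Sigma_0$ as the union of \emph{all} $f^{n_0}$\nobreakdash-periodic $A_{l,j}$, and claim that at the generic point $\eta$ of any cycle one has $\kappa_{-n_0}(\eta')\ge\delta^{n_0}$ along the whole cycle, hence $\liminf_n\kappa_{-n}(\eta)^{1/n}\ge\delta$. Neither claim follows. First, $W_l$ is a component of $\overline{V_{n_0}}$, but its forward images $A_{l,j}$ for $j\ge 1$ need not meet $V_{n_0}$ at all, so you cannot transport $\kappa_{-n_0}\ge\delta^{n_0}$ along the cycle by semicontinuity. Second — and this is the essential point — even at the periodic generic point $y$ of $W_l$ itself, the hypothesis $\kappa_{-n_0}(y)\ge\delta^{n_0}$ only says \emph{some} $f^{n_0}$\nobreakdash-preimage of $y$ has $\kappa_{n_0}\ge\delta^{n_0}$; it says nothing about the preimage $y$ lying on the cycle. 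So one can perfectly well have $\kappa_{-n_0}(y)\ge\delta^{n_0}$ together with $\kappa_+(y)<\delta$, and then \cref{lem:perkappa}(3) gives no lower bound near $\delta$. This is precisely the dichotomy the paper's proof encodes in the split between $\Sigma$ (periodic generic points of $\widetilde\Sigma$ with $\kappa_+\ge\delta$) and $\Sigma'$ (periodic with $\kappa_+<\delta$). Your $\Sigma$ conflates the two, and for a cycle of $\Sigma'$\nobreakdash-type property (3) fails.

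\textbf{Gap in property (4).} You conclude by choosing $N$ so that ``every non-periodic $A_{l,j}$ with $0\le j\le N-1$ has dimension strictly less than $q$.'' This is false whenever a forward orbit $(A_{l,j})_j$ is eventually periodic but has a pre-periodic tail: the tail members $A_{l,0},\dots,A_{l,j_0-1}$ are $q$\nobreakdash-dimensional, are not periodic, and do not disappear as $N$ grows. (The same remark applies to $q$\nobreakdash-dimensional components whose generic point is \emph{not} $f$\nobreakdash-periodic at all — the paper's $\Sigma''$.) Moreover, once the $\Sigma'$\nobreakdash-type cycles are removed from $\Sigma$ to save (3), they reappear in $\Omega\setminus\Sigma$ and you must bound $\kappa_{-n_1}$ on them. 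None of these contributions can be made to ``drop dimension''; they have to be controlled quantitatively. That is the content of the paper's Claim~\ref{claim:ketabound}: the backward orbit of a $q$\nobreakdash-dimensional generic point passes through the finitely many non-periodic pieces at most $l$ times (giving a multiplicative loss $M^l$), and once it lands in a $\Sigma'$\nobreakdash-cycle it stays there, where $\kappa_+<\delta$ gives a genuine exponential saving $\delta_1<\delta$ after $m$ steps. Your proposal has no counterpart of this case analysis; the pigeonhole idea gives eventual periodicity of the forward orbits, but that alone does not deliver the growth estimate in (4).

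In short, the decisive filter $\kappa_+\ge\delta$ and the three-way split $\Sigma,\Sigma',\Sigma''$ — together with the bounded-excursion estimate of Claim~\ref{claim:ketabound} — are not optional bookkeeping; they are where the actual inequality in (4) comes from. A forward-orbit version of the argument is plausible, but it would still need to (i) restrict $\Sigma$ to cycles with $\kappa_+\ge\delta$, and (ii) carry out a bounded-contribution estimate for the pre-periodic tails and the $\kappa_+<\delta$ cycles.
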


\begin{proof}
Let $ \widetilde{ \Sigma}$ be the union of the irreducible components $Z$ of 
$\{ x \in \Omega \mid \k_{-n_{0}}(x) \geq \d^{n_{0}} \}$ such that $\dim \overline{Z} = q$.

\begin{claim}\label{claim:delta0}
There is $1 \leq \d_{0} < \d$ such that for every $x \in f^{-n_{0}}( \Omega \setminus \widetilde{\Sigma})$
with $\dim \overline{\{x\}} \geq q$, we have $\k_{n_{0}}(x) \leq \d_{0}^{n_{0}}$.
\end{claim}
\begin{proof}
Since $\k_{-n_{0}} \colon U_{n_{0}} \longrightarrow \R$ is upper semicontinuous and
$U_{n_{0}}$ is Noetherian topological space, there is $\d_{0} < \d$ such that
\begin{align}
\{ z\in U_{n_{0}} \mid \k_{-n_{0}}(z) \geq \d^{n_{0}} \} = \{ z\in U_{n_{0}} \mid \k_{-n_{0}}(z) > \d_{0}^{n_{0}} \}.
\end{align} 
For $x$ as in the claim, suppose $\k_{n_{0}}(x) > \d_{0}^{n_{0}}$.
Then $\k_{-n_{0}}(f^{n_{0}}(x)) \geq \k_{n_{0}}(x) > \d_{0}^{n_{0}}$, and hence $\k_{-n_{0}}(f^{n_{0}}(x)) \geq \d^{n_{0}}$.
Then by the choice of $ \widetilde{ \Sigma}$, $\dim \overline{ \{f^{n_{0}}(x)\}} \leq q-1$.
But $f^{n_{0}} \colon f^{-n_{0}}(U_{n_{0}}) \longrightarrow U_{n_{0}}$ is finite and $x \in f^{-n_{0}}(U_{n_{0}}) $,
this is a contradiction.
\end{proof}

We define closed subsets $ \Sigma, \Sigma', \Sigma'' \subset U_{\infty}$ as follows.
Let 
\begin{align}
\Sigma = \bigcup \left\{ \overline{\{f^{i}(y)\}} \cap U_{\infty} \ \middle|\ \parbox{15em}{$i \geq 0$, $y$ is an $f$-periodic generic point of $ \widetilde{ \Sigma}$ such that $\k_{+}(y) \geq \d$ } \right\}.
\end{align}
Note that the generic point of $U_{\infty}$ is $f$-periodic and $\k_{+}=1$.
Thus $ \Sigma \subsetneq U_{\infty}$.
It is easy to see that $ \Sigma$ satisfies the properties (1)(2).
Moreover, for any $x \in \Sigma$, take $y$ as in the definition of $ \Sigma$ so that $x \in \overline{f^{i}(y)}$.
Then by the upper semicontinuity of $\k_{-n}$'s on $U_{\infty}$ and \cref{lem:perkappa}, we have
\begin{align}
\liminf_{n \to \infty} \k_{-n}(x)^{ \frac{1}{n}} \geq \liminf_{n \to \infty} \k_{-n}(f^{i}(y))^{ \frac{1}{n}} \geq \k_{+}(f^{i}(y)) = \k_{+}(y) \geq \d.
\end{align}
Thus (3) also holds.
Furthermore, we have $f^{-1}( \Omega \setminus \Sigma) \subset \Omega \setminus \Sigma$.
Indeed, let $x \in f^{-1}( \Omega \setminus \Sigma) $.
Since $f^{-1}( \Omega) \subset \Omega$, we only need to show that $x \not\in \Sigma$.
Suppose $x \in \Sigma$. Take a generic point $y \in \Sigma$ such that $x \in \overline{\{y\}}$.
Since $x, y \in f^{-1}(U_{1})$, we have
\begin{align}
f(x) \in f( \overline{\{y\}} \cap f^{-1}(U_{1})) = \overline{\{f(y)\}} \cap U_{1}.
\end{align}
Since $f(y) \in \Sigma$, we get $f(x) \in \Sigma$ and this is contradiction.

Next, let
\begin{align}
\Sigma' = \bigcup \left\{ \overline{\{f^{i}(y)\}} \cap U_{\infty} \ \middle|\ \parbox{15em}{$i \geq 0$, $y$ is an $f$-periodic generic point of $ \widetilde{ \Sigma}$ such that $\k_{+}(y) < \d$ } \right\}.
\end{align}
Then by \cref{lem:perkappa}, for any generic point $z \in \Sigma' $, we have $\k_{+}(z) < \d$.
Thus there are $1 \leq \d_{1} < \d$ and $m \geq 1$ such that 
\begin{align}
\k_{mn_{0}}(z) \leq \d_{1}^{mn_{0}}
\end{align}
for all generic points $z \in \Sigma'$.

Finally, let
\begin{align}
\Sigma'' = \bigcup \left\{ \overline{\{y\}} \cap U_{\infty} \ \middle|\ \parbox{15em}{$y$ is a generic point of $ \widetilde{ \Sigma}$, which is not $f$-periodic } \right\}.
\end{align}
We set $l$ to be the number of generic points of $\Sigma'' $.

Now we set
\begin{align}
M = \max_{z \in f^{-n_{0}}(U_{n_{0}})} \k_{n_{0}}(z).
\end{align}
Then take $N \geq 1$ so that
\begin{align}
M^{l} \d_{0}^{Nmn_{0}}, \d_{1}^{Nmn_{0}}, M^{l+m} \max\{\d_{0},\d_{1}\}^{Nmn_{0}} < \d^{Nmn_{0}}.
\end{align}
Set $n_{1} = Nmn_{0}$.
We claim that (4) holds for this $n_{1}$.
To this end, it is enough to show that for any $\eta_{0} \in \Omega \setminus \Sigma$ with
$\dim \overline{\{\eta_{0}\}} =: q' \geq q$, we have $ \k_{-n_{1}}(\eta_{0}) < \d^{n_{1}}$.

For arbitrary $\eta' \in f^{-Nmn_{0}}(\eta_{0})$, we set
\begin{equation}
\adjustbox{scale=.8,center}{
\begin{tikzcd}
f^{-Nmn_{0}}(U_{Nmn_{0}}) \arrow[r, "f^{n_{0}}"] & f^{-(Nm -1)n_{0}}(U_{Nmn_{0}}) \arrow[r, "f^{n_{0}}"] & \cdots \arrow[r, "f^{n_{0}}"] & f^{-n_{0}}(U_{Nmn_{0}}) \arrow[r, "f^{n_{0}}"] & U_{Nmn_{0}}\\[-1em]
\eta_{-Nm} \arrow[u,phantom, "\in" , sloped] \arrow[r, mapsto,shorten <= 1em,shorten >= 1em]& \eta_{-Nm+1} \arrow[u,phantom, "\in" , sloped] \arrow[r, mapsto,shorten >= 2em]& \cdots \arrow[r, mapsto,shorten <= 2em]& \eta_{-1} \arrow[u,phantom, "\in" , sloped] \arrow[r, mapsto,shorten <= 1em,shorten >= 1em]& \eta_{0} \arrow[u,phantom, "\in" , sloped]\\[-1em]
\eta' \arrow[u,phantom, "=" , sloped]& f^{n_{0}}(\eta') \arrow[u,phantom, "=" , sloped] && f^{(Nm - 1)n_{0}}(\eta')\arrow[u,phantom, "=" , sloped] &.
\end{tikzcd}
}
\end{equation}
Since $f^{-1}( \Omega \setminus \Sigma) \subset \Omega \setminus \Sigma$,
we have $\eta_{-i} \in  \Omega \setminus \Sigma$ for $i=0,\dots, Nm$.
Also, since the morphisms appeared above are all finite,
we have $\dim \overline{\{ \eta_{-i}\}} = \dim \overline{\{ \eta_{0}\}} = q' \geq q$.
Thus we have
\begin{align}
\sharp \{i \in \{0, 1, \dots, Nm\} \mid \eta_{-i} \in \Sigma''\} \leq l.
\end{align}

\begin{claim}\label{claim:ketabound}
We have
\begin{align}
\k_{Nmn_{0}}(\eta_{-Nm}) < \d^{Nmn_{0}}.
\end{align}
\end{claim}
Once we have proven this, we get 
\begin{align}
\k_{-n_{1}}(\eta_{0}) = \k_{-Nmn_{0}}(\eta_{0}) = \max_{\eta' \in f^{-Nmn_{0}}(\eta_{0})} \k_{Nmn_{0}}(\eta') < \d^{Nmn_{0}} = \d^{n_{1}}
\end{align}
and we are done.
\begin{proof}[Proof of \cref{claim:ketabound}]\ 

{\bf Case 1.} Suppose $\eta_{0} \notin \Sigma'$. (Note that this holds automatically when $q' > q$.)
In this case $\eta_{-i} \notin \Sigma'$ for all $i = 0,\dots, Nm$, since otherwise $\eta_{-i}$ coincide with
a generic point of $ \Sigma'$ and thus $f$-periodic. Thus $\eta_{0} \in \Sigma'$ and contradiction.
Thus $\eta_{-i} \in \Omega \setminus \widetilde{\Sigma}$ for all but at most $l$ $i$'s.
Thus by \cref{claim:delta0},
\begin{align}
\k_{Nmn_{0}}(\eta_{-Nm}) &\leq \k_{n_{0}}(\eta_{-Nm}) \k_{n_{0}}(\eta_{-Nm+1}) \cdots \k_{n_{0}}(\eta_{-1})\\
&\leq M^{l} (\d_{0}^{n_{0}})^{Nm} < \d^{Nmn_{0}}
\end{align}
and we are done.

{\bf Case 2.} Suppose $q'=q$ and $\eta_{-Nm} \in \Sigma'$. 
Then $\eta_{-i} \in \Sigma'$ for all $i=0,\dots, Nm$.
Since $\dim \overline{\{\eta_{-i}\}} = q$, $\eta_{-i}$ are generic points of $ \Sigma'$.
Thus 
\begin{align}
\k_{Nmn_{0}}(\eta_{-Nm}) &\leq \k_{mn_{0}}(\eta_{-Nm}) \k_{mn_{0}}(\eta_{-Nm+m}) \cdots \k_{mn_{0}}(\eta_{-m})\\
&\leq (\d_{1}^{mn_{0}})^{N} < \d^{Nmn_{0}}
\end{align}
and we are done.

{\bf Case 3.} Suppose $q'=q$, $\eta_{-Nm} \notin \Sigma'$, and $\eta_{0} \in \Sigma'$.
Let $s \in \{0,\dots, N\}$ be the largest such that $\eta_{-sm} \in \Sigma'$.
Then we have:

\begin{tikzpicture}
    \node (A) at (0,0) {$\eta_{-Nm}$};
    \node (B) at (2,0) {$\cdots$};
    \node (C) at (4,0) {$\eta_{-(s+1)m}$};
    \node (D) at (6,0) {$\cdots$};
    \node (E) at (8,0) {$\eta_{-sm}$};
    \node (F) at (10,0) {$\cdots$};
    \node (G) at (12,0) {$\eta_{0}$};

    \draw[|->] (A) -- (B) node[midway, above] {\scriptsize $f^{n_{0}}$};
    \draw[|->] (B) -- (C) node[midway, above] {\scriptsize $f^{n_{0}}$};
    \draw[|->] (C) -- (D) node[midway, above] {\scriptsize $f^{n_{0}}$};
    \draw[|->] (D) -- (E) node[midway, above] {\scriptsize $f^{n_{0}}$};
    \draw[|->] (E) -- (F) node[midway, above] {\scriptsize $f^{n_{0}}$};
    \draw[|->] (F) -- (G) node[midway, above] {\scriptsize $f^{n_{0}}$};
    
    \draw [decorate,decoration={brace,amplitude=10pt,mirror,raise=5pt}] (A.south west) -- (C.south east) node[midway,below=15pt] {\scriptsize $\notin \Sigma'$};
    \draw [decorate,decoration={brace,amplitude=10pt,mirror,raise=5pt}] (E.south west) -- (G.south east) node[midway,below=15pt] {\scriptsize generic point of $\Sigma'$};
\end{tikzpicture}

Thus
\begin{align}
&\k_{Nmn_{0}}(\eta_{-Nm}) \\
&\leq
 \prod_{n=0}^{Nm - (s+1)m -1}\k_{n_{0}}(\eta_{-Nm + n})
 \prod_{n=Nm - (s+1)m}^{Nm -sm -1}\k_{n_{0}}(\eta_{-Nm+n})
 \prod_{i=0}^{s-1}\k_{mn_{0}}(\eta_{-sm+im})\\
 &\leq 
 M^{l} (\d_{0}^{n_{0}})^{Nm - (s+1)m} M^{m}( \d_{1}^{mn_{0}})^{s}
 \leq M^{l+m} \max\{\d_{0},\d_{1}\}^{Nmn_{0}} < \d^{Nmn_{0}}
\end{align}
and we are done.
\end{proof}
This finishes the proof of \cref{prop:k-induction}.
\end{proof}

\begin{proof}[Proof of \cref{thm:kappanegative}]
Let $\dim X = k$.
Fix arbitrary $\d \in \R_{>1}$.
Let $n_{0} = 1$, $ \Omega = \Omega_{0} := U_{\infty}$, $q = k-1$ in \cref{prop:k-induction}.
Note that 
$\dim \overline{ \{ x \in U_{\infty} \mid \k_{-1}(x) \geq \d \} }\leq k-1$
because $\k_{-1} = 1$ at the generic point of $U_{\infty}$.
Then by \cref{prop:k-induction}, there are 
proper closed subset $ \Sigma_{1} \subsetneq U_{\infty}$ and $n_{1} \in \Z_{\geq1}$ such that
\begin{enumerate}
\item
generic points of  $\Sigma_{1}$ are union of $f$-periodic cycles;
\item
either $ \Sigma_{1} =  \emptyset$, or $ \overline{ \Sigma_{1}}$ is of pure dimension $k-1$;
\item
$\liminf_{n \to \infty} \k_{-n}(x)^{ \frac{1}{n}} \geq \d$ for all $x \in \Sigma_{1}$;
\item
$\dim \overline{ \{ x \in \Omega \setminus \Sigma_{1} \mid \k_{-n_{1}}(x) \geq \d^{n_{1}} \} } \leq k-2$.
\end{enumerate}
Moreover if we set $ \Omega_{1} = \Omega_{0} \setminus \Sigma_{1}$, then 
$\Omega_{1} \subset U_{\infty}$ is non-empty open and
$f^{-1}( \Omega_{1}) \subset \Omega_{1}$.
Therefore we can apply \cref{prop:k-induction} again.
Repeating this process and we get
proper closed subsets 
$ \Sigma_{1}, \dots , \Sigma_{k} \subsetneq U_{\infty}$ 
and $n_{1},\dots, n_{k} \in \Z_{\geq 1}$ with the following properties.
For each $i=1,\dots, k$, we set $ \Omega_{i} = U_{\infty} \setminus \bigcup_{j=1}^{i} \Sigma_{j}$.
Then
\begin{enumerate}
\item
generic points of  $\Sigma_{i}$ are union of $f$-periodic cycles;
\item
either $ \Sigma_{i} =  \emptyset$, or $ \overline{ \Sigma_{i}}$ is of pure dimension $k-i$;
\item
$\liminf_{n \to \infty} \k_{-n}(x)^{ \frac{1}{n}} \geq \d$ for all $x \in \Sigma_{i}$;
\item
$\dim \overline{ \{ x \in \Omega_{i-1} \setminus \Sigma_{i} \mid \k_{-n_{i}}(x) \geq \d^{n_{i}} \} } \leq k-i-1$.
\end{enumerate}
Moreover, we have $f^{-1}( \Omega_{i}) \subset \Omega_{i}$.
In particular, we have $\k_{-n_{k}} < \d^{n_{k}}$ on $ \Omega_{k}$.
Since $\k_{-n_{k}}$ is upper semicontinuous and $\Omega_{k}$ is Noetherian,
there is $\d_{0} < \d$ such that $\k_{-n_{k}} < \d_{0}^{n_{k}}$ on $ \Omega_{k}$.
Then for any $x \in \Omega_{k}$ and $n \geq 1$, we have
\begin{align}
\k_{-nn_{k}}(x) &= \max_{y \in f^{-nn_{k}}(x)} \k_{nn_{k}}(y) 
\leq \max_{y \in f^{-nn_{k}}(x)}  \prod_{i=0}^{n-1} \k_{n_{k}}(f^{n_{k}i}(y)) \\
&\leq  \max_{y \in f^{-nn_{k}}(x)}  \prod_{i=0}^{n-1} \k_{-n_{k}}(f^{n_{k}(i+1)}(y)) < (\d_{0}^{n_{k}})^{n}.
\end{align}
By \cref{lem:limsupinf-divisible}, we have
\begin{align}
\limsup_{n \to \infty} \k_{-n}(x)^{ \frac{1}{n}} = \limsup_{n \to \infty} \k_{-nn_{k}}(x)^{ \frac{1}{nn_{k}}} \leq \d_{0} < \d
\end{align}
for all $x \in \Omega_{k}$.

Now we take arbitrary $x \in U_{\infty}$.
Then define $\k_{-}(x) = \liminf_{n \to \infty} \k_{-n}(x)^{ \frac{1}{n}}$.
Apply the above argument to any $\d > \k_{-}(x)$.
Then we get $x \in \Omega_{k}$ and thus $\limsup_{n \to \infty} \k_{-n}(x)^{ \frac{1}{n}} < \d$.
Therefore $\lim_{n \to \infty}\k_{-n}(x)^{ \frac{1}{n}} $ exists and is equal to $\k_{-}(x)$.

Finally for arbitrary $\d \in \R_{>1}$, by the above argument again, we have
\begin{align}
\{ x \in U_{\infty} \mid \k_{-}(x) \geq \d \} = \Sigma_{1} \cup \cdots \cup \Sigma_{k}
\end{align}
and we are done.
\end{proof}

\begin{remark}
By the proof, for any $\d \in \R$, 
the generic points of the closed set
\begin{align}
\{x \in U_{\infty} \mid \k_{-}(x) \geq \d\}
\end{align}
are finite union of $f$-periodic cycles.
\end{remark}

\section{Multiplicities}\label{sec:mult}
Let $K$ be a field of characteristic zero.  

\begin{definition}
 Let $f \colon X \longrightarrow Y$ be a quasi-finite morphism of algebraic schemes over $K$.
 For a scheme point $x \in X$, we define 
 \begin{align}
 e_{f}(x) := l_{\O_{X,x}}(\O_{X,x} / f^{*}\m_{f(x)}\O_{X,x}).
 \end{align}
 By definition we have $ e_{f}(x) \geq 1$. If $f$ is unramified at $x$, then $ e_{f}(x)=1$. 
\end{definition}

\begin{lemma}\label{lem:uscofef}
Let $f \colon X \longrightarrow Y$ be a quasi-finite morphism of algebraic schemes over $K$.
Then the function
\begin{align}
e_{f} \colon X \longrightarrow \R
\end{align}
is upper semicontinuous.
\end{lemma}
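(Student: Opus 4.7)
The plan is to reduce via Zariski's Main Theorem to the case where $f$ is finite, and then to combine the standard upper semicontinuity of fibre ranks of coherent sheaves with a local isolation argument and a comparison of residue-field degrees.

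Since $e_f(x)$ depends only on the local homomorphism $\O_{Y,f(x)} \to \O_{X,x}$, the claim is local on $X$. By Zariski's Main Theorem, around any $x_0 \in X$, after shrinking $X$, the separated quasi-finite morphism $f$ factors as an open immersion $j\colon X \hookrightarrow \widetilde X$ followed by a finite morphism $\widetilde f\colon \widetilde X \to Y$, and $e_f = e_{\widetilde f} \circ j$ on this neighbourhood; so I may assume $f$ is finite. With $f$ finite, the coherent $\O_Y$-module $\mathcal{F} := f_*\O_X$ satisfies
\[
\dim_{k(y)}\mathcal{F}(y) \;=\; \sum_{x \in f^{-1}(y)} [k(x):k(y)] \cdot e_f(x),
\]
and this function of $y$ is upper semicontinuous on $Y$ by the standard semicontinuity of fibre ranks of a coherent sheaf.

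Given a specialization $x_0 \leadsto x_1$ in $X$ with $y_i := f(x_i)$, I would shrink $Y$ and $X$ (using that $f$ is closed with finite fibres, so that the image of $f^{-1}(y_1)\setminus\{x_1\}$ can be avoided) so as to arrange $f^{-1}(y_1) = \{x_1\}$ while keeping $f$ finite. The semicontinuity applied at $y_1$ and $y_0$ then yields
\[
[k(x_1):k(y_1)]\, e_f(x_1) \;=\; \dim_{k(y_1)}\mathcal{F}(y_1) \;\ge\; \dim_{k(y_0)}\mathcal{F}(y_0) \;\ge\; [k(x_0):k(y_0)]\, e_f(x_0).
\]

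The hard part will be to extract $e_f(x_0) \le e_f(x_1)$ from this, which reduces to the residue-field-degree inequality $[k(x_1):k(y_1)] \le [k(x_0):k(y_0)]$ along the specialization. I would derive this from the classical fact that, for a finite extension of Noetherian local integral domains, the residue-field degree is bounded above by the fraction-field degree (proved in the DVR case by the standard $\sum e_i f_i = n$ formula and reduced to the DVR case via a dominating discrete valuation). Applied to the local rings $\O_{\overline{\{y_0\}},y_1} \subset \O_{\overline{\{x_0\}},x_1}$, with the reduced induced structures on the closures so that the restriction of $f$ gives a finite dominant morphism of integral schemes with generic-fibre degree $[k(x_0):k(y_0)]$, this yields the required residue-degree bound, and hence $e_f(x_1) \ge e_f(x_0)$, completing the proof of upper semicontinuity.
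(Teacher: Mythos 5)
Your reduction to the finite case via Zariski's Main Theorem, the identification
\begin{align}
\dim_{k(y)}\mathcal{F}(y) \;=\; \sum_{x \in f^{-1}(y)} [k(x):k(y)] \cdot e_f(x),
\end{align}
and the isolation trick giving
\begin{align}
[k(x_1):k(y_1)]\, e_f(x_1) \;\ge\; [k(x_0):k(y_0)]\, e_f(x_0)
\end{align}
are all fine. The gap is exactly where you flag it: the residue--field--degree inequality $[k(x_1):k(y_1)] \le [k(x_0):k(y_0)]$ is \emph{not} a theorem for finite extensions of Noetherian local domains, and the proposed reduction to the DVR case does not go through. The DVR case holds because $B$ is free over $A$ of rank $n=[\Frac B:\Frac A]$, which forces $\dim_{k_A} B/\m_A B = n$; once $A$ is not a DVR (or even just not normal), freeness fails, the fiber dimension $\dim_{k_A} B/\m_A B$ can jump above $n$, and the residue field $k_B$ (a quotient of $B/\m_A B$) can be larger than $n$-dimensional. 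Moreover, the proposed passage to a dominating discrete valuation does not repair this: a DVR $V$ dominating $A$ typically has residue field strictly larger than $k_A$, and there is no reason for $k_B$ and $k_V$ to be linearly disjoint over $k_A$ inside the residue field of the extended valuation, so the DVR bound on $[k_W:k_V]$ cannot be transferred down to $[k_B:k_A]$.

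Here is a concrete counterexample in your exact setting (finite type over a field, finite dominant morphism of integral schemes). Let $A = \Q[x, ix] \subset B = \Q(i)[x] = \Q[x,t]/(t^2+1)$, and localize at the maximal ideal $\m = (x, ix) \subset A$ and the unique prime $\mathfrak{q} = (x) \subset B$ lying over it. Then $A_\m \hookrightarrow B_\mathfrak{q}$ is a finite local extension of one-dimensional Noetherian local domains with $\Frac A_\m = \Frac B_\mathfrak{q} = \Q(i)(x)$, so the fraction-field degree is $1$; but the residue fields are $\Q$ and $\Q(i)$, so $[k(B_\mathfrak{q}):k(A_\m)] = 2 > 1$. (Geometrically, $\Spec A$ is $\A^1_{\Q(i)}$ with the $\Q(i)$-rational point at the origin pinched to a $\Q$-rational point; the normalization is $B$, and the residue field grows under normalization.) This is exactly the kind of situation your argument would have to handle when $\overline{\{y_0\}}$ is not normal, and it shows the inequality you need can fail even though (one can check directly) $e_f$ is still upper semicontinuous there — so the conclusion survives but your chain of inequalities does not yield it. The paper sidesteps all of this by invoking a semicontinuity result for colengths of ideals with uniformly bounded order (Lejeune--Jalabert and Teissier, Corollaire~4.8), applied to the ideals $f^*\m_{f(x)}\O_{X,x}$, which compares lengths over $\O_{X,x}$ directly and never introduces the residue-degree factors that cause the trouble in your approach.
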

\begin{proof}
This follows from, for example,
\cite[Corollary 4.8]{lejeune1974normal}
and the fact that there is a uniform $\nu \geq 1$ such that
$\m_{x}^{\nu} \O_{X,x} / f^{*}\m_{f(x)}\O_{X,x} =0$.
\end{proof}

\begin{lemma}\label{lem:submulef}
Let $f \colon X \longrightarrow Y$, $g \colon Y \longrightarrow Z$ be quasi-finite morphisms of algebraic schemes over $K$.
Then for any $x \in X$, we have
\begin{align}
e_{g\circ f}(x) \leq e_{f}(x) e_{g}(f(x)).
\end{align}
If $f$ is flat, the equality holds.
\end{lemma}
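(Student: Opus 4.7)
The plan is to work entirely at the level of stalks. Write $y = f(x)$, $z = g(y)$, $A = \mathcal{O}_{X,x}$, $B = \mathcal{O}_{Y,y}$, and let $\mathfrak{m}_B$, $\mathfrak{m}_C$ denote the maximal ideals of $B$ and of $\mathcal{O}_{Z,z}$. The lemma becomes the purely commutative-algebraic comparison
\begin{align}
l_A(A/\mathfrak{m}_C A) \leq l_A(A/\mathfrak{m}_B A) \cdot l_B(B/\mathfrak{m}_C B),
\end{align}
with equality when $A$ is flat over $B$, so the entire argument reduces to a filtration game.

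The key observation is that quasi-finiteness of $g$ forces $B/\mathfrak{m}_C B$ to be Artinian, so one can choose a composition series of $B$-submodules
\begin{align}
\mathfrak{m}_C B = N_0 \subset N_1 \subset \cdots \subset N_n = B
\end{align}
with $N_i/N_{i-1} \cong B/\mathfrak{m}_B$ and $n = l_B(B/\mathfrak{m}_C B) = e_g(y)$. Extending scalars to $A$ yields a filtration $\mathfrak{m}_C A = N_0 A \subset N_1 A \subset \cdots \subset N_n A = A$, and each graded piece $N_i A / N_{i-1} A$ is a canonical quotient of $(N_i/N_{i-1}) \otimes_B A \cong A/\mathfrak{m}_B A$. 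Summing $A$-lengths gives
\begin{align}
l_A(A/\mathfrak{m}_C A) = \sum_{i=1}^n l_A(N_i A/N_{i-1} A) \leq n \cdot l_A(A/\mathfrak{m}_B A) = e_g(y) \cdot e_f(x),
\end{align}
which is the desired submultiplicativity.

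For the flat case, $-\otimes_B A$ is exact, so each inclusion $N_{i-1} \hookrightarrow N_i$ remains an inclusion after tensoring and identifies with $N_{i-1} A \hookrightarrow N_i A$; hence every graded piece is genuinely $A/\mathfrak{m}_B A$ and every inequality above becomes equality. The only delicate point is the generally only right-exact base change $B \to A$, which is why $N_i \otimes_B A \to A$ need not be injective without flatness, and this is exactly what produces inequality rather than equality in the general setting; no substantial obstacle is expected.
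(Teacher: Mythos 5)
Your argument is correct and follows essentially the same strategy as the paper: both reduce to the stalk-level length inequality for a chain of local homomorphisms and prove it by lifting a composition series of $B/\mathfrak{m}_C B$ and exploiting right-exactness of $-\otimes_B A$, which upgrades to exactness when $f$ is flat. Your organization is marginally tighter (you filter from $\mathfrak{m}_C B$ to $B$ in one go, whereas the paper first peels off the top factor $A/\mathfrak{m}_B A$ by a short exact sequence and then runs the composition-series argument on $\mathfrak{m}_B/\mathfrak{m}_C B$), but the underlying mechanism is identical.
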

\begin{proof}
This follows from the following.
\begin{claim}\label{claim:chainrule}
Let $(A,\m_{A}) \longrightarrow (B,\m_{B}) \longrightarrow (C, \m_{C})$ be local homomorphisms between Noetherian local rings such that
$\m_{A}B$ is $\m_{B}$-primary and $\m_{B}C$ is $\m_{C}$ primary.
Then
\[ 
l_{C}(C/\m_{A}C) \leq l_{C}(C/\m_{B}C) l_{B}(B/\m_{A}B).
\]
If $B \longrightarrow C$ is flat, then
\[ 
l_{C}(C/\m_{A}C) = l_{C}(C/\m_{B}C) l_{B}(B/\m_{A}B).
\]
\end{claim}
\begin{proof}
Since $\m_{A}B$ is $\m_{B}$-primary and $\m_{B}C$ is $\m_{C}$-primary, 
$l_{B}(B/\m_{A}B)$ and $l_{C}(C/\m_{B}C)$ are finite.
By the exact sequence of $C$-modules
\[
0 \longrightarrow \m_{B}C/\m_{A}C \longrightarrow C/\m_{A}C \longrightarrow C/\m_{B}C \longrightarrow0 
\]
we have 
\[
l_{C}(C/\m_{A}C) = l_{C}(\m_{B}C/\m_{A}C) + l_{C}(C/\m_{B}C).
\]
Next consider the commutative digram
\begin{equation}
\begin{tikzcd}
&\m_{A}B {\otimes}_{B}C  \arrow[r] \arrow[d, ->>,"a"] & \m_{B} {\otimes}_{B} C \arrow[r] \arrow[d, ->>,"b"]& (\m_{B}/\m_{A}B) {\otimes}_{B}C \arrow[r] \arrow[d, ->>,"c"]& 0\\
0 \arrow[r] & \m_{A}C \arrow[r]& \m_{B}C \arrow[r] & \m_{B}C / \m_{A}C  \arrow[r] & 0 
\end{tikzcd}
\end{equation} 
where the rows are exact.
Note that $a,b,c$ are always surjective. If $B \longrightarrow C$ is flat, then $a,b$ are injective and hence by snake lemma
$c$ is also injective.
Thus 
\begin{align}
l_{C}(\m_{B}C / \m_{A}C) \leq l_{C}((\m_{B}/\m_{A}B) {\otimes}_{B}C)
\end{align}
and equality holds when $B \longrightarrow C$ is flat.
We claim that 
\begin{align}
l_{C}((\m_{B}/\m_{A}B) {\otimes}_{B}C) \leq l_{C}(C/\m_{B}C) l_{B}(\m_{B}/\m_{A}B)
\end{align}
and equality holds when $B \longrightarrow C$ is flat.
Indeed, let $M = \m_{B}/\m_{A}B$.
Let $M=M_{0} \supset M_{1} \supset \cdots \supset M_{l} =0$ be a composition sequence as $B$-module.
Then $M_{i}/M_{i+1} \simeq B/\m_{B}$ as $B$-modules.
Let us write $N = M {\otimes}_{B}C$.
Let $N_{i}$ be the image of $M_{i} {\otimes}_{B}C \longrightarrow N$.
Then $N=N_{0} \supset N_{1} \supset \cdots \supset N_{l} =0$ and thus
\begin{align}
l_{C}(N) = \sum_{i=0}^{l-1} l_{C}(N_{i}/N_{i+1}).
\end{align}
We have the following commutative diagram
\begin{equation}
\begin{tikzcd}
&M_{i+1} {\otimes}_{B}C  \arrow[r] \arrow[d, ->>,"a'"] & M_{i} {\otimes}_{B} C \arrow[r] \arrow[d, ->>,"b'"]& (M_{i}/M_{i+1}) {\otimes}_{B}C \arrow[r] \arrow[d, ->>,"c'"]& 0\\
0 \arrow[r] & N_{i+1} \arrow[r]& N_{i} \arrow[r] & N_{i}/N_{i+1} \arrow[r] & 0 
\end{tikzcd}
\end{equation} 
where the rows are exact.
Note that when $B \longrightarrow C$ is flat, $a', b'$ are injective and hence so is $c'$.
Thus 
\begin{align}
l_{C}(N_{i}/N_{i+1}) \leq l_{C}((M_{i}/M_{i+1}) {\otimes}_{B}C) = l_{C}((B/\m_{B}) {\otimes}_{B}C) = l_{C}(C/\m_{B}C)
\end{align}
with equality being true when $B \longrightarrow C$ is flat.
Thus we have proven
\begin{align}
l_{C}(N) \leq l_{B}(M) l_{C}(C/\m_{B}C)
\end{align}
and equality holds when $B \longrightarrow C$ is flat.

Therefore 
\begin{align}
l_{C}(C/\m_{A}C) \leq l_{C}(C/\m_{B}C) (l_{B}(\m_{B}/\m_{A}B) + 1) = l_{C}(C/\m_{B}C) l_{B}(B/\m_{A}B)
\end{align}
and equality holds when $B \longrightarrow C$ is flat.
\end{proof}

Applying \cref{lem:submulef} to the local homomorphisms
$\O_{Z, g(f(x))} \longrightarrow \O_{Y, f(x)} \longrightarrow \O_{X, x}$,
we are done.
\end{proof}

\begin{lemma}\label{lem:fldextandef}
Let $f \colon X \longrightarrow Y$ be a quasi-finite morphism of algebraic schemes over $K$.
Let $K \subset L$ be an algebraic extension.
Let $x' \in X_{L}$ and let $x$ be its projection to $X$.
Then
\begin{align}
e_{f_{L}}(x') = e_{f}(x)
\end{align}
\end{lemma}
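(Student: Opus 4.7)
My plan is to show that the base change along $K \subset L$ is flat and ``unramified on maximal ideals'', then invoke the flat case of \cref{claim:chainrule}. First I would reduce to the case where $L/K$ is finite: writing $L = \varinjlim_i L_i$ as the filtered colimit of its finite subextensions $L_i/K$, the point $x'$ descends to a point $x'_i \in X_{L_i}$ for $i$ sufficiently large, and both $\O_{X_L, x'}$ and $\m_{x'}$ are obtained as filtered colimits of the analogous objects at each finite stage, so $e_{f_{L_i}}(x'_i)$ stabilizes to $e_{f_L}(x')$ as $i$ grows.

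Now assume $L/K$ is finite; since $K$ has characteristic zero, $L/K$ is separable, hence for any field $F$ containing $K$ the tensor product $F \otimes_K L$ is a finite product of fields. Realizing $\O_{X_L, x'}$ affine-locally as a localization of $\O_{X,x} \otimes_K L$, this yields two key properties: (i) the map $\O_{X,x} \to \O_{X_L, x'}$ is flat (localization of the $K$-flat base change), and similarly $\O_{Y,y} \to \O_{Y_L, y'}$ is flat; (ii) $\m_x \O_{X_L, x'} = \m_{x'}$ and $\m_y \O_{Y_L, y'} = \m_{y'}$, because $(\O_{X,x}/\m_x)\otimes_K L = \kappa(x)\otimes_K L$ is a product of fields whose factors correspond bijectively to the points of the fiber $X_L \times_X \Spec \kappa(x)$, each factor being the residue field of the corresponding point.

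Using (ii), one has $f_L^* \m_{y'} \cdot \O_{X_L, x'} = f^*\m_y \cdot \O_{X_L, x'}$, the ideal generated by the image of $f^*\m_y \O_{X,x}$ along the flat map $\O_{X,x} \to \O_{X_L, x'}$. Applying the equality version of \cref{claim:chainrule} to the chain $\O_{Y,y} \xrightarrow{f^*} \O_{X,x} \to \O_{X_L, x'}$ (the second arrow flat by (i); the composite is quasi-finite since $\m_y\O_{X,x}$ is $\m_x$-primary and $\m_x\O_{X_L,x'}=\m_{x'}$), one computes
\begin{align*}
e_{f_L}(x') &= l_{\O_{X_L, x'}}\bigl(\O_{X_L, x'}/f^*\m_y \O_{X_L, x'}\bigr)\\
&= l_{\O_{X,x}}\bigl(\O_{X,x}/f^*\m_y\O_{X,x}\bigr)\cdot l_{\O_{X_L,x'}}\bigl(\O_{X_L,x'}/\m_x\O_{X_L,x'}\bigr) = e_f(x)\cdot 1.
\end{align*}
The only substantive input is property (ii), which is where characteristic zero (equivalently, separability of algebraic extensions) enters; everything else is formal bookkeeping via the chain rule.
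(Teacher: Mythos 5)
Your argument is correct and reproduces the paper's proof in all essentials: establish that $p\colon X_L\to X$ and $q\colon Y_L\to Y$ are flat and that $\m_x\O_{X_L,x'}=\m_{x'}$, $\m_y\O_{Y_L,y'}=\m_{y'}$ (both consequences of $L/K$ being algebraic and, in characteristic zero, separable), then feed the chain $\O_{Y,y}\to\O_{X,x}\to\O_{X_L,x'}$ into the flat equality case of \cref{claim:chainrule}. The one place you deviate is the preliminary reduction to $L/K$ finite via a filtered-colimit argument, made so that $\kappa(x)\otimes_K L$ is literally a \emph{finite} product of fields; this step is harmless but unnecessary, since the paper's phrasing (the fiber $q^{-1}(y)$ is zero-dimensional because $L/K$ is algebraic and reduced because $L/K$ is separable, hence its local rings are fields) works directly for arbitrary algebraic $L/K$ without passing to finite subextensions.
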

\begin{proof}
Let us consider the following diagram.
\begin{equation}
\begin{tikzcd}
X_{L} \arrow[r,"f_{L}"] \arrow[d, "p", swap] & Y_{L} \arrow[d, "q"] \\
X \arrow[r,"f",swap] & Y
\end{tikzcd}
\end{equation} 
Let $y'=f_{L}(x')$ and $y = f(x) = q(y')$.
Since $K \subset L$ is algebraic extension, $\dim q^{-1}(y)=0$.
Since $K \subset L$ is separable extension, $q^{-1}(y)$ is reduced.
Thus we have 
\begin{align}
\m_{Y, y} \O_{Y_{L}, y'}  = \m_{Y_{L}, y'}.
\end{align}
By the same argument, we also have
\begin{align}
\m_{X, x} \O_{X_{L}, x'}  = \m_{X_{L}, x'}.
\end{align}
Therefore, 
\begin{align}
e_{f_{L}}(x') &= l_{\O_{X_{L}, x'}} (\O_{X_{L},x'} / \m_{Y_{L},y'}\O_{X_{L},x'})\\
& =  l_{\O_{X_{L}, x'}} (\O_{X_{L},x'} / \m_{Y,y}\O_{X_{L},x'})\\
& = l_{\O_{X_{L}, x'}} (\O_{X_{L},x'} / \m_{X,x}\O_{X_{L},x'}) l_{\O_{X,x}}(\O_{X,x} / \m_{Y,y}\O_{X,x})\\
&=e_{f}(x)
\end{align}
where for the third equality, we use \cref{claim:chainrule} and the flatness of $p$.
\end{proof}

Now let $X$ be a geometrically integral projective variety over $K$.
Let $f \colon X \dashrightarrow X$ be a dominant rational map.
Let $U_{n}, U_{\infty} = X_{f}^{\rm back}$ be as in \cref{section:uscbackconvergence}.
Then the functions 
\begin{align}
e_{f^{n}} \colon f^{-n}(U_{n}) \longrightarrow \R, \quad n \geq 0
\end{align}
form submultiplicative cocycle by \cref{lem:uscofef,lem:submulef}.
Therefore by \cref{thm:kappanegative}, we have
\begin{theorem}
In the above notation, the limit
\begin{align}
\lim_{n \to \infty} \max_{y \in f^{-n}(x)}e_{f^{n}}(x)^{ \frac{1}{n}} =: e_{f, -}(x)
\end{align}
exists for all $x \in X_{f}^{\rm back}$.
Moreover, the function
\begin{align}
e_{f,-} \colon X_{f}^{\rm back} \longrightarrow \R, x \mapsto e_{f,-}(x)
\end{align}
is upper semicontinuous and $e_{f,-}=1$ at the generic point of $X_{f}^{\rm back}$.
\end{theorem}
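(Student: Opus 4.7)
The plan is to invoke \cref{thm:kappanegative} directly, with the sequence $\k_n := e_{f^n}$ playing the role of the submultiplicative cocycle. So the entire task reduces to verifying that these multiplicity functions satisfy the three axioms required by that theorem, and then extracting the value at the generic point by a separate elementary argument.

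First I would check the upper semicontinuity of each $\k_n \colon f^{-n}(U_n) \longrightarrow [1,\infty)$. Since $f^n$ is quasi-finite on $f^{-n}(U_n) \longrightarrow U_n$ by construction of $U_n$ in \cref{subsec:finlociratmap}, \cref{lem:uscofef} applies verbatim and gives upper semicontinuity. Next, for submultiplicativity, write $f^{m+n} = f^m \circ f^n$ on $f^{-(m+n)}(U_{m+n})$ and note that along this locus both $f^n$ and $f^m$ are quasi-finite (again by the construction of the $U_n$). Then \cref{lem:submulef} yields exactly the inequality $\k_{m+n}(x) \leq \k_n(x)\, \k_m(f^n(x))$. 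Finally, to see $\min \k_n = 1$, it is enough to exhibit one point where $e_{f^n}$ equals $1$; the generic point $\eta$ of $X$ works, because we are in characteristic zero, so the dominant rational self-map $f^n$ induces a separable, hence generically \'etale, extension $K(X) \hookrightarrow K(X)$, which forces $e_{f^n}(y) = 1$ at every generic point $y \in f^{-n}(\eta)$.

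With the cocycle axioms verified, \cref{thm:kappanegative} immediately produces the limit $e_{f,-}(x) = \lim_{n\to\infty} \k_{-n}(x)^{1/n}$ for all $x \in X_f^{\rm back}$, together with the upper semicontinuity of this limit function on $X_f^{\rm back}$. For the last claim, that $e_{f,-} = 1$ at the generic point $\eta$ of $X_f^{\rm back}$, the same generic \'etaleness observation used above shows that every $y \in f^{-n}(\eta)$ satisfies $e_{f^n}(y) = 1$, so $\k_{-n}(\eta) = 1$ for every $n$ and thus $e_{f,-}(\eta) = 1$. There is no real obstacle here: all of the analytic content lives inside \cref{thm:kappanegative}, and the only thing one needs beyond the formal cocycle check is the characteristic-zero separability argument at the generic point.
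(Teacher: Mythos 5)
Your proposal is correct and follows the same route as the paper: verify that $e_{f^n}$ on $f^{-n}(U_n)$ forms a submultiplicative cocycle via \cref{lem:uscofef} and \cref{lem:submulef}, then invoke \cref{thm:kappanegative}. The only minor remark is that the separability/generic-\'etaleness argument is not strictly needed for $e_{f^n}(\eta)=1$ at the generic point $\eta$: since $\mathcal{O}_{X,\eta}=K(X)$ is a field, $\mathfrak{m}_{f^n(\eta)}=0$ and the length is automatically $1$, so this part of the cocycle verification holds in any characteristic.
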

By the definition and the convergence, we have
\begin{align}
e_{f^{m}, - }(x) = e_{f, -}(x)^{m}
\end{align}
for $x \in X_{f}^{\rm back}( \subset X_{f^{m}}^{\rm back})$ and $m \geq 1$.

For any finite subset $Y \subset X_{f}^{\rm back}$, we define 
\begin{align}
e(f; Y) := \lim_{n \to \infty} \big(  \max_{y \in f^{-n}(Y)} e_{f^{n}}(y) \big)^{ \frac{1}{n}}.
\end{align}
The limit in the right hand side exists since it is maximum of finitely many convergent sequences:
\begin{align}
\lim_{n \to \infty} \big(  \max_{y \in f^{-n}(Y)} e_{f^{n}}(y) \big)^{ \frac{1}{n}} = \max_{x \in Y} e_{f,-}(x).
\end{align}
By this equality, for any $m \geq 1$, we have
\begin{align}
e(f^{m}, Y) = e(f,Y)^{m}.
\end{align}

Let $K \subset L$ be an algebraic extension.
Let $\pi \colon X_{L} \longrightarrow X$ be the projection.
Then for any $n \geq 0$, we have
\begin{equation}
\begin{tikzcd}
f_{L}^{-n}(\pi^{-1}(U_{n})) \arrow[r, "f_{L}^{n}"] \arrow[d,"\pi",swap] & \pi^{-1}(U_{n}) \arrow[d, "\pi"]\\
f^{-n}(U_{n}) \arrow[r, "f^{n}"] & U_{n}
\end{tikzcd}
\end{equation} 
where the vertical arrows are surjective.
By this, we have $\pi^{-1}(X_{f}^{\rm back}) \subset (X_{L})_{f_{L}}^{\rm back}$.
Moreover, by \cref{lem:fldextandef}, we have
\begin{align}
e(f_{L};\pi^{-1}(Y)) &= \lim_{n \to \infty} \big( \max_{y' \in f_{L}^{-n}(\pi^{-1}(Y))} e_{f_{L}^{n}}(y') \big)^{ \frac{1}{n}}\\
&=\lim_{n \to \infty} \big( \max_{y \in f^{-n}(Y)} e_{f^{n}}(y) \big)^{ \frac{1}{n}} = e(f;Y).
\end{align}

\bibliographystyle{acm}
\bibliography{arithmetic_degree}

\end{document}